\newtheorem{prop}[equation]{Proposition}
\newtheorem{thm}[equation]{Theorem}
\newtheorem{cor}[equation]{Corollary}
\newtheorem{lem}[equation]{Lemma}
\theoremstyle{definition}
\newtheorem{defns}[equation]{Definitions}
\newtheorem{rem}[equation]{Remark}
\newtheorem{rems}[equation]{Remarks}
\newtheorem{exa}[equation]{Example}
\theoremstyle{remark}
\numberwithin{equation}{section}
\newcommand{\spandsp}{\mbox{$\qquad\text{and}\qquad$}}
\newcommand{\sands}{\mbox{$\quad\text{and}\quad$}}
\newcommand{\sts}[1]{\mbox{$\quad\text{#1}\quad$}}
\newcommand{\Ker}{\operatorname{Ker}}
\newcommand{\Ima}{\operatorname{Im}}
\newcommand{\Cok}{\operatorname{Cok}}
\newcommand{\Tor}{\operatorname{Tor}}
\newcommand{\letbe}{\mathbin{\raisebox{.32pt}{:}\!=}}
\newcommand{\br}[1]{\mbox{$\langle #1\rangle$}}
\newcommand{\?}{\mskip-2mu}
\newcommand{\+}{\mskip-.5mu}
\newcommand{\0}{\mskip-.8mu}
\newcommand{\1}{\mskip-1mu}
\newcommand{\2}{\mskip-1.5mu}
\newcommand{\om}{\mskip1mu}
\newcommand{\pfm}{\mskip.5mu}
\newcommand{\opfm}{\mskip1.5mu}
\newcommand{\scirc}{\mathbin{\1\raisebox{1.3pt}{$\scriptscriptstyle \circ$}}}
\newcommand{\idehat}[1]{\skew{6}\widehat{\rule{0ex}{1.4ex}\smash{#1}}}
\newcommand{\cdt}{\raisebox{2.2pt}{$\opfm.\opfm$}}
\newcommand{\zmt}{\mbox{$\bZ\1/\+2$}}
\newcommand{\KP}{\mbox{$\bK P$}}
\newcommand{\RP}{\mbox{$\bR P$}}
\newcommand{\RPT}{\mbox{$\bR P^2$}}
\newcommand{\RPn}{\mbox{$\bR P^n$}}
\newcommand{\KPn}{\mbox{$\bK P^n$}}
\newcommand{\RPI}{\mbox{$\bR P^\infty$}}
\newcommand{\RPIssp}{\mbox{$\bR P^\infty_\ssp$}}
\newcommand{\KPI}{\mbox{$\bK P^\infty$}}
\newcommand{\CPT}{\mbox{$\bC P^2$}}
\newcommand{\CPH}{\mbox{$\bC P^3$}}
\newcommand{\CPn}{\mbox{$\bC P^n$}}
\newcommand{\CPI}{\mbox{$\bC P^\infty$}}
\newcommand{\HPT}{\mbox{$\bH P^2$}}
\newcommand{\HPH}{\mbox{$\bH P^3$}}
\newcommand{\HPn}{\mbox{$\bH P^n$}}
\newcommand{\HPI}{\mbox{$\bH P^\infty$}}
\newcommand{\OPT}{\mbox{$\bO P^2$}}
\newcommand{\ssp}{{\scriptscriptstyle{+}}}
\newcommand{\GL}{\mbox{\it GL\/}}
\newcommand{\BSp}{\mbox{$B\1 Sp$}}
\newcommand{\Spin}{\mbox{\it Spin}}
\newcommand{\Pin}{\mbox{\it Pin}}
\newcommand{\Pind}{\mbox{\it Pin}$^\dagger$}
\newcommand{\Pindd}{\mbox{$\Pin^{\,\ddagger\0}(d)$}}
\newcommand{\Spindd}{\mbox{$\Spin^{\,\ddagger\0}(d)$}}
\newcommand{\MPd}{\mbox{$M\?P^d$}}
\newcommand{\BO}{\mbox{$B\1 O$}}
\newcommand{\bC}{\mathbb{C}}
\newcommand{\bH}{\mathbb{H}}
\newcommand{\bK}{\mathbb{K}}
\newcommand{\bO}{\mathbb{O}}
\newcommand{\bR}{\mathbb{R}}
\newcommand{\bZ}{\mathbb{Z}}
\newcommand{\opD}{\mbox{$D_{\+ o}$}}
\newcommand{\opd}[1]{\mbox{$D^{#1}_{\+ o}$}}
\newcommand{\srf}[1]{\mbox{${\text{\it SR}^F}$}}
\newcommand{\Th}{\mbox{\it Th}\opfm}
\newcommand{\spt}{\mbox{$S\0 P^2$}}
\newcommand{\sptssp}{\mbox{$S\0 P^2_{\?\ssp}$}}
\newcommand{\sptx}{\mbox{$\spt(X)$}}
\newcommand{\sptn}{\mbox{$S\0 P^2_n$}}
\newcommand{\sptsn}{\mbox{$\spt(S^n)$}}
\newcommand{\sptKPn}{\mbox{$SP^2(\bK P^n)$}}
\newcommand{\BFd}{\mbox{$B\0 F^d$}}
\newcommand{\BPd}{\mbox{$B\0 P^d$}}
\newcommand{\BFdssp}{\mbox{$B\0 F^d_{\ssp}$}}
\newcommand{\BPdssp}{\mbox{$B\0 P^d_{\ssp}$}}
\newcommand{\RKI}{\mbox{$RK^\infty$}}
\newcommand{\RKIssp}{\mbox{$RK^\infty_\ssp$}}
\newcommand{\RKn}{\mbox{$RK^n$}}
\newcommand{\fssp}{\mbox{$f^{\scriptscriptstyle +}$}}
\newcommand{\twa}[2]
{\begin{picture}(35.5,7)
\put(14.5,-7){$\scriptstyle#1$}
\put(3,-2){$\llla\joinrel\hspace{-1pt}\relbar$}
\put(3,2.5){$\relbar\joinrel\hspace{-1pt}\llra$}
\put(14.5,8){$\scriptstyle#2$}
\end{picture}}
\newcommand{\ra}{\rightarrow}
\newcommand{\lra}{\longrightarrow}
\newcommand{\llra}{\relbar\joinrel\hspace{-2pt}\lra}
\newcommand{\lla}{\longleftarrow}
\newcommand{\llla}{\lla\joinrel\hspace{-2pt}\relbar}
\makeatletter \newcommand{\Ddots}{\mathinner{\mkern1mu\raise\p@
\vbox{\kern7\p@\hbox{.}}\mkern2mu
\raise4\p@\hbox{.}\mkern2mu\raise7\p@\hbox{.}\mkern1mu}}
\begin{document}
\bibliographystyle{plain}

\title[Symmetric squares of projective space]
{On the symmetric squares of\\ complex and quaternionic projective space}

\address{School of Mathematics, University of Manchester, Oxford
Road, Manchester M13~9PL, England}
\author{Yumi Boote}
\email{yumi.boote@manchester.ac.uk}
{\thanks{The work of the first author was supported by EPSRC
grant EP/P505631/1}}
\author{Nigel Ray}
\email{nigel.ray@manchester.ac.uk}

\keywords {Borel construction, configuration space, integral 
cohomology ring, \emph{Pin} group, projective space, symmetric square}

\subjclass[2010]{Primary 57R18; secondary 14M25, 55N22}

\begin{abstract}
The problem of computing the integral cohomology ring of the symmetric
square of a topological space has been of interest since the 1930s,
but limited progress has been made on the general case until
recently. In this work we offer a solution for the complex and
quaternionic projective spaces $\mathbb{K}P^n$, by taking advantage of
their rich geometrical structure. Our description is in terms of
generators and relations, and our methods entail ideas that have
appeared in the literature of quantum chemistry, theoretical physics,
and combinatorics. We deal first with the case $\mathbb{K}P^\infty$,
and proceed by identifying the truncation required for passage to
finite $n$. The calculations rely upon a ladder of long exact
cohomology sequences, which arises by comparing cofibrations
associated to the diagonals of the symmetric square and the
corresponding Borel construction. These involve classic configuration 
spaces of unordered pairs of points in $\mathbb{K}P^n$, and their 
one-point compactifications; the latter are identified as Thom spaces by 
combining L\"owdin's symmetric orthogonalisation process (and its 
quaternionic analogue) with a dash of \emph{Pin} geometry. The 
ensuing cohomology rings may be conveniently expressed using 
generalised Fibonacci polynomials. We note that our conclusions are 
compatible with mod $2$ computations of Nakaoka and homological 
results of Milgram.

\end{abstract}

\maketitle

%
%
%
%
%
%
%
%
%

\section{Introduction}\label{in}

The study of cyclic and symmetric powers has a long and varied
history, and has remained active throughout the development of
algebraic topology since the 1930s. At first, symmetric squares of
smooth manifolds were associated mainly with critical point theory
\cite{mor:cvl}, but by the 1950s cyclic powers of simplicial complexes
had come to underlie Steenrod's construction of mod $p$ cohomology
operations \cite{step:co} and related work on the homology and
cohomology of the symmetric groups \cite{nak:dth}. These interactions
were extended to stable splittings of classifying spaces during the
1980s \cite{mipr:sps}. More recently \cite{kal:spd}, symmetric powers
of smooth manifolds $M$ have been viewed as compactifications of
configuration spaces of finite sets of distinct points on $M$, and
also as important examples of orbifolds \cite{adleru:ost}. Since 2000
or so, the latter perspective has gained popularity within theoretical
physics \cite{bohasc:sac}, where they have led to the theory of
\emph{cyclic} and \emph{permutation orbifolds}.
 
For any topological space $X$, its \emph{cyclic} or \emph{symmetric
  square} $\sptx$ is the orbit space $(X\times X)/\om C_2$ under the
action of the cyclic group $C_2$, generated by the involution $\iota$
that interchanges the factors. Its elements are the unordered
pairs $\{x,y\}$ for all $x,\,y$ in $X$, and the fixed points of
$\iota$ determine the \emph{diagonal} subspace
$\varDelta=\varDelta(X)$, which contains all pairs $\{x,x\}$ and is
homeomorphic to $X$.  If $X$ is a CW complex of finite type, then so
is $\sptx$, and this condition holds for every space considered
below. The quotient map $q\colon X\times X\to\spt(X)$ is a
\emph{ramified covering} in the sense of \cite{smi:trc}, and its
off-diagonal restriction
\[
q'\colon (X\times X)\setminus\varDelta\lra\spt(X)\setminus\varDelta
\]
is a genuine double covering. By definition, the codomain 
$\spt(X)\setminus\varDelta$ is the \emph{configuration space} 
$\mathcal{C}_2(X)$ of unordered pairs of distinct points of $X$. 

In a small number of special cases, $\sptx$ may be identified with
some other familiar space. For example, there are homeomorphisms
\[
\spt(S^1)\;\cong\;M,\quad\spt(\RPT)\;\cong\;\RP^4\sands
\spt(S^2)\;\cong\;\CPT,
\]
where $M$ is the closed M\"{o}bius band. In these cases the integral 
cohomology ring $H^*(\sptx)$ is well-known, but its evaluation 
presents a greater challenge for arbitrary $X$, and few complete 
calculations appear in the literature.

The most basic examples are the spheres $S^n$, which were studied by
Morse in \cite[Chapter VI \S11]{mor:cvl}, and by Nakaoka in a sequence
of papers including \cite{nak:ctc}. Spheres apart, the complex and
quaternionic projective spaces $\KPn$ are good candidates for families
of well-behaved CW complexes with rich internal geometry. Our main
algebraic result is the following distillation of Theorem
\ref{intcohsptn}, which describes the unreduced integral cohomology
ring $H^*(\spt(\KPn)_\ssp)$ by means of generators and relations. The
dimensions of the generators depend on the dimension $d$ of $\bK$ over
$\bR$; thus $d=2$ for $\bK=\bC$ or $4$ for $\bK=\bH$, and $g$, $h$,
and $u_{i,j}$ have dimensions $d$, $2d$, and $2i+jd+1$ respectively.
\begin{thm}\label{distil}
For any $n\geq 1$, the ring $H^*(\spt(\KPn)_\ssp)$ is isomorphic to
\[
\bZ\opfm[\om h^p\!/2^{p-1}\!,\,g^qh^s\!/2^s\!,\,u_{i,j}]\,/\,J_n\,,
\]
where $p,\om q\geq 1$, $s\geq 0$ and $0<i<jd\2/2$; the ideal $J_n$ 
is given by
\[
(2u_{i,j},\,u_{i,j}u_{k,l},\,u_{i,j}h^p\!/2^{p-1}\!,\,
u_{i,j}g^qh^s\!/2^s\!,
\, r_t,\,u_{i,t}:t>n)\,,
\]
for certain homogeneous polynomials $r_t=r_t(g,h)$ of dimension $td$.
\end{thm}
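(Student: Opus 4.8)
The plan is to compute $H^*(\spt(\KPI)_\ssp)$ first and then cut down to finite $n$; throughout, $X$ denotes $\KPn$ or $\KPI$, with basepoints adjoined disjointly. The engine is a ladder of long exact cohomology sequences obtained by comparing the diagonal cofibration $\Delta(X)\to\spt(X)\to\spt(X)/\Delta$ with its Borel analogue $X\times\RPI\to EC_2\times_{C_2}(X\times X)\to C_X$, the two being joined by the map collapsing $EC_2$. Because $C_2$ acts freely on $(X\times X)\setminus\Delta$, that collapse map restricts to a homotopy equivalence $EC_2\times_{C_2}\bigl((X\times X)\setminus\Delta\bigr)\simeq\mathcal{C}_2(X)$; excising a tubular neighbourhood of the diagonal (whose image in either cofibre is contractible) then shows that the induced map $C_X\to\spt(X)/\Delta$ is a homotopy equivalence, and that, for $X$ a closed manifold, both spaces are the one-point compactification $\mathcal{C}_2(X)^+$. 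So in the ladder the two ``configuration'' columns agree; the Borel term $H^*\bigl(EC_2\times_{C_2}(X\times X)\bigr)=H^*\bigl(B(S\wr C_2)\bigr)$, with $S=U(1)$ or $Sp(1)$, is computed from the Serre spectral sequence of $(\KPI)^2\to B(S\wr C_2)\to BC_2$; and $H^*(\Delta)$ and $H^*(X\times\RPI)$ are elementary. The only genuine unknowns are $H^*(\spt(X)_\ssp)$ and the connecting homomorphisms attached to it.

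The remaining ingredient is $\tilde H^*\bigl(\spt(X)/\Delta\bigr)=\tilde H^*\bigl(\mathcal{C}_2(X)^+\bigr)$, and this is where L\"owdin's symmetric orthogonalisation enters. Applied fibrewise, it identifies $\mathcal{C}_2(\KPn)$ --- and, $C_2$-equivariantly, $(\KPn)^2\setminus\Delta$ --- with the total space of a rank-$d$ real vector bundle $\eta_n$ over the closed manifold $N_n$ of unordered pairs of mutually orthogonal $\bK$-lines in $\bK^{n+1}$, so that $\spt(\KPn)/\Delta\cong\Th(\eta_n)$. The base $N_n$ fibres over the Grassmannian of $2$-dimensional $\bK$-subspaces of $\bK^{n+1}$ with fibre $\bR P^d$ (the unordered pairs of orthogonal lines in $\bK^2$, i.e. $\bK P^1$ modulo the orthogonal-complement involution), so $H^*(N_n)$ is obtained by a Gysin/Leray--Hirsch argument carrying the appropriate twist. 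The quaternionic analogue of \emph{Pin} geometry pins down the structure group of $\eta_n$: this bundle is not $\bZ$-orientable, but it carries a canonical \emph{Pin}-type reduction, and that is precisely what governs its Thom isomorphism. The outcome is a description of $\tilde H^*(\Th(\eta_n))$, as a module over $H^*(\spt(\KPn)_\ssp)$, in which the failure of integral orientability forces the Thom class and its powers to be divisible by --- and only by --- prescribed powers of $2$, which is the source of the symbols $h^p/2^{p-1}$ and $g^qh^s/2^s$.

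Feeding the Thom computation, $H^*(B(S\wr C_2))$ and the elementary terms into the ladder now determines $H^*(\spt(\KPI)_\ssp)$. On the Borel side, the $C_2$-invariants of $H^*((\KPI)^2)=\bZ[x,y]$ ($|x|=|y|=d$) supply the polynomial generators $g=\sigma_1$ and $h=\sigma_2$ in dimensions $d$ and $2d$, while $H^{>0}(C_2;-)$ with coefficients in the diagonal monomials $x^ky^k$ supplies a two-parameter family of $2$-torsion classes; the Borel connecting maps are read off from this and, by commutativity of the ladder, restricted along $H^*(\Delta)\hookrightarrow H^*(X\times\RPI)$ to give the connecting maps of the $\spt$-sequence. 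Taking kernels and cokernels produces $H^*(\spt(\KPI)_\ssp)$ additively: the torsion classes become the generators $u_{i,j}$, for which $2u_{i,j}=0$, $u_{i,j}u_{k,l}=0$ and $u_{i,j}\cdot(\text{non-torsion})=0$ are forced by their origin; the non-torsion part is generated by $g$ together with the $2$-divided powers $h^p/2^{p-1}$ and $g^qh^s/2^s$, with exact divisibilities coming from the connecting maps; and the remaining multiplicative relations are governed by a two-term linear recursion, hence expressed through generalised Fibonacci polynomials. The ring structure (not just the module structure) is then fixed by the ring homomorphism $H^*(\spt(\KPI)_\ssp)\to H^*(B(S\wr C_2))$, whose kernel the ladder identifies. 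Finally, since the inclusions $\KPn\hookrightarrow\KPI$ are compatible with everything above, $H^*(\spt(\KPn)_\ssp)$ is the truncation of the infinite case dictated by $H^{>dn}(\KPn)=0$: traced through $N_n$ and $\eta_n$, this kills exactly the torsion generators $u_{i,t}$ with $t>n$ and imposes $r_t(g,h)=0$ for $t>n$, where $r_t$ is the generalised Fibonacci polynomial of dimension $td$ (the image of $x^t+y^t$ under the pertinent Newton-type identity). Collecting generators and relations yields the stated presentation $\bZ[h^p/2^{p-1},\,g^qh^s/2^s,\,u_{i,j}]/J_n$.

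The main obstacle will be the L\"owdin--\emph{Pin} identification together with the $2$-adic bookkeeping it feeds the ladder: pinning down $\eta_n$ precisely enough --- which is exactly what \emph{Pin} geometry is needed for --- to determine not merely the mod $2$ but the integral, and in particular the $2$-power-divisibility, structure of $\tilde H^*(\Th(\eta_n))$, and then propagating those divisibilities and the Fibonacci relations correctly through the ladder while keeping control of products, which the additively exact ladder does not see directly. Identifying the kernel of $H^*(\spt(\KPn)_\ssp)\to H^*(B(S\wr C_2))$, and hence pinning down the ring rather than merely the group structure, is the other delicate point.
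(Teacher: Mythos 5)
Your outline retraces the paper's own proof essentially step for step: the ladder joining the diagonal cofibration of $\spt(X)$ to its Borel analogue, L\"owdin symmetric orthogonalisation identifying $\spt(\KPn)/\varDelta$ as the Thom space of a $\Pindd$ bundle $\theta_n$ over the pair-of-orthogonal-lines manifold $\varGamma_n$ (itself fibred in $\bR P^d$ over the Grassmannian), Serre spectral sequences for the Borel term, generalised Fibonacci polynomials governing the connecting homomorphisms, and truncation via the power-sum polynomials $r_t$ together with the vanishing of the $u_{i,t}$. The one point that deserves correction is your claimed source of the divided generators $h^p/2^{p-1}$ and $g^qh^s/2^s$: they are not produced by any $2$-divisibility of a Thom class --- $\theta$ is non-orientable, so it has no integral Thom class, and $H^{ev}(\MPd)$ embeds via $b^*$ as the ideal $(m^2-4y)\subset Z^*$ on a single $2d$-dimensional generator $s$ with no divided-power structure whatsoever. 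The divisibilities actually arise from the odd-degree connecting homomorphism $\delta$ having image of order $2$, so that $\ker\delta=(2z)$ in $H^*(\KPI)$ and the canonical class $n_p\in H^{pd}(\spt)$ with $i^*_\varDelta(n_p)=2z^p$ necessarily satisfies $\pi^*(n_{2k})=2y^k$ rather than $y^k$; hence $n_{2k}=h^k/2^{k-1}$ once $g$ and $h$ are identified with $m$ and $2y$ --- see Lemma \ref{kerp3}, Remark \ref{remslad}, Proposition \ref{evdimfree} and the proof of Theorem \ref{intcohspt}. This is a heuristic misattribution rather than a gap in the plan, since your proposed kernel/cokernel analysis of the ladder is exactly what produces it, but it is worth realigning before attempting the $2$-adic bookkeeping you rightly flag as the delicate part.
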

The torsion-free product structure is indicated by the form of the
generating monomials; for example, $g\cdot h=2\cdot gh\2/\1 2$
in $H^{3d}(\spt(\KPn))$, for any $n\geq 2$.

Since this manuscript was begun, two significant advances have been
made.  Firstly, Gugnin \cite{gug:icr} computed the torsion free
quotient ring $H^*(\sptx)/\Tor$ for any CW complex of finite type,
using the ring structure of $H^*(X)$ and the transfer homomorphism
\cite{smi:trc} associated to $q$. Secondly, Totaro \cite[Theorem
  1.2]{tot:ich} converted Milgram's description \cite{mil:hsp} of the
integral \emph{homology} groups $H_*(\sptx)$ into dualisable form, at
least for finite connected CW complexes whose integral cohomology
groups are torsion free and zero in odd dimensions (such an $X$ may be
called \emph{even}). It is straightforward to check that our
calculations are compatible with these developments, as well as with
Nakaoka's classic works \cite{nak:cpf} and \cite{nak:ctc}, which focus
mainly on $\zmt$ coefficients.

More, however, is true. The results of \cite{gug:icr} and
\cite{tot:ich} may be combined with those of \cite{nak:ctc} to show
that Theorem \ref{distil} is generic, insofar as it extends to
describing $H^*(\sptx)$ in terms of $H^*(X)$ for any even $X$. This
will be discussed in \cite{bora:sse}, and compared with the
geometrical approach in cases such as the octonionic projective
plane. The example $\spt(\OPT)$ is especially attractive, and offers
further evidence that $H^*(\sptx)$ may sometimes be understood by
direct appeal to the geometry of $X$. Weighted projective spaces are
also of interest in this context.

A crucial supporting r\^ole is played by the homotopy
theoretic analogue of $\sptx$, namely the \emph{Borel construction}
$S^\infty\2\times_{C_2}\?(X\times X)$, where $C_2$ acts antipodally,
and therefore freely, on the contractible sphere $S^\infty$. It
contains the diagonal subspace $\RPI\2\times X$, sometimes written 
as $\idehat{\varDelta}=\idehat{\varDelta}\om(X)$ below. There is a 
\emph{Borel bundle}
\begin{equation}\label{borbun}
X\times X\stackrel{k}{\lra}S^\infty\?\times_{C_2}\!(X\times X)
\stackrel{r}{\lra}\RPI,
\end{equation}
in which $\pi_1(\RPI)\cong C_2$ acts on the fibre by $\iota$. The
integral cohomology ring $H^*(S^\infty\2\times_{C_2}\?(X\times X))$
may often be computed via the Leray-Serre spectral sequence of
\eqref{borbun}, as for $X=\KPn$ in Theorem \ref{intcohbn} below.

There is also a canonical projection map
\begin{equation}\label{canproj}
\pi\colon S^\infty\?\times_{C_2}\!(X\times X) \lra\sptx,
\end{equation} 
which identifies $\idehat{\varDelta}$ with $\pi^{-1}(\varDelta)$; its
off-diagonal restriction
\begin{equation}\label{pinodiag}
\pi'\colon S^\infty\?\times_{C_2}\!(X\times X)
\setminus\idehat{\varDelta}\lra
\spt(X)\setminus\varDelta
\end{equation}
is a fibration with contractible fibres, and hence a homotopy
equivalence. The Vietoris-Begle theorem shows that $\pi$
induces an isomorphism
\[
\pi^*\colon H^*(\sptx;\bZ[1/2])\lra 
H^*(S^\infty\?\times_{C_2}\!(X\times X);\bZ[1/2]),
\]
so the main difficulties in proving Theorem \ref{intcohsptn} involve
$2$-torsion and $2$-primary product structure. Our assumptions on 
$X$ ensure that the induced map 
\begin{equation}\label{pipp}
\pi''\colon S^\infty\?\times_{C_2}\!(X\times X)
/\idehat{\varDelta}\lra\spt(X)/\varDelta
\end{equation}
of quotient spaces is also a homotopy equivalence, allowing
cohomological properties of the Borel construction to be related
directly to those of $\sptx$, as in Bredon \cite[Chapter
  VII]{bre:ict}.

For $X=\KPn$, this relationship brings $\Pin$ geometry into play.
Because $\KPn$ is closed, compact and admits a suitable metric,
$\varDelta$ is a canonical deformation retract of a certain singular
open neighbourhood \cite[Corollary 4.2]{kata:gfg}. It turns out that
$\spt(\KPn)\setminus\varDelta$ contains an analogous closed, compact
submanifold $\varGamma_n$, which is a deformation retract; it may be
imagined as an \emph{anti-diagonal}. The retraction is canonical
because $\spt(\KPn)\setminus\varDelta$ is diffeomorphic to the total
space of a $\Pindd$ vector bundle $\theta_n$ over $\varGamma_n$, where
$\ddagger$ stands for $c$ when $d=2$, or $-$ when $d=4$ \cite[\S
  2]{hakrte:nfm}. In terms of \eqref{pipp}, $\spt(\KPn)/\varDelta$ is
therefore the Thom space of $\theta_n$, whose cohomological structure
helps to unlock the $2$-primary information. The relationship between
$\Pin^-(4)$ and $\Pin^+(4)$ has been clarified in the physics
literature \cite[\S 5.3]{bedemogwkr:pgp}, but continues to cause
confusion elsewhere.

Our results are inspired by two sources. One is James, Thomas, Toda,
and Whitehead \cite{jathtowh:sss}, who describe $\spt(S^n)/\varDelta$
as a Thom space over $\RPn$; our approach applies equally well to this
situation, and recovers $H^*(\spt(S^n))$ for $n\geq 1$. The other is
Yasui \cite{yas:rsp}, who introduces Stiefel manifolds into his
determination of $H^*(\mathcal{C}_2(\CPn))$, and builds on Feder's mod
2 calculations \cite{fed:rsp}. Recently, Dominguez, Gonzalez and
Landweber \cite{dogola:icc} have computed $H^*(\mathcal{C}_2(\RPn))$,
so the quaternionic case of Theorem \ref{intcohbpdn} completes the
trio.

Since $H^*(\KPn)\cong\bZ[z]/(z^{n+1})$ is the truncation of
$H^*(\KPI)$, the possibility suggests itself of calculating 
$H^*(\spt(\KPI))$ and expressing  $H^*(\spt(\KPn))$ as an appropriate
quotient. This is indeed feasible, and occupies the second half of
Theorem \ref{intcohsptn}. The fact that  
$\varGamma:=\varGamma_\infty$ is a model for the classifying
space $B\Pindd$ provides a convenient point of departure for
\S \ref{clsppibu}.

There are seven subsequent sections as follows, and an Appendix of 
low dimensional tables.
\begin{enumerate}
\item[\ref{clsppibu}.]
Classifying spaces and $Pin(d)$ bundles
\item[\ref{orcose}.]
Orthogonalisation and cofibre sequences 
\item[\ref{chcl}.]
Characteristic classes
\item[\ref{motwco}.]
Mod 2 cohomology
\item[\ref{inco}.]
Integral cohomology
\item[\ref{motwtr}.]
Mod 2 truncation
\item[\ref{intr}.]
Integral truncation.
\end{enumerate}

The results for $\HPn$ appear in the first author's PhD thesis
\cite{boo:phd}, and were presented in August 2014 as a contributed
talk at the Seoul ICM. They were originally intended for application
to quaternionic cobordism; that work remains in progress, along
with extensions to other cohomology theories and higher cyclic
powers. 

The authors wish to thank Andrew Baker for his ongoing interest and 
encouragement, and Larry Taylor for helpful comments on 
\emph{Pin$^\pm\?(k)$}.

%
%
%
%
%
%
%
%
%

\section{Classifying spaces and \Pind$\2(d)$ bundles}\label{clsppibu}

This section establishes notation that allows the complex and
quaternionic cases to be treated simultaneously, and collates
background information on certain low dimensional compact Lie
groups. The aim is to contextualise $\Pin^c(2)$ and $\Pin^{\pm}(4)$;
the latter requires special care, having been the subject of several
ambiguities in the literature. Background sources include
\cite{por:cac} for quaternionic linear algebra, \cite{bak:tsc} for the
accidental isomorphisms from a quaternionic viewpoint, and
\cite{yas:rsp} for aspects of the complex case.

Henceforth, $\bK$ denotes the complex numbers $\bC$ or the quaternions
$\bH$, and $d$ stands for their respective dimensions $2$ or $4$, as
real vector spaces. Scalars act on the \emph{right} of vector spaces
over $\bK$, unless otherwise stated; thus $\GL(n,\bK)$ act on the
\emph{left} of $\bK^n$, whose elements are column vectors. By
definition, the compact Lie group $O_\bK(n)<\GL(n,\bK)$ consists of
all matrices $Q$ that preserve the Hermitian inner product
\[
\br{u,\om v}\;=\;\bar{u}_1v_1+\cdots +\bar{u}_nv_n\;=\;u^\star v
\]
on $\bK^n$, where ${}^\star$ denotes conjugate transpose. Such
matrices are characterised by the property that $Q^{-1}\?=Q^\star$; in
particular, $O_\bK(n)$ is the unitary group $U(n)$ or the symplectic
group $Sp(n)$, over $\bC$ or $\bH$ respectively.

When $n=2$ there exist important \emph{accidental isomorphisms}
\begin{equation}\label{accisos}
U(2)\;\cong\;\Spin^c(3)\spandsp Sp(2)\;\cong\;\Spin(5).
\end{equation}
They may be understood in terms of the real $(d+1)$-dimensional 
vector space
\[
\mathfrak{H\pfm}_2^0(\bK)\;=\;
\left\{\begin{pmatrix}r&k\\\bar{k}&-r
\end{pmatrix}
:(r,k)\in\bR\times\bK\right\}
\]
of $2\times 2$ Hermitian matrices with zero trace, on which $O_\bK(2)$
acts on the left by
\begin{equation}\label{spinact}
Q\cdot Z\;=\;Q\pfm Z\pfm Q^\star
\end{equation}
for any $Z$ in $\mathfrak{H\pfm}_2^0(\bK)$. This provides the action of
$\Spin^c(3)$ on $\bR^3$ or $\Spin(5)$ on $\bR^5$, although the former
is often given by the equivalent action on \emph{skew}-Hermitian
matrices.

For any $n\geq 2$, let $V_{n+1,2}$ denote the Stiefel manifold of
orthonormal $2$-frames in $\bK^{n+1}$; it is a closed compact manifold
of dimension \mbox{$(2n+1)d-2$}, whose elements are specified by
$(n+1)\times 2$ matrices $(u\:v)$ over $\bK$, with orthonormal
columns. The group $O_\bK(2)$ acts freely on the \emph{right}, and has
orbit space the Grassmannian $Gr_{n+1,2}$.  For $n=\infty$, the
colimit $V_2\letbe V_{\infty,2}$ is contractible, and $Gr_2\letbe
Gr_{\infty,2}$ serves as a classifying space $BO_\bK(2)$. In view of
the accidental isomorphism \eqref{accisos}, $BO_\bK(2)$ is
$B\Spin^c(3)$ or $B\Spin(5)$, and the associated real $(d+1)$-plane
bundle $\chi$ is induced by the action \eqref{spinact}. Of
course $BO_\bK(2)$ also admits the standard universal $\bK^2$ bundle
$\omega_2$, which is written as $\zeta_2$ over $BU(2)$ or $\xi_2$
over $\BSp(2)$ in \S\ref{chcl}.

Note that $O_\bK(1)$ is the unit sphere and multiplicative subgroup
$S^{d-1}<\bK^\times$, and $BO_\bK(1)$ is the projective space $\KPI$,
with tautological line bundle $\omega=\omega_1$; the latter is written
as $\zeta$ over $\CPI$ or $\xi$ over $\HPI$. Thus $S^{d-1}\2\times
S^{d-1}<O_\bK(2)$ is the subgroup of diagonal matrices, and isomorphic 
to $\Spin^c(2)$ or $\Spin(4)$.

\begin{defns}\label{ndzmtssubgps}
The subgroup $P^d<O_\bK(2)$ consists of all matrices 
\begin{equation*}
\left\{
\begin{pmatrix}
a&0\\
0&b
\end{pmatrix},
\begin{pmatrix}
0&a\\
b&0
\end{pmatrix}:
a,\om b\in S^{d-1}\right\};
\end{equation*}
the subgroups $F^d\letbe C_2\om\times S^{d-1}$ and 
$S^{d-1}\2\times S^{d-1}<P^d$ consist of matrices
\begin{equation*}
\left\{
\begin{pmatrix}
a&0\\
0&a
\end{pmatrix},
\begin{pmatrix}
0&a\\
a&0
\end{pmatrix}:
a\in S^{d-1}\right\}\sands
\left\{
\begin{pmatrix}
a&0\\
0&b
\end{pmatrix}:
a,\om b\in S^{d-1}\right\}
\end{equation*}
respectively, where $C_2$ is generated by
$\tau\letbe\left(\begin{smallmatrix}0&1\\1&0\end{smallmatrix}\right)$.
\end{defns}
\begin{rems}\label{remcosetsps}
The group $P^d$ is the wreath product $S^{d-1}\2\wr C_2$, and the
normalizer of $S^{d-1}\2\times S^{d-1}$ in $O_\bK(2)$; so it is
isomorphic to the compact Lie group \smash{$\Pindd$}, where $\ddagger$
stands for $c$ when $d=2$, or $-$ when $d=4$ \cite[\S
  2]{hakrte:nfm}. This motivates the notation, and holds because
$\Pindd$ is the normalizer of $\Spindd$ in
\mbox{$\Spin^{\,\ddagger\0}(d+1)$}, and $\Spin^{\,-\0}=\Spin$.  The
quotient epimorphism $P^d\to C_2$ is the composition of the double
covering $\Pindd\to O(d)$ with the determinant.
 \end{rems}
  
The left action \eqref{spinact} of $P^d$ on $\bR\times\bK$ is made 
explicit by 
\begin{equation}\label{spinew}
\tau\cdot(r,k)\;=\;(-r,\bar{k})\spandsp
\begin{pmatrix}a&0\\0&b\end{pmatrix}\cdot 
(r,k)\;=\;(r,ak\bar{b})\,.
\end{equation}
This splits as the product of the actions on $\bR$ by $\det$, and on
$\bR^d\cong\bK$ by
\begin{equation}\label{spinex}
\tau\cdot k\;=\;\bar{k}\spandsp
\begin{pmatrix}a&0\\0&b\end{pmatrix}\cdot 
k\;=\;ak\bar{b}\,.
\end{equation}
\begin{prop}\label{cosetsps}
There are homeomorphisms of left coset spaces
\begin{gather*}
{\bf (1)}\; O_\bK(2)/(S^{d-1}\2\times S^{d-1})\;\cong\; S^d,
\quad{\bf (2)}\; O_\bK(2)/P^d\;\cong\;\bR P^d,\\
{\bf (3)}\; P^d/F^d\;\cong\;S^{d-1}
\;\;\,\text{and}\quad
{\bf (4)}\; P^d/(S^{d-1}\2\times S^{d-1})\;\cong\;C_2. 
\end{gather*}
\end{prop}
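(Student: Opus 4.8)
The plan is to derive all four homeomorphisms from the single action \eqref{spinact} of $O_\bK(2)$ on $\mathfrak{H}_2^0(\bK)\cong\bR^{d+1}$, together with elementary coset bookkeeping. The starting point is that $Q\cdot Z=QZQ^\star$ preserves $-\det Z=r^2+|k|^2$ — over $\bC$ because $|\det Q|=1$, and in general because, as noted after \eqref{spinact} via \eqref{accisos}, the action factors through $SO(d+1)$ — so it restricts to an action on the unit sphere $S^d\subset\mathfrak{H}_2^0(\bK)$. For \textbf{(1)} I would check transitivity and compute the stabiliser of $Z_0=\diag(1,-1)$. Transitivity is the spectral theorem: a trace-zero $2\times2$ Hermitian matrix over $\bK$ with $-\det=1$ has eigenvalues $\pm1$, hence equals $QZ_0Q^\star$ for a suitable $Q\in O_\bK(2)$. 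For the stabiliser, writing $Q=(u\ v)$ with orthonormal columns one has $QZ_0Q^\star=uu^\star-vv^\star$, and combining this with $uu^\star+vv^\star=I$ forces $uu^\star=e_1e_1^\star$ and $vv^\star=e_2e_2^\star$, i.e.\ $u=e_1a$, $v=e_2b$ with $a,b\in S^{d-1}$, so $Q=\diag(a,b)$. Thus the orbit map induces a continuous bijection $O_\bK(2)/(S^{d-1}\times S^{d-1})\to S^d$, a homeomorphism since the source is compact and the target Hausdorff.

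Part \textbf{(4)} is then immediate: sending a diagonal matrix to $1\in C_2$ and an anti-diagonal one to $\tau$ defines a surjective homomorphism $P^d\to C_2$ with kernel exactly the diagonal subgroup $S^{d-1}\times S^{d-1}$ — this is the composite of $\Pindd\to O(d)$ with $\det$ from Remark \ref{remcosetsps}. For \textbf{(2)} I would feed \textbf{(1)} and \textbf{(4)} into the standard identification $(G/H)/(N/H)\cong G/N$ (again a homeomorphism by the compact/Hausdorff argument): with $G=O_\bK(2)$, $H=S^{d-1}\times S^{d-1}$, $N=P^d$ this gives $O_\bK(2)/P^d\cong S^d/C_2$, where the non-trivial element of $C_2=N/H$ acts on $S^d=O_\bK(2)/H$ by right translation by $\tau$. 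Transporting this action to $\mathfrak{H}_2^0(\bK)$ sends $Q\cdot Z_0$ to $Q\tau\cdot Z_0=Q\cdot(\tau Z_0\tau^\star)$, and the one-line computation $\tau Z_0\tau^\star=-Z_0$ shows the $C_2$-action is the antipodal map; hence $O_\bK(2)/P^d\cong S^d/\{\pm1\}=\bR P^d$.

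For \textbf{(3)} I would exhibit the map $\phi\colon P^d\to S^{d-1}$ given by $\phi(\diag(a,b))=ab^{-1}$ and $\phi\bigl(\begin{smallmatrix}0&a\\ b&0\end{smallmatrix}\bigr)=ab^{-1}$ (legitimate over $\bH$ using the group structure of $S^3$ and the right-scalar convention), and verify that it is continuous, surjective, and constant on the right $F^d$-cosets — both generators $\diag(c,c)$ and $\tau$ of $F^d$ leave $ab^{-1}$ unchanged — while $\phi(Q_1)=\phi(Q_2)$ forces $Q_1^{-1}Q_2\in F^d$ by a short computation with the scalars. Once more, a continuous bijection from the compact space $P^d/F^d$ onto the Hausdorff space $S^{d-1}$ is a homeomorphism. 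Equivalently one may note $P^d=F^d\cdot(S^{d-1}\times S^{d-1})$ with $F^d\cap(S^{d-1}\times S^{d-1})$ the diagonal $S^{d-1}$, whence $P^d/F^d\cong(S^{d-1}\times S^{d-1})/\Delta S^{d-1}\cong S^{d-1}$.

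I do not expect a genuine obstacle: each part collapses to a short linear-algebra or coset calculation. The one place that needs real care is the quaternionic case — checking that the spectral theorem, the implication $uu^\star=e_1e_1^\star\Rightarrow u\in e_1S^3$, and the formula for $\phi$ all survive the non-commutativity of $\bH$ and the convention that scalars act on the right; so the main ``difficulty'' is simply confirming that the complex arguments transcribe verbatim to $\bH$.
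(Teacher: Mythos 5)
Your proof is correct and follows the same core strategy as the paper's: the action \eqref{spinact} of $O_\bK(2)$ on $\mathfrak{H}_2^0(\bK)\cong\bR\times\bK$ drives (1) and (2), the action \eqref{spinex} of $P^d$ on $\bK$ drives (3), and (4) is the isomorphism induced by the quotient epimorphism $P^d\to C_2$. The paper simply asserts transitivity and identifies the isotropy subgroups of $(1,0)\in S^d$, $[1,0]\in\bR P^d$ and $1\in S^{d-1}$, whereas you supply the spectral-theorem verification of transitivity and the explicit rank-one argument $uu^\star=e_1e_1^\star\Rightarrow u\in e_1S^{d-1}$ for the stabiliser in (1); your only genuine departure is in (2), where you re-derive the homeomorphism from (1) and (4) via $(G/H)/(N/H)\cong G/N$ together with the computation $\tau Z_0\tau^\star=-Z_0$, rather than directly observing (as the paper does) that the induced transitive action on $\bR P^d$ has $[1,0]$ with isotropy $P^d$ because $\tau\cdot(1,0)=(-1,0)$. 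Both are one-line orbit–stabiliser arguments carrying the same content; your route makes the identification $\bR P^d=S^d/\{\pm1\}$ explicit at the cost of invoking the tower isomorphism, and your caution about the right-scalar convention and quaternionic non-commutativity is well placed and resolved correctly.
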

\begin{proof}
For (1), observe that the left action \eqref{spinact} of $O_\bK(2)$ on
$S^d\subset\bR\times\bK$ is transitive, and $(1,0)$ has isotropy
subgroup $S^{d-1}\2\times S^{d-1}$. The induced action of $O_\bK(2)$
on $\bR P^d$ is also transitive, and (2) holds because
$\tau\cdot(1,0)=(-1,0)$, so $[1,0]$ has isotropy subgroup $P^d$.  For
(3), note that the left action \eqref{spinex} of $P^d$ on
$S^{d-1}\subset\bK$ is transitive, and $1$ has isotropy subgroup
$F^d$. Finally, (4) is the isomorphism of topological groups induced by 
the quotient epimorphism.
\end{proof}

Any closed subgroup $H$ of a compact Lie group $G$ gives rise to 
a bundle 
\[
G/H\stackrel{i}{\lra}BH\stackrel{p}{\lra}BG,
\]
in which $p$ is modelled by the natural projection $EG/H\to EG/G$;
see \cite[Chapter II]{mito:tlg}, for example. So Proposition 
\ref{cosetsps} has the following consequences.
\begin{cor}\label{fibrns}
There exist bundles
\begin{gather*}
{\bf (1)}\; S^d\lra B(S^{d-1}\2\times
S^{d-1})\stackrel{p_1}{\lra}BO_\bK(2), \;\; {\bf (2)}\; \bR
P^d\stackrel{i_2}{\lra} \BPd\stackrel{p_2}{\lra}BO_\bK(2),\\ {\bf
  (3)}\; S^{d-1}\lra \BFd\stackrel{p_3}{\lra}\BPd, \;\;{\bf (4)}\;
C_2\lra B(S^{d-1}\2\times S^{d-1})\stackrel{p_4}{\lra}\BPd,
\end{gather*}
where {\rm (1)} is the sphere bundle of $\chi$, {\rm (2)} is the
projectivisation of $\chi$, {\rm (3)}~is the sphere bundle of
the universal $\Pindd$ vector $d$-plane bundle $\theta$, and {\rm (4)} 
is the double covering associated to the determinant line bundle
$\det(\theta)$.
\end{cor}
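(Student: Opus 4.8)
The plan is to derive all four statements from the universal bundle $G/H\lra BH\lra BG$ attached to a closed subgroup $H$ of a compact Lie group $G$ (recalled above from \cite[Chapter II]{mito:tlg}), by feeding in the four coset homeomorphisms of Proposition \ref{cosetsps} and then identifying each resulting bundle geometrically. For {\bf (1)} and {\bf (2)} one takes $G=O_\bK(2)$ with $H=S^{d-1}\2\times S^{d-1}$ and $H=P^d$ respectively; for {\bf (3)} and {\bf (4)} one takes $G=P^d$ with $H=F^d$ and $H=S^{d-1}\2\times S^{d-1}$. In each case the fibre $G/H$ is read off directly from Proposition \ref{cosetsps}, and $BP^d$ may be written as $\BPd$ because $P^d\cong\Pindd$ by Remark \ref{remcosetsps}. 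Since $S^{d-1}\2\times S^{d-1}$ is normal in $P^d$ (conjugation by $\tau$ merely interchanges the two factors), the map $p_4$ is in fact a principal $C_2$-bundle, hence a genuine double covering.

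It remains to identify the four bundles. By definition $\chi=EO_\bK(2)\times_{O_\bK(2)}(\bR\times\bK)$ is the $(d+1)$-plane bundle associated to the action \eqref{spinact}. Restricting this fibrewise action to the unit sphere $S^d\subset\bR\times\bK$, the computation in the proof of Proposition \ref{cosetsps}(1) shows the orbit of $(1,0)$ is all of $S^d$ with isotropy $S^{d-1}\2\times S^{d-1}$; hence the sphere bundle $S(\chi)=EO_\bK(2)\times_{O_\bK(2)}S^d$ is exactly $EO_\bK(2)/(S^{d-1}\2\times S^{d-1})=B(S^{d-1}\2\times S^{d-1})$, and $p_1$ is its projection, proving {\bf (1)}. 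Likewise the induced action of $O_\bK(2)$ on the projectivisation $\bR P^d=P(\bR\times\bK)$ is transitive with isotropy of $[1,0]$ equal to $P^d$ by the proof of Proposition \ref{cosetsps}(2), so $P(\chi)=EO_\bK(2)\times_{O_\bK(2)}\bR P^d=EO_\bK(2)/P^d=\BPd$ and $p_2$ is the projectivisation of $\chi$, which is {\bf (2)}.

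For {\bf (3)} and {\bf (4)} we use the $d$-dimensional representation \eqref{spinex} of $P^d$ on $\bR^d\cong\bK$. Because the identity component $S^{d-1}\2\times S^{d-1}$ acts through $SO(d)$ while $\tau$ acts by the orientation-reversing conjugation $k\mapsto\bar k$, this representation covers the standard $O(d)$-representation and, under the isomorphism $P^d\cong\Pindd$ of Remark \ref{remcosetsps}, is the defining one; it therefore induces the universal $\Pindd$ vector $d$-plane bundle $\theta$ over $\BPd$. Restricting fibrewise to $S^{d-1}\subset\bK$, the proof of Proposition \ref{cosetsps}(3) gives a transitive action with isotropy of $1$ equal to $F^d$, so $S(\theta)=EP^d/F^d=\BFd$, which is {\bf (3)}. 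Finally, $\det(\theta)=\Lambda^d\theta$ is the real line bundle whose first Stiefel--Whitney class is classified by the composite $\pi_1(\BPd)=\pi_0(P^d)\to C_2$; by Remark \ref{remcosetsps} this is precisely the quotient epimorphism $P^d\to C_2$, whose kernel is the subgroup $S^{d-1}\2\times S^{d-1}$ of diagonal matrices. Hence the double covering of $\BPd$ associated to $\det(\theta)$ is $B(S^{d-1}\2\times S^{d-1})\to\BPd$, which is $p_4$, proving {\bf (4)}.

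The existence half of the statement is immediate, so the work is concentrated in the identifications, and I expect the main obstacle to be the bookkeeping in {\bf (3)} and {\bf (4)}: one must check that the concrete representation \eqref{spinex} really is the defining representation of $\Pindd$ rather than merely some $d$-plane bundle, and that its determinant line bundle is detected by the quotient $P^d\to C_2$ --- which rests on the fact, valid for both $\bK=\bC$ and $\bK=\bH$, that conjugation $k\mapsto\bar k$ reverses orientation on $\bK\cong\bR^d$.
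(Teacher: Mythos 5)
Your proposal is correct and takes essentially the same route as the paper: the paper's proof likewise invokes the universal bundle $G/H\lra BH\lra BG$ and then observes that the coset homeomorphisms of Proposition \ref{cosetsps} are equivariant for the $G$-action on $G/H$ and the geometric actions \eqref{spinact}, Remark \ref{remcosetsps}, and \eqref{spinex} on $S^d$, $\bR P^d$, $S^{d-1}$, and $C_2$, which identifies each abstract bundle with the corresponding sphere bundle, projectivisation, or double covering. You spell out the identifications in more detail than the paper does (notably the normality remark explaining why $p_4$ is a principal $C_2$-bundle, and the representation-theoretic bookkeeping behind (3) and (4)), but the underlying argument is the same.
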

\begin{proof}
The homeomorphisms of Proposition \ref{cosetsps} associate the 
left actions of $O_\bK(2)$ on the coset spaces (1) and (2) to their 
left actions on $S^d$ and $\om\bR P^d$, as given by 
\eqref{spinact}; and the left actions of $P^d$ on the coset spaces 
(3) and (4) to their left actions on $S^{d-1}$ and $\om C_2$, as 
given by Remarks \ref{remcosetsps} and \eqref{spinex}.
\end{proof}

\begin{rems}
The proof of Corollary \ref{fibrns} uses the orbit space  
\[
\varGamma\;\letbe\;V_2/P^d\;=\;
\left\{[u],[v]\,:\,u,v\in S^\infty,\;u^\star v=0\right\}
\]
as the model for $\BPd$. Its elements are unordered pairs of
orthogonal lines in $\bK^\infty$, so $\varGamma$ may be interpreted 
as a subspace of $\spt(\KPI)$. The corresponding models for 
$B(S^{d-1}\?\times\2 S^{d-1})$ and 
$\BFd$ are $V_2/(S^{d-1}\?\times\2 S^{d-1})$ and $V_2/F^d$ 
respectively.
\end{rems}
\begin{rems}\label{splitlam}
The universal $\Pindd$ structure on $\theta$ corresponds to the 
induced $\Spin^{\,\ddagger\0}(d+1)$ structure on 
$\det(\theta)\oplus\theta$, via the isomorphism
\begin{equation}\label{pindetsplit}
p_2^*(\chi)\;\cong\;\det(\theta)\oplus\theta
\end{equation}
associated to the splitting \eqref{spinex} of \eqref{spinew}. In the
quaternionic case, this exemplifies \cite[Lemma 1.7]{kita:psl};
moreover, $\Pin^-(4)$ and $\Pin^+(4)$ are isomorphic \cite[\S
  5.3]{bedemogwkr:pgp}, so $\theta$ must not be confused with the
universal $\Pin^+(4)$ bundle over $\varGamma$. The latter involves 
modifying \eqref{spinex} by $\tau\cdot k=-\bar{k}$, for any
$k$ in $\bH$ \cite{boo:phd}.

In either case, the bundle of Corollary \ref{fibrns}(2) arises from
the bundle (1) by factoring out the action of $\tau$, so the double
covering (4) is also the $S^0$-bundle of the tautological real line
bundle $\lambda$ over $\RP(\chi)$. Thus \eqref{pindetsplit} coincides
with the standard splitting of $p_2^*(\chi)$ as
$\lambda\oplus\lambda^\perp$.
\end{rems}

The universal $\Pindd$ disc bundle may be displayed as the
diagram
\begin{equation}\label{daddiag}
S(\theta)\stackrel{\subset}{\llra}
D(\theta)\twa{\raisebox{1pt}{$\scriptstyle\supset$}}{s}\varGamma,
\end{equation}
where $s$ is the projection and the zero-section its left inverse. By
\eqref{spinex} this is homeomorphic to the diagram
\begin{equation}\label{saddad}
V_2\times_{P^d}\2 S^{d-1}\stackrel{\subset}{\llra}
V_2\times_{P^d}\2 
D^d\twa{\raisebox{1pt}{$\scriptstyle\supset$}}{s}V_2/P^d,
\end{equation}
where $P^d$ acts on the unit disc $D^d\subset\bK$ by $\tau\cdot
q=\bar{q}$ and
$\left(\begin{smallmatrix}a&0\\0&b\end{smallmatrix}\right)\cdot
  q=aq\bar{b}$. Then Proposition \ref{cosetsps}(3) identifies
  $S(\theta)$ as a model for $\BFd$.

The \emph{open} disc bundle  associated to $\theta$ has total space 
$D_o(\theta)=V_2\times_{P^d}\2\opd{d}$, and fibre the open unit 
disc $\opd{d}\subset\bK$.

\begin{defns}\label{defgamman}
For any $n\geq 1$, the smooth manifold $\varGamma_n\subset\varGamma$
is the orbit space $V_{n+1,2}/P^d$, of dimension $(2n-1)d$. Its
elements are unordered pairs of orthogonal lines in $\bK^{n+1}$, so
$\varGamma_n$ may be interpreted as a subspace of $\sptKPn$; it also
admits the $\Pindd$ bundle $\theta_n$, which is the restriction of
$\theta$.
\end{defns}

By construction, the inclusion $\varGamma\subset\spt(\KPI)$ is 
the colimit of the inclusions $\varGamma_n\subset\sptKPn$, and 
\eqref{daddiag} is the colimit of the disc bundle diagrams
\[
S(\theta_n)\stackrel{\subset}{\llra}D(\theta_n)
\twa{\raisebox{1pt}{$\scriptstyle\supset$}}{s_n}\varGamma_n\,.
\]
Similarly, $D_o(\theta)$ is the colimit of the total spaces 
$D_o(\theta_n)$.

%
%
%
%
%
%
%
%
%

\section{Orthogonalisation and cofibre sequences}\label{orcose}

Inspiration for this section is provided by \cite{jathtowh:sss}, 
where the cofibre sequence 
\begin{equation}\label{jathtowhcs}
S^n\stackrel{i_\varDelta}{\llra}\sptsn\stackrel{b_\varDelta}{\llra}
\Th(\eta_n^\perp)\llra\dots
\end{equation}
is introduced; $i_\varDelta$ denotes the inclusion of the diagonal
$S^n$, and $\Th(\eta_n^\perp)$ is the Thom space of the complement
of the tautological line bundle $\eta_n$ over $\RPn$, for $n\geq
1$. An analogue of \eqref{jathtowhcs} is developed for
$\sptKPn$, and $\Th(\theta_n)$ is identified with the $1$-point
compactification of the configuration space $\mathcal{C}_2(\KPn)$.

Henceforth, it is convenient to denote $\sptKPn$ by $\sptn$. As above,
$\KPn$ is taken to be the quotient space
\smash{$S^{(n+1)d-1}\?/S^{d-1}$} of the unit sphere; so for any $[y]$,
$[z]$ in $\KPn$, the real number $|y^\star z|$ is well-defined, and
determines a continuous function $\epsilon\colon \sptn\to[0,1]$.  By
definition, $\epsilon$ takes value $1$ on the diagonal $\KPn$, and $0$
on the subspace $\varGamma_n$ of Definition \ref{defgamman}. The
quantity $(1-|y^\star z|^2)^{1/2}$ is known as the \emph{chordal
  distance} from $[y]$ to $[z]$ in $\KPn$.

Following \S\ref{in}, the diagonal
$\varDelta_n=\KPn\subset\sptn$ has an open neighbourhood associated to
the tangent bundle $\tau_{\bK P^n}$. Its fibre is the open cone on
$\bR P^{nd-1}$, and it may be identified with
$\epsilon^{-1}(0,1]$. Now our aim is to find a similar link
between $\epsilon^{-1}[0,1)$ and $\theta_n$, for which a brief 
diversion is required.

\begin{defns} 
For any $h$ in $\opd{d}\subset\bK$, the positive real number $R$ 
and the positive definite Hermitian matrix $M_h$ are given by
\[
R\letbe(1+|h|)^{1/2}+(1-|h|)^{1/2}\spandsp
M_h\;\letbe\;
\begin{pmatrix}
R^2\!/2&h\\[.1em]
\bar{h}&R^2\!/2
\end{pmatrix}\?\2R^{-1}.
\]
So $R^2+4R^{-2}|h|^2=4$ and $(1-|h|^2)^{1/2}=1-2R^{-2}|h|^2$,
whence
\begin{equation}\label{mhmhinv}
M_h^2=
\begin{pmatrix}1&h\\\bar{h}&1\end{pmatrix}
\sands M_h^{-1}=
\begin{pmatrix}
R^2\!/2&-h\\[.1em]
-\bar{h}&R^2\!/2
\end{pmatrix}\?\2R^{-1}\1\big(1-|h|^2\big)^{-1/2}.
\end{equation}
The matrix $(u_h\:v_h)\letbe(u\:v)\opfm M_h$ lies in the
\emph{non-compact} Stiefel manifold 
\[
W_{n+1,2}\;\letbe\;
\left\{(w\: x)\,:\,\|w\|=\|x\|=1,\;|w^\star x|<1\right\}
\]
of normalised $2$-frames over $\bK$, because $u_h^\star v_h=h$ 
for any $(u\:v)$ in $V_{n+1,2}$. 
\end{defns}

The right action of $P^d$ on the subspace $V_{n+1,2}$ extends to 
$W_{n+1,2}$ by
\begin{equation}\label{ndactsonvt}
(w\:x)\cdot\tau\;=\;(x\:w)\spandsp(w\:x)\cdot
\begin{pmatrix}a&0\\0&b\end{pmatrix}\;=\;(wa\;xb),
\end{equation}
and the orbit space $W_{n+1,2}/P^d$ may be identified with the
subspace $\sptn\setminus\varDelta_n$ of $\sptn$. As such, it is an
open neighbourhood of $\varGamma_n$.

\begin{thm}
For any $n\geq 1$ (including $\infty$), there is a homeomorphism
\[
m\colon(\opD(\theta_n)\om,\om\varGamma_n)\;\lra\;
(\sptn\setminus\Delta_n\om,\om\varGamma_n)
\]
of pairs, which induces a deformation retraction of 
$\sptn\setminus\Delta_n$ onto $\varGamma_n$.
\end{thm}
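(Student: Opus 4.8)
The plan is to construct the map $m$ explicitly using the symmetric orthogonalisation process that is already implicit in the definition of the matrices $M_h$. Recall that a point of $\opD(\theta_n) = V_2\times_{P^d}\opd{d}$ is represented by a pair $\big((u\:v),h\big)$ with $(u\:v)\in V_{n+1,2}$ and $h\in\opd{d}\subset\bK$, modulo the $P^d$-action combining \eqref{ndactsonvt} on frames with $\tau\cdot h=\bar h$ and $\left(\begin{smallmatrix}a&0\\0&b\end{smallmatrix}\right)\cdot h = ah\bar b$. I would send such a representative to the unordered pair $\{[u_h],[v_h]\}$, where $(u_h\:v_h) = (u\:v)M_h\in W_{n+1,2}$; since $u_h^\star v_h = h$, the value of $\epsilon$ at this pair is $|h|<1$, so the image lies in $\sptn\setminus\varDelta_n$, and it equals $\varGamma_n$ precisely when $h=0$. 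The first step is to check that this is well-defined on $P^d$-orbits: the key identities are $M_{\bar h} = \tau M_h\tau$ (conjugation of the off-diagonal entry, which matches swapping columns) and $\left(\begin{smallmatrix}a&0\\0&b\end{smallmatrix}\right)M_h = M_{ah\bar b}\left(\begin{smallmatrix}a&0\\0&b\end{smallmatrix}\right)$ when $|ah\bar b|=|h|$ (so $R$ is unchanged), which one verifies by direct matrix multiplication; these exactly intertwine the $P^d$-action on $V_2\times\opd d$ with the $P^d$-action on $W_{n+1,2}$ whose quotient is $\sptn\setminus\varDelta_n$.

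Next I would show $m$ is a homeomorphism of pairs by producing the inverse. Given $(w\:x)\in W_{n+1,2}$, set $h=w^\star x$, which satisfies $|h|<1$; I claim $(w\:x)M_h^{-1}$ lies in the compact Stiefel manifold $V_{n+1,2}$. This is the Löwdin symmetric orthogonalisation: the Gram matrix of $(w\:x)$ is $\left(\begin{smallmatrix}1&h\\\bar h&1\end{smallmatrix}\right) = M_h^2$ by \eqref{mhmhinv}, so $(w\:x)M_h^{-1}$ has Gram matrix $M_h^{-1}M_h^2 M_h^{-1} = I$, i.e.\ orthonormal columns. Hence $(w\:x)\mapsto\big((w\:x)M_h^{-1},h\big)$ descends to a continuous inverse to $m$ on orbit spaces, using that $h\mapsto M_h$ and $h\mapsto M_h^{-1}$ are continuous on $\opd d$ (the formulas for $R$ and for $(1-|h|^2)^{-1/2}$ are continuous there, with no singularity since $|h|<1$ strictly). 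Both $m$ and its inverse visibly restrict to the identity on $\varGamma_n$ (the case $h=0$, $M_0 = I$), giving the homeomorphism of pairs.

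Finally, the deformation retraction of $\sptn\setminus\varDelta_n$ onto $\varGamma_n$ is transported across $m$ from the obvious linear retraction of the open disc bundle $\opD(\theta_n)$ onto its zero-section: the homotopy $\big((u\:v),h\big)\mapsto\big((u\:v),(1-t)h\big)$ is $P^d$-equivariant (since the $P^d$-action on $\opd d$ is linear, scaling commutes with it), hence descends, and under $m$ becomes an explicit isotopy shrinking $\{[u_h],[v_h]\}$ toward the orthogonal pair $\{[u],[v]\}$ through $\{[u_{(1-t)h}],[v_{(1-t)h}]\}$. I expect the main obstacle to be the bookkeeping in the well-definedness step — verifying that the two $P^d$-actions really are intertwined requires being careful about left versus right multiplication and about the conjugation conventions (scalars acting on the right, $ak\bar b$ versus $\bar a k b$), and one must confirm that $R=R(|h|)$ is genuinely invariant under the diagonal $S^{d-1}\times S^{d-1}$ action, i.e.\ that $|ah\bar b|=|h|$, which is where the norm-preserving property of unit quaternions/complex numbers is used. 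The $n=\infty$ case follows since everything is compatible with the colimit maps $V_{n+1,2}\hookrightarrow V_{n+2,2}$, as already noted after Definition \ref{defgamman}.
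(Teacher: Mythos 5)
Your proposal is correct and follows essentially the same route as the paper: it defines the map by right multiplication by $M_h$, checks $P^d$-equivariance via the same commutation identities (yours is the paper's identity $M_h\left(\begin{smallmatrix}a&0\\0&b\end{smallmatrix}\right)=\left(\begin{smallmatrix}a&0\\0&b\end{smallmatrix}\right)M_{\bar{a}hb}$ after renaming $a\mapsto\bar a$, $b\mapsto\bar b$), exhibits the inverse via L\"owdin orthogonalisation $(w\:x)\mapsto((w\:x)M_k^{-1},k)$, and obtains the retraction by conjugating fibrewise scaling through the homeomorphism. Your Gram-matrix verification that $(w\:x)M_h^{-1}$ lands in $V_{n+1,2}$ is a slight elaboration of what the paper states implicitly via \eqref{mhmhinv}.
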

\begin{proof}
The formula $f'((w\:x),\om h)\letbe(w_h\;x_h)$ defines a function
\[
f'\colon V_{n+1,2}\2\times\2\opd{d}\lra W_{n+1,2},
\]
which is continuous because $M_h$ varies continuously with $h$. The 
equations 
\[
M_h\om\tau\;=\;\tau\om M_{\bar{h}}\spandsp
M_h\begin{pmatrix}a&0\\0&b\end{pmatrix}\;=\;
\begin{pmatrix}a&0\\0&b\end{pmatrix}M_{\bar{a}hb}
\]
hold for all $a$, $b$ in $S^{d-1}$, so $f'$ is equivariant with respect to 
the free $P^d$-actions of \eqref{saddad} on $V_{n+1,2}\2\times\2\opd{d}$ 
and \eqref{ndactsonvt} on $W_{n+1,2}$. Moreover,{}   
\smash{$f''(w\:x)\letbe((w\:x)\om M_k^{-1};k)$} defines an 
equivariant inverse, where $k\letbe w^\star\pfm x$ satisfies $|k|<1$. So 
$f'$ induces a homeomorphism 
\smash{$f\colon\opD(\theta_n)\to\sptn\setminus\Delta_n$}
of $P^d$-orbit spaces. It also acts as the identity on 
the subspace of elements $((w\;x),0)$, which descends to the
zero section $\varGamma_n$ of $\opD(\theta_n)$. 

Since \smash{$|u_h^\star v_h|=t$} if and only if
$|h|=t$, the required retraction is defined by $l_t\letbe f\scirc
t\scirc f^{-1}$, where $t$ denotes fibrewise multiplication by
$t$ in $\opD(\theta_n)$ for $0\leq t\leq 1$; in particular, 
$l_0=s_n$. 

For both statements, the case $n=\infty$ is obtained by taking colimits.
\end{proof}
\begin{rem}
The homeomorphism $f''$ exemplifies L\"{o}wdin's \emph{symmetric
  orthogonalisation} procedure \cite{low:npc}, which originally arose
in the literature of quantum chemistry, albeit over $\bC$. There seem
to be no explicit references over $\bH$, but the proof remains valid
because quaternionic matrices have polar forms \cite{zha:qmq}.
For any normalised $2$-frame $(w\:x)$, where $w^\star x=k$ and 
$|k|<1$, the $2$-frame
\[
(w^k\:x^k)\;\letbe\;(w\:x)\om M_k^{-1}
\]
is orthonormal, and its construction is invariant with respect to
interchanging vectors in each frame. The procedure works 
because $M_k$ is the unique positive definite square root of 
$(w\:x)^\star(w\:x)$, as confirmed by \eqref{mhmhinv}.
\end{rem}
\begin{cor}\label{dadhalg}
For any $n\geq 1$, the configuration space $\mathcal{C}_2(\KPn)$ is 
homeomorphic to $\opD(\theta_n)$, and homotopy equivalent to 
$\varGamma_n$.\hfill$\Box$
\end{cor}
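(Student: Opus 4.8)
The plan is to read off both assertions of Corollary \ref{dadhalg} as immediate consequences of the theorem just proved together with the identifications set up in \S\ref{in}. First I would recall that, by definition, the configuration space $\mathcal{C}_2(\KPn)=\spt(\KPn)\setminus\varDelta=\sptn\setminus\varDelta_n$, where the last equality is just the abbreviation $\sptn=\sptKPn$ and $\varDelta_n=\varDelta(\KPn)$ fixed earlier in this section. The theorem supplies a homeomorphism of pairs $m\colon(\opD(\theta_n),\varGamma_n)\to(\sptn\setminus\varDelta_n,\varGamma_n)$; forgetting the subspace $\varGamma_n$, the underlying map $m\colon\opD(\theta_n)\to\sptn\setminus\varDelta_n=\mathcal{C}_2(\KPn)$ is the asserted homeomorphism. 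That disposes of the first claim with essentially no further work.

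For the second claim I would invoke the deformation retraction, also furnished by the theorem, of $\sptn\setminus\varDelta_n$ onto $\varGamma_n$; since a deformation retraction is in particular a homotopy equivalence, $\mathcal{C}_2(\KPn)=\sptn\setminus\varDelta_n\simeq\varGamma_n$. Alternatively, transporting along the homeomorphism $m$, one sees that $\opD(\theta_n)$ deformation retracts onto its zero-section $\varGamma_n$ via the maps $l_t$ of the proof (with $l_0=s_n$ the bundle projection and $l_1$ the identity), which is the standard fact that an open disc bundle is homotopy equivalent to its base. Either route gives the stated homotopy equivalence, and the two are compatible because $m$ carries $\varGamma_n$ to $\varGamma_n$ by construction.

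Since there is really nothing to obstruct here — the corollary is a repackaging of the theorem plus the definition of $\mathcal{C}_2(\KPn)$ — the only point requiring a moment's care is bookkeeping: making sure that the copy of $\varGamma_n$ inside $\opD(\theta_n)$ (the zero-section) is matched with the copy of $\varGamma_n$ inside $\sptn\setminus\varDelta_n$ (the subspace of Definition \ref{defgamman}) under $m$, so that "deformation retract onto $\varGamma_n$" means the same thing on both sides. This is exactly what the ``of pairs'' clause in the theorem records, and the proof's remark that $f$ ``acts as the identity on the subspace of elements $((w\;x),0)$, which descends to the zero section $\varGamma_n$'' confirms it. Accordingly the proof is a one- or two-line deduction, and the $\Box$ already placed at the end of the statement is appropriate.
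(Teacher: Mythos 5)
Your argument is correct and is exactly what the paper intends: the corollary is a restatement of the preceding theorem once one unwinds the definition $\mathcal{C}_2(\KPn)=\sptn\setminus\varDelta_n$ from \S\ref{in}, and the paper accordingly gives no separate proof beyond the $\Box$. Your careful note that the pair structure matches the two copies of $\varGamma_n$ is a sensible bit of bookkeeping but adds nothing beyond what the theorem already records.
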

Since the quotient space $\sptn/\varDelta_n$ is homeomorphic to the
one-point compactification of $\mathcal{C}_2(\KPn)$, Corollary 
\ref{dadhalg} shows that both may be identified with the Thom space
$\Th(\theta_n)$, for any $n\geq 1$.

Now recall the canonical projection $\pi\colon B_n\to\sptn$ of
\eqref{canproj}, where $B_n$ denotes the Borel construction
$S^\infty\2\times_{C_2}(\KPn\?\times\KPn)$. Following
\eqref{pinodiag}, its restriction to the diagonal
\smash{$\idehat{\varDelta}_n$} is the projection
$\pi_2\colon\RP^\infty\?\times\KP^n\to\KPn$ onto the second
factor. For convenience, $\RP^\infty\?\times\KP^n$ will usually be
abbreviated to $\RKn$.
\begin{prop}
For every $n\geq 1$, the map $\pi$ induces a commutative ladder
\begin{equation}\label{thomlad}
\begin{CD}
\RKn@>{i_n}>>B_n@>{b_n}>>\Th(\theta_n)@>>>\!\dots\\[-.1em]
@V{\pi_2}VV@VV{\pi}V@VV{id}V\\[-.1em]
\KPn@>{i_\varDelta}>>\sptn@>{b_\varDelta}>>\Th(\theta_n)@>{}>>\!\dots
\end{CD}\quad
\end{equation}
of homotopy cofibre sequences.
\end{prop}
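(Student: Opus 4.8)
The plan is to exhibit \eqref{thomlad} as the comparison of two Puppe cofibre sequences, namely those of the pairs $(\sptn,\varDelta_n)$ and $(B_n,\idehat{\varDelta}_n)$, glued together along the map $\pi$ of \eqref{canproj} and the identifications $\sptn/\varDelta_n\cong\Th(\theta_n)$ and $B_n/\idehat{\varDelta}_n\simeq\Th(\theta_n)$. First I would note that both $\varDelta_n\hookrightarrow\sptn$ and $\idehat{\varDelta}_n\hookrightarrow B_n$ are closed cofibrations: they are CW pairs, and in any case $\varDelta_n$ carries the open collar of \cite[Corollary 4.2]{kata:gfg}. Hence each inclusion extends to a Puppe sequence whose third term is the quotient space. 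Since $\sptn/\varDelta_n$ is the one-point compactification of $\mathcal{C}_2(\KPn)$, Corollary \ref{dadhalg} and the discussion after it identify it with $\Th(\theta_n)$; taking $b_\varDelta$ to be the collapse $\sptn\to\sptn/\varDelta_n$ followed by this homeomorphism makes the bottom row of \eqref{thomlad} a cofibre sequence.

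Next I would analyse $\pi$ along the diagonal. By \eqref{canproj}--\eqref{pinodiag} one has $\pi^{-1}(\varDelta_n)=\idehat{\varDelta}_n$, and, as recorded just before the statement, $\idehat{\varDelta}_n=\RKn$ with $\pi|_{\idehat{\varDelta}_n}$ equal to the projection $\pi_2\colon\RKn\to\KPn=\varDelta_n$. Thus $\pi$ is a map of pairs $(B_n,\idehat{\varDelta}_n)\to(\sptn,\varDelta_n)$; the left-hand square of \eqref{thomlad} commutes on the nose, and on cofibres $\pi$ induces exactly the map $\pi''$ of \eqref{pipp}. Setting $b_n\letbe b_\varDelta\scirc\pi$, the collapse $B_n\to B_n/\idehat{\varDelta}_n$, the map $\pi''$, and the homeomorphism $\sptn/\varDelta_n\cong\Th(\theta_n)$ compose to give $b_n$, so the right-hand square also commutes strictly, with $id$ on the right.

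Finally, the discussion surrounding \eqref{pipp} shows that the hypotheses on $\KPn$ force $\pi''$ to be a homotopy equivalence. Transporting the top Puppe sequence across $\pi''$ therefore replaces its cofibre $B_n/\idehat{\varDelta}_n$ by $\Th(\theta_n)$ and leaves the cofibre sequence $\RKn\xrightarrow{i_n}B_n\xrightarrow{b_n}\Th(\theta_n)\to\cdots$ of the top row. Commutativity of all the remaining squares, including the comparison of connecting maps by $\Sigma\pi_2$, then follows from the naturality of the Puppe construction applied to the map of pairs $\pi$; and the case $n=\infty$ is obtained by passing to colimits, as in Corollary \ref{dadhalg}. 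The only non-formal point is precisely the claim that $b_n=b_\varDelta\scirc\pi$ serves, up to homotopy, as a cofibre of $i_n$, and this is exactly where the homotopy equivalence $\pi''$ — hence the evenness of $\KPn$ — enters; everything else is bookkeeping with cofibre sequences.
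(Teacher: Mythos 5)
Your proof is correct and follows essentially the same route as the paper: both establish that $\pi$ is a map of CW pairs inducing a map of Puppe cofibre sequences, use that $\pi''$ is a homotopy equivalence (a consequence of the off-diagonal homotopy equivalence $\pi'$ of \eqref{pinodiag}), and then define $b_n$ so that the right square commutes — your $b_n := b_\varDelta\scirc\pi$ coincides with the paper's $b_n := \pi''\scirc q_n$ after the identification $\sptn/\varDelta_n\cong\Th(\theta_n)$, since $q_\varDelta\scirc\pi=\pi''\scirc q_n$. The only cosmetic difference is that you spell out the cofibration/collar considerations and the transport of the top row across $\pi''$ in more detail than the paper does.
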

\begin{proof}
Both $i_n$ and $i_\varDelta$ are CW inclusions, and  $\pi$ is an 
off-diagonal homotopy equivalence, as noted in \eqref{pinodiag}. It 
therefore induces a map 
\begin{equation}\label{cofibdiag}
\begin{CD}
\RKn@>{i_n}>>B_n\!@>{q_n}>>\!\!B_n/\idehat{\varDelta}_n
\!\!@>>>\!\dots\\[-.1em]
@V{\pi\smash{_2}}VV\;@VV{\pi}V\!\!@VV{\pi\smash{''}}V\\[-.1em]
\KPn@>{i_\varDelta}>>\sptn@>{q_\varDelta}>>\!\sptn/\varDelta_n
\!\!@>>>\!\dots
\end{CD}
\end{equation}
of cofibre sequences, where $\pi''$ is a homotopy equivalence. To
complete the proof, define $b_n$ to be the composition $\pi''\scirc q_n$.
\end{proof}
\begin{rem}
The lower row of \eqref{thomlad} is our promised analogue of 
\eqref{jathtowhcs}.
\end{rem}
The corresponding ladder of colimits is given by
\begin{equation}\label{cofibsi}
\quad\;
\begin{CD}
\RKI@>{i}>>B@>{b}>>\MPd@>>>\!\dots\\[-.1em]
@V{\pi_2}VV@VV{\pi}V\,@VV{id_{\phantom{\varGamma}}}V\\[-.1em]
\KPI@>{i_\varDelta}>>\?\!\spt\!\?@>{b_\varDelta}>>\MPd@>{}>>\!\dots
\end{CD}\;\;,
\end{equation}
where $\MPd$ (more commonly written as $M\?Pin^\ddagger(d)$) denotes
the Thom space of the $\Pindd$ bundle $\theta$. Ladder \eqref{cofibsi}
also commutes, and its rows are cofibre sequences; it is the primary
input for the cohomology calculations of the remaining sections. If so
preferred, the crucial properties of \eqref{cofibdiag} may be deduced
directly from Bredon's results \cite[Chapter VII]{bre:ict}, using
\v{C}ech cohomology.

The upper row of \eqref{cofibsi} may be replaced by the homotopy cofibre 
sequence
\begin{equation}\label{bfbpmp}
\BFd\stackrel{p_3}{\llra}\BPd
\stackrel{j}{\llra}\MPd\llra\dots\,,
\end{equation}
where $\MPd$ is homeomorphic to the mapping cone of $p_3$, and $j$ 
denotes the inclusion of the zero section. The resulting ladder
is homotopy commutative, because there are homotopy equivalences $h_1$
and $h_2$ for which the square
\begin{equation}\label{h1h2sq}
\begin{CD}
\BFd@>{p_3}>>\BPd\\[-.1em]
\!@V{h_1}VV\;@VV{h_2}V\\[-.1em]
\RKI\!@>{i}>>B
\end{CD}
\end{equation}
is homotopy commutative. The existence of $h_1$ and $h_2$ involves 
standard manipulations with models for classifying spaces, following 
\cite{mit:npb}, for example. Similar arguments with $h_2$ lead to a 
homotopy equivalence between fibrations
\begin{equation}\label{clp4borbun}
B(S^3\1\times\2 S^3)\stackrel{p_4}{\lra}\BPd\stackrel{}{\lra}\RPI
\!\sands
\KPI\?\times\KPI\stackrel{k}{\lra}B\stackrel{}{\lra}\RPI\,;
\end{equation}
the former arises by classifying \eqref{fibrns}(4) and the latter
is the Borel bundle \eqref{borbun}.

%
%
%
%
%
%
%
%
%

\section{Characteristic classes}\label{chcl}

In this section, characteristic classes are determined for various of
the vector bundles introduced above. The results are expressed in the
notation of \S \ref{clsppibu}, and play an important part in the final
calculations.

The integral cohomology rings  
\begin{equation}\label{defzlolt}
H^*(BO_\bK(1)_\ssp)\;\cong\;\bZ[z]\spandsp 
H^*(BO_\bK(2)_\ssp)\;\cong\;\bZ[l_1,l_2]
\end{equation}
are standard, as are the properties of their generators. In
particular, $z$ has dimension $d$, and is the 1st Chern class
$c_1(\zeta)$ or the 1st symplectic Pontryagin class $p_1(\xi)$;
similarly, $l_1$ and $l_2$ have dimensions $d$ and $2d$, and are the
1st and 2nd Chern classes of $\zeta_2$ or the 1st and 2nd symplectic
Pontryagin classes of $\xi_2$ respectively. Tensoring with $\zmt$
yields the corresponding rings with mod $2$ coefficients, so $z$,
$l_1$ and $l_2$ may be confused with their mod $2$ reductions by 
allowing the context to distinguish between them. With this convention, 
the non-zero Stiefel-Whitney classes of the stated bundles are given by
\[
w_d(\omega)\;=\;z,\quad w_d(\omega_2)\;=\;l_1,\quad 
w_{2d}(\omega_2)\;=\;l_2\sands w_d(\chi)=l_1
\] 
in $H^*(BO_\bK(1);\zmt)$ and $H^*(BO_\bK(2);\zmt)$ respectively.

Our calculations also involve $\RPI$, and its tautological
real line bundle $\eta$. The integral and mod $2$ cohomology rings are
given by
\begin{equation}\label{defca}
H^*(\RPIssp)\;\cong\;\bZ[c]/(2c)\spandsp 
H^*(\RPIssp;\zmt)\;\cong\;\zmt\om[a],
\end{equation}
where $c=c_1(\eta_\bC)$ has dimension $2$ and $a=w_1(\eta)$ has
dimension $1$. In this case, reduction mod $2$ satisfies
$\rho(c)=a^2$. Because $H^*(\KPI)$ is torsion free, it follows from
the K\"unneth formula that the integral and mod $2$ cohomology rings
of $\RKI$ are given by
\begin{equation}\label{cohrki}
H^*(\RKIssp)\;\cong\;\bZ[c,z]/(2c)\spandsp 
H^*(\RKIssp;\zmt)\;\cong\;\zmt\om[a,z],
\end{equation}
where $a$, $c$, and $z$ are pullbacks of their namesakes along the 
projections.
 
The cohomology rings of $\KPn$, $Gr_{n+1,2}$, $\RPn$, and 
$\RPI\?\times\KPn$ are obtained from \eqref{defzlolt}, \eqref{defca}, 
and \eqref{cohrki} by appropriate truncation.

Remarks \ref{remcosetsps} identify the epimorphism $P^d\ra C_2$ with
$\det$, and Remarks \ref{splitlam} show that the line bundle $\lambda$
is classified by the induced map
\[
B\?\det\colon\BPd\lra\RPI,
\] 
which will also be denoted by $\lambda$. In particular, $w_1(\lambda)$
is given by $\lambda^*(a)$ in $H^1(\BPd;\zmt)$, and $c_1(\lambda_\bC)$
by $\lambda^*(c)$ in $H^2(\BPd)$. By definition, $\lambda$ restricts
to $\eta_d$ over the fibre $\bR P^d$ of Corollary \ref{fibrns}(2), so
$i_2^*(w_1(\lambda))=a$ in $H^1(\bR P^d;\zmt)$ and
$i_2^*(c_1(\lambda_\bC))=c$ in $H^2(\bR P^d)$. Henceforth,
$w_1(\lambda)$ and $c_1(\lambda_\bC)$ are written as $a$ and $c$
respectively.
\begin{rem}\label{acnonnilp}
The epimorphism $\det$ restricts to the identity on the subgroup
$C_2<P^d$, so $H^*(\RPI;R)$ is a summand of $H^*(\BPd;R)$ for 
coefficients $R=\bZ$ or $\zmt$. All powers of the elements $a$ and $c$ 
are therefore non-zero.
\end{rem}  

Now consider the pullback of $\omega_2$ along the projection
$p_2\colon \BPd\to BO_\bK(2)$ of Corollary \ref{fibrns}(2), and its
characteristic classes $x\letbe p_2^*(l_1)$ and $y\letbe p_2^*(l_2)$.
Equating the total Stiefel-Whitney classes of \eqref{pindetsplit}
gives
\[
1+x\;=\;(1+a)\,w(\theta).
\]
in $H^*(\BPd;\zmt)$. Since $w_i(\theta)=0$ for $i>d$, it follows that
\begin{equation}\label{stwhthetad}
w(\theta)\;=\;1+a+\dots+a^d+x,
\end{equation}
and hence that $a^{d+j}=a^jx$ in $H^{d+j}(\BPd;\zmt)$ for every 
$j\geq 1$. 
\begin{rems} 
These formulae highlight the importance of the integral cohomology
class $m\letbe c^{d/2}+x$, whose mod $2$ reduction $a^d+x$ will also
be written as $m$.  Then equation \eqref{stwhthetad} confirms that
$w_2(\theta)=m$ is the reduction of an integral class when $d=2$, and
that $w_2(\theta)=w_1^2(\theta)$ (because both are equal to $a^2$)
when $d=4$. These are defining properties for $\Pindd$-bundles
\cite[\S 2]{hakrte:nfm}.
\end{rems}

The map $p_2$ imposes an $H^*(BO_\bK(2)_\ssp;R)$-algebra structure on
$H^*(\BPdssp;R)$. For $R=\zmt$, the Leray-Hirsch theorem proves that
$p_2^*$ injects $\zmt[l_1,l_2]$ as a direct summand \cite{hus:fb}, and
that $H^*(\BPdssp;\zmt)$ has basis $1$, $a$, \dots, $a^d$ thereover.
Combined with \eqref{stwhthetad}, this identifies $H^*(\BPdssp;\zmt)$
as the $\zmt$ algebra
\begin{equation}\label{isobst}
G^*\om\letbe\;\zmt\om[\om a,m,y]/(am).
\end{equation}

The integral characteristic classes of $\Pindd$-bundles are equally 
important.
\begin{thm}\label{cohbpd}
The integral cohomology ring $H^*(\BPdssp)$ is isomorphic to
\[
Z^*\;\om\text{\rm $\letbe$}\;\;\bZ\om[\om c,m,y]/(2c,cm).
\]
\end{thm}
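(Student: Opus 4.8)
The plan is to compute $H^*(\BPdssp)$ by comparing it with the already-understood mod $2$ cohomology ring $G^*$ of \eqref{isobst}, using the Bockstein spectral sequence together with the direct-sum decomposition coming from the retraction $C_2\hookrightarrow P^d\to C_2$ noted in Remark \ref{acnonnilp}. First I would record what the integral-to-mod-$2$ reduction map $\rho$ does on the candidate generators: $\rho(c)=a^2$, $\rho(m)=m$ (this is the defining property of $m$ noted just before the theorem), and $\rho(y)=y$. This shows that the ring homomorphism $Z^*\to G^*$ sending $c\mapsto a^2$, $m\mapsto m$, $y\mapsto y$ is well-defined (the relations $2c\mapsto 0$ and $cm\mapsto a^2m=a\cdot am=0$ are respected), and its image is the subring $\zmt[a^2,m,y]/(a^2m)$, which together with multiplication by $a$ spans all of $G^*$; i.e. $G^*$ is free of rank $2$ over the image of $Z^*$ with basis $\{1,a\}$. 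The point of this bookkeeping is that it pins down the additive structure: in each degree, $G^*$ is $\zmt$ of rank equal to (rank of image in that degree) plus (rank of image one degree lower).

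Next I would run the Bockstein argument. By Leray--Hirsch applied to the bundle $\BPd\to BO_\bK(2)$, as already used to derive \eqref{isobst}, the mod $2$ cohomology of $\BPd$ is a free $\zmt[l_1,l_2]$-module on $1,a,\dots,a^d$; in particular $H^*(\BPd;\zmt)$ is finite-dimensional in each degree, so the integral cohomology is finitely generated in each degree and the Bockstein spectral sequence converges to (the mod $2$ reduction of) the torsion-free quotient. The first Bockstein $\beta=\Sq^1$ on $G^*$ is computed from $\beta(a)=a^2$, $\beta(m)=0$ (since $m$ lifts integrally), $\beta(y)=0$, and the Leibniz rule; one finds $\beta(a^{2k})=0$, $\beta(a^{2k+1})=a^{2k+2}$, so the $E_2$-page of the Bockstein SS is exactly $\zmt[a^2,m,y]/(a^2m)$ — which is precisely the image of $\rho$ computed above — and the spectral sequence degenerates there (all remaining classes are permanent cycles, being reductions of the integral classes $c^k$, $c^km^\epsilon y^j$). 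Hence the integral cohomology has no higher $2$-torsion: all $2$-torsion has order exactly $2$, and the torsion-free quotient $H^*(\BPd)/\Tor$ reduces mod $2$ onto $\zmt[a^2,m,y]/(a^2m)$.

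Now I would assemble the ring. From Remark \ref{acnonnilp}, $H^*(\RPIssp)\cong\bZ[c]/(2c)$ is a direct summand of $H^*(\BPdssp)$, so the classes $c^k$ are nonzero $2$-torsion for all $k\geq 1$ and $c$ is the unique (up to the summand) source of $2$-torsion. Combined with the Bockstein computation — which shows the torsion-free part is exactly $\bZ[c^?,m,y]$-shaped with $c^2,m,y$ polynomial generators and the torsion part is the $\zmt[\dots]$-span of $c,cm,\dots$ — I would match degrees and Poincaré series against $G^*$ via the rank-$2$-over-image statement from the first paragraph to conclude that the natural map $Z^*=\bZ[c,m,y]/(2c,cm)\to H^*(\BPdssp)$ is an isomorphism: it is surjective because $c,m,y$ and their products generate (the torsion-free classes lift the polynomial generators of the Bockstein $E_\infty$-page, and $c$ generates the $\RPI$-summand carrying all torsion), and injective because source and target have equal finite rank in each degree after tensoring with $\zmt$ (using that $2c=0$ forces $2$-torsion to be elementary, already established) — so an iso mod $2$ plus matching torsion-free ranks gives an iso over $\bZ$.

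I expect the main obstacle to be the injectivity/counting step: one must be careful that $Z^*$ has \emph{exactly} the right size in every degree, i.e. that there are no unexpected relations among $c,m,y$ beyond $2c=0$ and $cm=0$, and no extra classes in $H^*(\BPd)$ beyond those named. This is handled by the Leray--Hirsch rank count (rank $d+1$ over $\zmt[l_1,l_2]$) reconciled with the monomial count in $G^*=\zmt[a,m,y]/(am)$, and then transported to the integral statement via the Bockstein degeneration; the delicate point is checking that the relation $cm=0$ (rather than merely $2cm=0$ or nothing) is forced — this comes from $\rho(cm)=a^2m=a(am)=0$ together with $cm$ being $2$-torsion and the observation that the only $2$-torsion classes in that degree already lie in the $\bZ[c]/(2c)$-summand, on which $m$ acts as zero by the summand structure. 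Once that is pinned down, the ring presentation $Z^*$ follows.
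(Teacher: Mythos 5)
Your strategy — control the $2$-torsion via the Bockstein spectral sequence, use the retraction $BC_2\to\BPd\to BC_2$ of Remark \ref{acnonnilp} to produce the $\bZ[c]/(2c)$-summand, and reconcile with $G^*$ by counting — is genuinely different from the paper's, which instead runs the Leray–Serre spectral sequence of $p_2\colon\BPd\to BO_{\bK}(2)$ (fibre $\bR P^d$), observes that it collapses for parity reasons since both base and fibre cohomologies are concentrated in even degrees, reads off $H^*(\BPd)$ as a free $H^*(BO_\bK(2))$-module on $1,c,\dots,c^{d/2}$, and then pins the one relation $cm=0$ using $c^{d/2+1}=cx$ together with Remark \ref{acnonnilp}. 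Your route could in principle work, but as written it contains errors that break the argument.

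First, the claim that $G^*=\zmt[a,m,y]/(am)$ is free of rank $2$ over the image of $Z^*\to G^*$ on basis $\{1,a\}$ is false: multiplication by $a$ annihilates $m$, so for instance (taking $d=4$) the image in degree $4$ has rank $2$ (spanned by $a^4,m$) while $G^5$ has rank $1$ (only $a^5$). The resulting ``rank in degree $n$ equals rank of image in $n$ plus rank of image in $n-1$'' bookkeeping therefore fails, and the Poincaré-series comparison you invoke at the end inherits this defect. (Relatedly, ``$Z^*$ and $H^*(\BPd)$ have equal rank after tensoring with $\zmt$'' is not the right test: one must also include the $\Tor$ term, since $H^*(\BPd;\zmt)\cong(H^*(\BPd)\otimes\zmt)\oplus\Tor(H^{*+1}(\BPd),\zmt)$.)

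Second, and more seriously, the Bockstein $E_2$-page is miscomputed. You correctly identify $\ker\beta=\zmt[a^2,m,y]/(a^2m)$ as the image of $\rho$, but the $E_2$-page of the Bockstein spectral sequence is $\ker\beta/\operatorname{im}\beta$, not $\ker\beta$. Since $\beta(a^{2k-1})=a^{2k}$, every class $a^{2k}$ with $k\geq 1$ is a boundary and dies; the correct page is $E_2\cong\zmt[m,y]$, which is the mod $2$ reduction of the torsion-free quotient $\bZ[m,y]$. Your claim that ``the torsion-free quotient reduces mod $2$ onto $\zmt[a^2,m,y]/(a^2m)$'' is therefore wrong — that ring is $\operatorname{im}\rho$ on \emph{all} of $H^*(\BPd)$, torsion included, and the $a^{2k}$ in it come precisely from the torsion classes $c^k$. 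Conflating $\operatorname{im}\rho$ with $E_\infty$ of the Bockstein spectral sequence is a conceptual error, and it feeds directly into your subsequent rank matching.

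Finally, the argument for $cm=0$ needs sharpening. It is not automatic that ``$m$ acts as zero on the $\bZ[c]/(2c)$-summand'': what one can say, after checking $e^*(m)=2c^{d/2}=0$ for the retraction $e\colon BC_2\to\BPd$, is that $cm$ lies in $\ker(e^*)$ and is $2$-torsion (since $2c=0$); to conclude $cm=0$ one must also show that $\ker(e^*)$ is torsion-free in degree $d+2$, which requires first establishing that \emph{all} $2$-torsion of $H^{d+2}(\BPd)$ lies in the $\bZ[c]/(2c)$-summand. That step is not a formal consequence of the summand structure — it is exactly the additive input that a corrected Bockstein/rank count would have to supply, so the argument as written is circular at this point. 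The paper sidesteps all of this by getting the additive and module structure for free from the collapsing spectral sequence, so that the only remaining issue is the single relation, dispatched by nonvanishing of $c^{d/2+1}$.
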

\begin{proof}
Consider the Leray-Serre spectral sequence
\[
E_2^{p,q}\;\letbe\; H^p(BO_\bK(2)_\ssp;H^q(\bR P^d_\ssp))
\;\Longrightarrow\; H^{p+q}(\BPdssp)
\]
of the fibration $p_2$, noting that $BO_\bK(2)$ is simply-connected. 
Since $H^*(BO_\bK(2))$ is free abelian and even dimensional, there are 
isomorphisms
\[
E_\infty^{*,*}\;\cong\;E_2^{*,*}\;\cong\; 
H^*(BO_\bK(2)_\ssp)\otimes H^*(\bR P^d_\ssp);
\]
thus $p_2^*$ is monic, and $i_2^*(c)=c$ in $H^2(\RP^d_\ssp)$. So any
additive generator $x^iy^j\otimes c^k$ of $E_\infty^{*,*}$ is
represented by $x^iy^jc^k$ in $H^*(\BPdssp)$, and there is
an isomorphism $E_\infty^{*,*}\cong H^*(\BPdssp)$ of
$H^*(BO_\bK(2)_\ssp)_+$-modules, on generators $1$, $c$, $\dots$,
\smash{$c^{d/2}$}.

The multiplicative structure is described by the single relation
$c^{d/2+1}=cx$, or $cm=0$, which follows from Remark \ref{acnonnilp}
and the fact that $cx\neq 0$.
\end{proof}
\begin{rems}\label{importacxy}
The homotopy commutative square \eqref{h1h2sq} may be used to 
transport the cohomology classes $c,\,z$, and $a$ to $H^*(\BFd)$, 
and $c,\,a,\,x$ and $y$ to $H^*(B)$ (surpressing $h_1^*$ and 
$h_2^*$ from the notation). The homotopy equivalence of 
\eqref{clp4borbun} ensures that $x$ and $y$ are then characterised by 
the fact that they satisfy
\[
k^*(x)=z_1+z_2\sands k^*(y)=z_1z_2
\]
in $H^*(\KPI\?\times\KPI)$, and are pullbacks from $H^*(\BO_{\bK}(2))$. 
These conventions lead to isomorphisms $H^*(\BFdssp)\cong\bZ[c,z]/(2c)$ 
and $H^*(B_{\ssp})\cong Z^*$. 

Similarly, bundles $\eta$ and $\omega$ are defined over $\BFd$ as 
pullbacks from $\RKI$.
\end{rems}

Now recall Corollary \ref{fibrns} and consider the bundle  
\[
O_\bK(2)/F^d\lra\BFd\stackrel{p_5}{\lra}BO_\bK(2),
\]
whose projection $p_5$ factorises as $p_2\scirc p_3$. The pullback 
$\gamma\letbe p_5^*(\omega_2)$ is a complex or quaternionic $2$-plane 
bundle, and is induced by a representation of $F^d$ on 
$\bK^2$, given by Definitions \ref{ndzmtssubgps}. In terms of the 
basis $\{(1,\pm1)^t\}$, the representation is equivalent to a sum 
$\alpha\oplus\beta$ of $1$-dimensionals, where 
\[
\alpha(\tau,a)\;=\,{}\cdot a\spandsp
\beta(\tau,a)\;=\,{}\cdot(-a)
\]
respectively, for any $a$ in $S^{d-1}$. In other words, there is an
isomorphism 
\begin{equation}\label{pot}
\gamma\;\cong\;
\omega\oplus(\eta\otimes_{\bR}\1\omega).
\end{equation}
\begin{lem}\label{chclpbsig} 
The characteristic classes of $\gamma$ are given by
$l_1(\gamma)=c^{d/2}+2z$ and $l_2(\gamma)=(c^{d/2}+z)z$ in
$H^*(\BFd)$, and $w_d(\gamma)=a^d$ and $w_{2d}(\gamma)=(a^d+z)z$ 
in $H^*(\BFd;\zmt)$. 
\end{lem}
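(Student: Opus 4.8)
The plan is to compute the characteristic classes of $\gamma$ directly from the splitting \eqref{pot}, namely $\gamma\cong\omega\oplus(\eta\otimes_\bR\omega)$, using the Whitney sum formula together with the known characteristic classes of $\omega$ and $\eta$ and the effect of tensoring a $\bK$-line bundle by a real line bundle. First I would record the inputs: over $\BFd$ the classes $z=c_1(\zeta)$ or $p_1(\xi)$ of $\omega$, and $a=w_1(\eta)$, $c=c_1(\eta_\bC)$ of $\eta$, satisfy the conventions fixed in Remarks~\ref{importacxy}, with $\rho(c)=a^2$ and $\rho(z)=w_d(\omega)$ (writing $\rho$ for mod~$2$ reduction). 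The key computation is to identify the characteristic class of $\eta\otimes_\bR\omega$: in the complex case this is the line bundle $\eta_\bC\otimes_\bC\zeta$, so $c_1=c+z$; in the quaternionic case $\eta\otimes_\bR\xi$ is the quaternionic line bundle obtained by twisting $\xi$ by the real line bundle $\eta$, and one checks $p_1=c^{d/2}+z$ — i.e. $p_1(\eta\otimes_\bR\xi)=c^2+z$ when $d=4$, since the real Pontryagin class of such a twist picks up the square of the complexified twisting class. Treating both cases uniformly with the notation $c^{d/2}$ (so $c^{d/2}=c$ when $d=2$), we get $l_1(\eta\otimes_\bR\omega)=c^{d/2}+z$ and $l_2(\eta\otimes_\bR\omega)=0$ (it is a line bundle).

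Next I would apply the Whitney formula to \eqref{pot}: since $\omega$ has $l_1=z$, $l_2=0$ and $\eta\otimes_\bR\omega$ has $l_1=c^{d/2}+z$, $l_2=0$, the total class is $(1+z)(1+c^{d/2}+z)$, giving
\[
l_1(\gamma)=z+(c^{d/2}+z)=c^{d/2}+2z
\spandsp
l_2(\gamma)=z\cdot(c^{d/2}+z)=(c^{d/2}+z)z,
\]
which is exactly the integral statement. For the mod~$2$ statement I would reduce: $\rho(c^{d/2})=a^d$ (using $\rho(c)=a^2$), while $\rho(l_1(\gamma))=a^d+2z\equiv a^d$ because the $2z$ term vanishes mod~$2$; this requires matching $a^d$ with the Stiefel--Whitney class picture, i.e.\ recalling from \S\ref{chcl} that $w_d(\omega)=z$ reduces consistently and that $w_d(\gamma)$ is the top Stiefel--Whitney class of the rank-$d$ real bundle underlying $\gamma$. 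One gets $w_d(\gamma)=\rho(l_1(\gamma))=a^d$ and $w_{2d}(\gamma)=\rho(l_2(\gamma))=(a^d+z)z$, completing the proof.

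\textbf{Main obstacle.} The only genuinely delicate point is the quaternionic twisting computation, $p_1(\eta\otimes_\bR\xi)=c^{d/2}+z$ — that is, verifying that twisting a quaternionic line bundle $\xi$ by a real line bundle $\eta$ changes the symplectic Pontryagin class by $c_1(\eta_\bC)^{?}$ with the right power and sign, rather than by $a^{\,4}$ or some other expression. Here one must be careful that $\eta\otimes_\bR\bH$ carries a quaternionic structure (since $\eta\otimes_\bR\bH\cong\overline{\eta\otimes_\bR\bH}$), so $\eta\otimes_\bR\xi$ really is a $\bK$-bundle of rank~$1$, and then compute its $p_1$ by restricting to a generating $\RPT$ (or by naturality from the universal case over $BO(1)\times BSp(1)$). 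Everything else — the Whitney sum formula, the mod~$2$ reductions, and the bookkeeping of $c^{d/2}$ versus $c$ — is routine. I would also double-check the sign/normalisation conventions for $z$ in the quaternionic case ($z=p_1(\xi)$ versus $-p_1(\xi)$) so that the final formulas match the conventions of Remarks~\ref{importacxy}.
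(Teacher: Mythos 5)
Your proposal is correct and follows essentially the same route as the paper: apply the Whitney sum formula to the splitting $\gamma\cong\omega\oplus(\eta\otimes_\bR\omega)$ of \eqref{pot} to get $l(\gamma)=(1+z)(1+c^{d/2}+z)$, then reduce mod~$2$. The paper simply asserts $l_1(\eta\otimes_\bR\omega)=c^{d/2}+z$ without comment, so your explicit check of the quaternionic twisting formula $p_1(\eta\otimes_\bR\xi)=c^2+z$ (via $c_2(\eta_\bC\otimes_\bC\xi_\bC)=c^2+c\,c_1(\xi_\bC)+c_2(\xi_\bC)$ with $c_1(\xi_\bC)=0$, and using $2c^2=0$ to kill the sign ambiguity) is exactly the missing justification, rather than a different approach.
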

\begin{proof}
By \eqref{pot}, the total Chern or symplectic Pontryagin class is 
given by
\[
l(\gamma)\;=\;(1+z)(1+c^{d/2}+z)\;=\;1+c^{d/2}+2z+c^{d/2}z+z^2.
\]
This makes $l_1$ and $l_2$ explicit, and determines 
$w_d$ and $w_{2d}$ by applying $\rho$.
\end{proof}

Also, consider the real line bundle $p_3^*(\lambda)$ over 
$\BFd$. By analogy with Remark \ref{acnonnilp}, the composition 
$C_2<F^d<P^d$ is the standard inclusion; so there is an isomorphism 
$p_3^*(\lambda)\cong\eta$, whence
\begin{equation}\label{ptlisl}
w_1(p_3^*(\lambda))\;=\;a\sands c_1(p_3^*(\lambda_\bC))\;=\;c
\end{equation}
in $H^1(\BFd;\zmt)$ and $H^2(\BFd)$ respectively.

\begin{lem}\label{p3action}
The homomorphism $p_3^*\colon Z^*\to\bZ\om[c,z]/(2c)$ 
is determined by 
\begin{equation*}
p_3^*(m)=2z,\quad p_3^*(y)=(c^{d/2}+z)z,\sands p_3^*(c)=c. 
\end{equation*}
\end{lem}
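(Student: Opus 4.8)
The plan is to compute $p_3^*$ on each of the three polynomial generators $c$, $m$, $y$ of $Z^* = \bZ[c,m,y]/(2c,cm)$, using the bundle-theoretic descriptions assembled in \S\ref{chcl}. Since $p_3^*$ is a ring homomorphism and $Z^*$ is generated by $c$, $m$ and $y$, these three values determine it completely; one should also remark that the target relations ($2c=0$ and the image of $cm$) are respected, which is automatic once the values are known because $p_3^*(cm) = c\cdot 2z = 0$ in $\bZ[c,z]/(2c)$.

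First I would dispatch $c$: the class $c \in H^2(\BPd)$ is $c_1(\lambda_\bC)$, so $p_3^*(c) = c_1(p_3^*(\lambda)_\bC)$, which equals $c$ by the isomorphism $p_3^*(\lambda)\cong\eta$ recorded in \eqref{ptlisl}. Next, for $y = p_2^*(l_2)$: since $p_5 = p_2\scirc p_3$, we have $p_3^*(y) = p_3^*p_2^*(l_2) = p_5^*(l_2) = l_2(\gamma)$, and Lemma \ref{chclpbsig} gives $l_2(\gamma) = (c^{d/2}+z)z$, which is exactly the claimed value of $p_3^*(y)$. The generator $m = c^{d/2} + x$, with $x = p_2^*(l_1)$, is handled the same way: $p_3^*(x) = p_5^*(l_1) = l_1(\gamma) = c^{d/2} + 2z$ by Lemma \ref{chclpbsig}, while $p_3^*(c^{d/2}) = c^{d/2}$ from the first step; hence $p_3^*(m) = c^{d/2} + (c^{d/2}+2z) = 2c^{d/2} + 2z = 2z$, using $2c = 0$ in $\bZ[c,z]/(2c)$, which kills $2c^{d/2}$.

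I expect the main (minor) obstacle to be bookkeeping the torsion carefully — in particular making sure that the $2$-torsion relation $2c=0$ in $H^*(\BFd)\cong\bZ[c,z]/(2c)$ is what collapses $2c^{d/2}+2z$ down to $2z$, rather than any sign or coefficient slip, and confirming that $x$ really does pull back through $p_5$ to $l_1(\gamma)$ (as opposed to some class differing by a pullback from $BO_\bK(2)$), which follows from the characterisation of $x$ and $y$ as $p_2$-pullbacks in Remarks \ref{importacxy} together with the factorisation $p_5 = p_2\scirc p_3$. Everything else is a direct substitution of the already-computed characteristic classes of $\gamma \cong \omega\oplus(\eta\otimes_\bR\omega)$ from \eqref{pot} and Lemma \ref{chclpbsig}.
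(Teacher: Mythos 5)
Your proof is correct and follows the same route the paper indicates (the paper's one-line proof is exactly ``combine Lemma \ref{chclpbsig} with \eqref{ptlisl}''); you have simply unpacked that instruction, correctly using $m=c^{d/2}+x$, $p_5=p_2\scirc p_3$, and the relation $2c=0$ to collapse $2c^{d/2}+2z$ to $2z$.
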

\begin{proof}
It suffices to combine Lemma \ref{chclpbsig} with \eqref{ptlisl}.
\end{proof}
Over $\zmt$, the formulae become $p_3^*(m)=0$, $p_3^*(y)=(a^d+z)z$
and $p_3^*(a)=a$. 

%
%
%
%
%
%
%
%
%

\section{Mod $2$ cohomology}\label{motwco}

In this section, the homotopy commutative ladder \eqref{cofibsi} is
exploited to compute the cohomology ring $H^*(\spt;\zmt)$ in terms of 
$H^*(B;\zmt)$ and the Thom isomorphism. Some of the results are 
specific cases of those of Nakaoka \cite{nak:cpf}. 

To ease the calculations, it is convenient to identify the 
upper cofibre sequence with \eqref{bfbpmp}, and use Remarks 
\ref{importacxy}. The long exact sequence 
\begin{equation}\label{pregysemt}
\ldots\stackrel{\delta}{\lla}H^*(RK^\infty_\ssp;\zmt)\stackrel{i^*}{\lla}
H^*(B_{\ssp};\zmt)\stackrel{\:b^*}{\lla}
H^*(\MPd;\zmt)\stackrel{\delta}{\lla}\dots
\end{equation}
becomes the Gysin sequence for the bundle $\theta$, and may be 
rewritten as
\begin{equation}\label{gyse}
\ldots\stackrel{\delta}{\lla}\zmt\om[a,z]\stackrel{p_3^*}{\lla}
G^*\stackrel{\:{}\cdot m}{\lla}G^{*-d}\stackrel{\delta}{\lla}\dots,
\end{equation}
where $\cdt m$ denotes multiplication by the Euler class 
$m=w_d(\theta)$. The Thom isomorphism identifies 
$H^*(\MPd;\zmt)$ with the free $G^*$-module $G^*\br{t}$ on a 
single $d$-dimensional generator $t$, which is known as 
the Thom class and satisfies $b^*(t)=m$. The multiplicative 
structure of $H^*(\MPd;\zmt)$ is determined by the relation 
$t^2=m\om t$, and may be encoded as an isomorphism
\begin{equation}\label{cohthsp}
H^*(\MPd;\zmt)\;\cong\;t\om G^*[t]/(t^2\?+m\om t)
\end{equation}
of algebras over $\zmt$, where $t\om G^*[t]$ represents the ideal 
$(t)\subset G^*[t]$. The implication that $p_3^*(m)=0$ is confirmed 
by the mod $2$ version of Lemma \ref{p3action}. 

So the key to calculation is held by $\delta$, and its values on 
$z^k$ for $k>0$. For notational simplicity, it is convenient to let 
$R=\bZ$ or $\zmt$, and write 
\begin{equation}\label{defdel}
\delta_k\;\letbe\;\delta(z^k)
\end{equation}
in $H^{kd+1}(\MPd;R)$. The Thom isomorphism is defined by relative cup
product, and the following basic property \cite[(8.13)]{dol:lealto} is
required.
\begin{lem}\label{relcupdel}
For any $u$ in $H^*(\BPd;R)$ and $v$ in $H^*(\BFd;R)$, the 
equation 
\[
\delta(p_3^*(u)\om v)\;=\;u\opfm\delta(v)
\]
holds in $H^*(\MPd;R)$.\hfill$\Box$
\end{lem}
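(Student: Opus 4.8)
The plan is to recognise Lemma \ref{relcupdel} as the standard Leibniz-type property of the connecting homomorphism of a pair, applied to the relative cup product that underlies the Thom isomorphism; there is no substantive new content, only the need to match up the structures involved.

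First I would make the identification precise. By \S\ref{clsppibu}, $\BFd$ is the sphere bundle $S(\theta)$ inside the disc bundle $D(\theta)$, the map $j$ of \eqref{bfbpmp} is the zero section, and $\MPd=D(\theta)/S(\theta)$. Since $D(\theta)$ deformation retracts onto $\BPd$, the (reduced) cohomology $H^*(\MPd;R)$ is canonically $H^*(D(\theta),S(\theta);R)\cong H^*(\BPd,\BFd;R)$, and under this identification the sequence \eqref{pregysemt} is the long exact cohomology sequence of the pair $(\BPd,\BFd)$: the map $i^*$ (that is, $p_3^*$) is restriction along the inclusion $S(\theta)\hookrightarrow D(\theta)$, and $\delta\colon H^*(\BFd;R)\to H^{*+1}(\BPd,\BFd;R)=H^{*+1}(\MPd;R)$ is the connecting homomorphism.

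Next I would invoke the general fact. The relative cup product gives a pairing $H^p(\BPd;R)\otimes H^q(\BPd,\BFd;R)\to H^{p+q}(\BPd,\BFd;R)$, which is precisely the $H^*(\BPd;R)$-module structure on $H^*(\MPd;R)$ that is used in \eqref{cohthsp} and in the Thom isomorphism. The classical identity of Dold \cite[(8.13)]{dol:lealto} then states that for $u\in H^*(\BPd;R)$, with restriction $p_3^*(u)\in H^*(\BFd;R)$, and any $v\in H^*(\BFd;R)$,
\[
\delta\bigl(p_3^*(u)\opfm v\bigr)\;=\;(-1)^{|u|}\,u\opfm\delta(v),
\]
where on the right $u$ acts through the relative cup product. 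This is the assertion of the lemma up to the sign $(-1)^{|u|}$, which is trivial in both cases of interest: when $R=\zmt$ all signs vanish, and when $R=\bZ$ the ring $H^*(\BPd)\cong Z^*=\bZ[c,m,y]/(2c,cm)$ of Theorem \ref{cohbpd} is concentrated in even degrees, so $|u|$ is even. Hence $\delta(p_3^*(u)\opfm v)=u\opfm\delta(v)$ in both cases, as claimed. The only point requiring attention is the consistent choice of module structure on $H^*(\MPd;R)$ — ensuring that the product $u\opfm\delta(v)$ here is the same one used elsewhere in the cohomology computations — and I expect that verification to be routine.
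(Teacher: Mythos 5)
Your proposal is correct and is essentially the paper's own argument: the paper states the lemma with no written proof beyond the reference to Dold \cite[(8.13)]{dol:lealto}, having observed immediately beforehand that the Thom isomorphism is given by the relative cup product for the pair $(D(\theta),S(\theta))\simeq(\BPd,\BFd)$, which is exactly the identification you spell out. Your remark about the sign $(-1)^{|u|}$ being harmless --- trivially over $\zmt$, and over $\bZ$ because $Z^*$ is concentrated in even degrees --- is a correct and worthwhile aside that the paper leaves implicit.
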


Lemma \ref{p3action} and \eqref{cohthsp} then imply that 
$\delta(a^jz^k)=a^j\delta_k$ for any $j\geq 0$ and $k\geq 1$, and 
that the second order recurrence relation 
\begin{equation}\label{recur} 
\delta_{k+2}\;=\;a^d\delta_{k+1}+y\om\delta_k\sts{with}
\delta_0=0,\;\delta_1=a\om t
\end{equation}
holds in $H^*(\MPd;\zmt)$.  So $\delta$  may be found in 
terms of $a, y$ and $t$ by standard techniques from the theory of 
\emph{generalised Fibonacci polynomials} \cite{acms:gfp}.

Over any commutative ring $Q$ with identity, these polynomials 
lie in $Q[x_1,x_2]$, and are specified by the recurrence relation
\begin{equation}\label{ff}
q_{k+2}=x_1\om q_{k+1}+x_2\om q_k\sts{with} 
q_0=0,\;q_1=1;
\end{equation}
when $x_2=1$, they reduce to the Fibonacci polynomials 
\cite{jac:fpk}. Alternative choices of $q_0$ and $q_1$ create 
new sequences of polynomials, which may be written as sums 
of monomials by adapting the methods of \cite[\S2]{acms:gfp}. 
When applied to \eqref{recur} with $Q=H^*(\MPd;\zmt)$, they 
lead to the following.
\begin{prop}\label{valdelk} 
For any $k>0$, the equation 
\begin{equation}\label{dellam}
\delta_k\;=\;
\sum_{0\leq i\leq(k-1)/2}\!\binom{k-1-i}{i}a^{(k-1-2i)d+1}y^i\om t
\end{equation}
holds in $H^{kd+1}(\MPd;\zmt)$.\hfill$\Box$
\end{prop}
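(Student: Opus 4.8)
The plan is to prove \eqref{dellam} by induction on $k$, using the second order recurrence \eqref{recur} together with a single application of Pascal's identity. First I would dispose of the base cases. For $k=1$ the right-hand side of \eqref{dellam} contains only the term $i=0$, namely $\binom{0}{0}a^{\,0\cdot d+1}y^0\om t=a\om t=\delta_1$; for $k=2$ only $i=0$ again survives (since $\binom{0}{1}=0$), giving $a^{d+1}\om t$, which agrees with $\delta_2=a^d\delta_1+y\delta_0$ by \eqref{recur}. In fact these computations already exhibit \eqref{dellam} as the statement $\delta_k=a\om t\opfm q_k$, where $q_k\in\zmt[a^d,y]$ is the generalised Fibonacci polynomial \eqref{ff} specialised to $x_1=a^d$, $x_2=y$; so an equally good route is to quote the closed monomial form of $q_k$ from \cite[\S2]{acms:gfp} and multiply through by $a\om t$, the linearity of \eqref{recur} ensuring that $\delta_k$ inherits it.

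For the inductive step, assume \eqref{dellam} for $k$ and $k+1$. Substituting both expressions into \eqref{recur} gives
\begin{multline*}
\delta_{k+2}\;=\;\sum_{i}\binom{k-i}{i}a^{(k+1-2i)d+1}y^i\om t\\
+\;\sum_{i}\binom{k-1-i}{i}a^{(k-1-2i)d+1}y^{i+1}\om t,
\end{multline*}
and re-indexing the second sum by $j=i+1$ brings both into the common form $\sum_{i}\bigl(\binom{k-i}{i}+\binom{k-i}{i-1}\bigr)a^{(k+1-2i)d+1}y^i\om t$. Pascal's identity $\binom{k-i}{i}+\binom{k-i}{i-1}=\binom{k+1-i}{i}$ then produces exactly the $k+2$ instance of \eqref{dellam}, since $(k+2)-1-i=k+1-i$ and $((k+2)-1-2i)d+1=(k+1-2i)d+1$.

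The only delicate point is bookkeeping with the summation ranges: one must check that the $i=0$ contribution is untouched by the second sum (because $\binom{k}{-1}=0$), and that outside $0\le i\le k/2$ the new binomial coefficients $\binom{k+1-i}{i}$ vanish, so that the upper limit $(k+1)/2$ recorded in \eqref{dellam} is correct. This is entirely routine, and I expect no genuine obstacle; all the substance is already packed into the recurrence \eqref{recur}, which was derived above from Lemma \ref{p3action} and the Thom-isomorphism presentation \eqref{cohthsp} of $H^*(\MPd;\zmt)$.
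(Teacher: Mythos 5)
Your proof is correct and takes essentially the same approach as the paper: the paper derives the recurrence \eqref{recur} and then simply cites the theory of generalised Fibonacci polynomials (\cite{acms:gfp}, \cite{gou:hfq}) for the closed form, leaving Proposition \ref{valdelk} with an unproved $\Box$. You have made the underlying inductive argument (base cases plus Pascal's identity on the recurrence) explicit, and your observation that $\delta_k=a\om t\cdot q_k$ with $q_k$ the generalised Fibonacci polynomial in $a^d,y$ is exactly the identification the paper invokes implicitly.
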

Equation \eqref{dellam} may also be read off from 
\cite[p252]{gou:hfq}. Only the parity of the binomial coefficients 
is relevant, and by \cite[Lemma 2.6]{step:co} $\binom{a}{b}$ is 
odd precisely when the $1$s in the dyadic expansion of $b$ form 
a subset of the $1$s in the dyadic expansion of $a$. For example
$\delta_2=a^{d+1}\om t$ and 
$\delta_3=a^{2d+1}\om t+ay\om t$.
\begin{rem}
An immediate consequences of Proposition \ref{valdelk} is that  
\[
\delta_k\;\equiv\;a^{(k-1)d+1}\om t\;\;\text{mod $(y)t$} 
\]
for all $k>0$, and therefore that $\delta_k$ is non-zero. 
\end{rem}

These results may be used to understand the long exact cohomology 
sequence of the lower cofiber sequence of \eqref{cofibsi}, for which 
the commutative square
\[
\begin{CD}
H^*(\MPd;\zmt)@<{\delta}<<H^{*-1}(\RKI;\zmt)\\[-.1em]
@A{id}AA@AA{\pi_2^*}A\\[-.1em]
H^*(\MPd;\zmt)@<{\delta}<<H^{*-1}(\KPI;\zmt)
\end{CD}
\]
is crucial. The composition $\delta\scirc\pi_2^*$ is monic, because 
$\pi_2^*\colon\zmt\om[z]\to\zmt\om[a,z]$ is the natural inclusion 
and $\delta_k$ is never $0$. The lower $\delta$ is therefore also 
monic, and Nakaoka's short exact sequence \cite[p 668]{nak:cpf} 
\begin{equation}\label{nakses}
0\lla H^*(SP^2;\zmt)\stackrel{b_\varDelta^*}{\lla}
H^*(\MPd;\zmt)\stackrel{\delta}{\lla}H^{*-1}(\KPI;\zmt)
\lla 0
\end{equation} 
emerges immediately. Of course, \eqref{nakses} splits as vector spaces 
over $\zmt$.
\begin{thm}\label{cohsptmod2}
There is an isomorphism
\[
H^*(\spt;\zmt)\;\cong\;t\om G^*[t]/(t^2\? +mt,\,\delta_k:k>0),
\]
where $t$ and $m$ have dimension $d$, and $\delta_k$ has 
dimension $kd+1$.
\end{thm}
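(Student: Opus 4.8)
The plan is to read off $H^*(\spt;\zmt)$ from Nakaoka's short exact sequence \eqref{nakses}, which has already been derived above. That sequence presents $b_\varDelta^*\colon H^*(\MPd;\zmt)\to H^*(\spt;\zmt)$ as a surjective homomorphism of graded $\zmt$-algebras whose kernel is the image of the connecting homomorphism $\delta\colon H^{*-1}(\KPI;\zmt)\to H^*(\MPd;\zmt)$ of the lower cofibre sequence $\KPI\to\spt\to\MPd$ of \eqref{cofibsi}. So $H^*(\spt;\zmt)\cong H^*(\MPd;\zmt)/\operatorname{im}(\delta)$ as rings, and, given the description \eqref{cohthsp} of $H^*(\MPd;\zmt)$, everything comes down to identifying $\operatorname{im}(\delta)$ with the ideal $(\delta_k:k>0)$.

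First I would determine $\operatorname{im}(\delta)$ as a $\zmt$-subspace. The commutative square preceding \eqref{nakses} identifies this $\delta$ with the connecting map of the Gysin sequence \eqref{gyse} precomposed with $\pi_2^*\colon H^*(\KPI;\zmt)=\zmt[z]\hookrightarrow\zmt[a,z]$, which is the natural inclusion; hence $\delta(z^k)=\delta_k$ for every $k\geq 1$, with $\delta_0=0$, where $\delta_k$ is the class of \eqref{defdel} given explicitly by the generalised Fibonacci formula \eqref{dellam}. Since $\delta$ is $\zmt$-linear and the monomials $z^k$ span $H^*(\KPI;\zmt)$, it follows that $\operatorname{im}(\delta)=\operatorname{span}_{\zmt}\{\delta_k:k>0\}$.

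Next, combine the two facts that $\operatorname{im}(\delta)$ is an ideal of $H^*(\MPd;\zmt)$ (being the kernel of the ring map $b_\varDelta^*$) and that it is the $\zmt$-span of the $\delta_k$. As an ideal containing every $\delta_k$ it contains $(\delta_k:k>0)$, and as the span of the $\delta_k$ it is contained in that ideal, so the two coincide. Invoking \eqref{cohthsp} then gives
\[
H^*(\spt;\zmt)\;\cong\;H^*(\MPd;\zmt)/(\delta_k:k>0)\;\cong\;tG^*[t]/(t^2+mt,\,\delta_k:k>0),
\]
and the asserted dimensions are immediate: $t$ is the $d$-dimensional Thom class with $b^*(t)=m$, the Euler class $m=a^d+x=w_d(\theta)$ has dimension $d$, and $\delta_k\in H^{kd+1}(\MPd;\zmt)$.

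There is no serious obstacle here, since $\operatorname{im}(\delta)$ is automatically an ideal; the one thing requiring care is the routine bookkeeping between reduced and unreduced cohomology in \eqref{nakses}. It is, however, instructive to see directly why the presentation is so economical: by \eqref{dellam} every monomial occurring in $\delta_k$ carries a positive power of $a$, so $\delta_k\in a\cdot tG^*[t]$, and since $am=0$ in $G^*$ by \eqref{isobst} while $t^2=mt$, we get $\delta_k\cdot(gt)=0$ for every $gt\in H^*(\MPd;\zmt)$. Thus each $\delta_k$ annihilates the whole (non-unital) cohomology ring, which gives an alternative verification that $\operatorname{span}_{\zmt}\{\delta_k:k>0\}$ is already an ideal.
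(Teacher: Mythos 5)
Your proposal is correct and follows essentially the same route as the paper: read off the isomorphism $H^*(\spt;\zmt)\cong H^*(\MPd;\zmt)/\operatorname{im}(\delta)$ from Nakaoka's short exact sequence \eqref{nakses}, then identify $\operatorname{im}(\delta)$ with $(\delta_k:k>0)$ using $\delta(z^k)=\delta_k$ and the fact that the $z^k$ generate $H^*(\KPI;\zmt)$. Your closing observation that each $\delta_k$ annihilates the whole reduced ring (because every monomial in \eqref{dellam} carries a positive power of $a$, $am=0$, and $t^2=mt$) — so that the $\zmt$-span of the $\delta_k$ is automatically the ideal they generate — is exactly the content the paper records just afterwards in Remarks \ref{mod2prods}; you have simply folded that remark into the proof.
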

\begin{proof}
It follows from \eqref{nakses} that there is an isomorphism
\[
H^*(\spt;\zmt)\;\cong\;H^*(\MPd;\zmt)/\delta(H^{*-1}(\KPI;\zmt))
\]
of rings, so it suffices to note that $H^*(\KPI;\zmt)$ is 
generated by the $z^k$.
\end{proof}

\begin{rems}\label{mod2prods}
Since $\delta_k\om t=0$ in $H^*(\MPd;\zmt)$ for every $k>0$, the 
summand $\bZ\br{\delta_k}$ coincides with the ideal $(\delta_k)$.
Only monomials of the form $m^iy^j\om t$ multiply non-trivially 
in $H^*(\spt;\zmt)$, subject to the relation $t^2=mt$.
\end{rems}

The isomorphism of Theorem \ref{cohsptmod2} may be clarified by
importing the values of $\delta_k$ from Proposition
\ref{valdelk}. For example, $\delta_3=0$ gives $a^5\om t=ay\om t$
in the complex case and $a^9\om t=ay\om t$ in the quaternionic case,
in dimensions $7$ and $13$ respectively.

%
%
%
%
%
%
%
%
%

\section{Integral cohomology}\label{inco}

In this section, the commutative ladder \eqref{cofibsi} is 
exploited to compute the integral cohomology rings $H^*(\MPd)$ and
$H^*(\spt)$. Input is provided by the geometric and mod $2$
cohomology results of \S\S \ref{orcose}, \ref{chcl}, and
\ref{motwco}. As in earlier sections, integral cohomology classes and
their mod $2$ reductions may be written identically when the context is 
sufficient to distinguish between them.

The integral version of \eqref{pregysemt} is the long exact sequence
\begin{equation}\label{lad1}
\ldots\stackrel{\delta}{\lla}H^*(RK^\infty_\ssp)\stackrel{\;i^*}{\lla}
H^*(B_\ssp)\stackrel{\;b^*}{\lla}
H^*(\MPd)\stackrel{\delta}{\lla}\dots.
\end{equation} 
The bundle $\theta$ has no integral Euler class, because
$w_1(\theta)=a$ by \eqref{stwhthetad}; so \eqref{gyse} has no analogue
over $\bZ$. Nevertheless, almost all the required information can be
deduced directly from \eqref{lad1}, by separating the cohomology
groups into even and odd dimensional summands.

By Theorem \ref{cohbpd} and Remarks \ref{importacxy}, 
$H^{od}(B)$ and $H^{od}(\RKI)$ are zero. So \eqref{lad1} reduces 
to a collection of exact sequences of length $4$, which together 
imply that $b^*$ and $\delta$ induce isomorphisms
\begin{equation}\label{intisos}
H^{ev}(\MPd)\;\cong\;\Ker i^*\spandsp
H^{od}(\MPd)\;\cong\;\Cok i^*
\end{equation}
respectively. The relative cup product invests $H^*(\MPd)$ with 
a natural module structure over $Z^*$, and the first isomorphism is 
of $Z^*$-algebras; it interacts with $\delta$ as in Lemma 
\ref{relcupdel}. The second isomorphism is of graded abelian groups,
and shifts dimension by $1$. 

\begin{lem}\label{kerp3}
The kernel of $i^*$ is the principal ideal $(m^2-4y)$ in $Z^*$.
\end{lem}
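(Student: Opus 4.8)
The plan is to identify $i^*\colon Z^*\to\bZ[c,z]/(2c)$ explicitly and compute its kernel directly. First I would note that $i$ factors (up to homotopy) as $\BFd\xrightarrow{p_3}\BPd\xrightarrow{j}\MPd$ is \emph{not} quite what we want; rather, by the homotopy commutative square \eqref{h1h2sq} and Remarks \ref{importacxy}, the map $i\colon\RKI\to B$ corresponds to $p_3\colon\BFd\to\BPd$ after transporting classes, so $i^*$ is identified with $p_3^*\colon Z^*\to\bZ[c,z]/(2c)$. Lemma \ref{p3action} gives the three defining values $p_3^*(c)=c$, $p_3^*(m)=2z$, $p_3^*(y)=(c^{d/2}+z)z$, which determines $i^*$ completely since $Z^*=\bZ[c,m,y]/(2c,cm)$.

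Next I would check the containment $(m^2-4y)\subseteq\Ker i^*$: applying $i^*$ to $m^2-4y$ gives $(2z)^2-4(c^{d/2}+z)z=4z^2-4c^{d/2}z-4z^2=-4c^{d/2}z$, and since $cm=0$ in $Z^*$ forces $c\cdot m^2=0$, the class $m^2$ lies in the $c$-free part; but in $\bZ[c,z]/(2c)$ we have $4c^{d/2}z=2\cdot 2c^{d/2}z=0$ because $2c=0$. Hence $i^*(m^2-4y)=0$, so $(m^2-4y)\subseteq\Ker i^*$. (A brief check that $m^2-4y$ generates a genuine principal ideal, i.e.\ interacts correctly with the relations $2c=0$ and $cm=0$, is routine: $c(m^2-4y)=-4cy$, which is the natural image of $m^2-4y$ under multiplication, so no collapse occurs.)

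For the reverse inclusion I would split $Z^*$ into its $c$-torsion-free part and its $c$-divisible part. Writing an arbitrary element of $Z^*$ as $f(m,y)+c\cdot g(c,y)$ (using $cm=0$ to eliminate mixed $cm$ terms), the image under $i^*$ is $f(2z,(c^{d/2}+z)z)+c\cdot g(c,(c^{d/2}+z)z)$ in $\bZ[c,z]/(2c)$. The $c$-free summand lands in $\bZ[z]$ and the $c$-multiple summand lands in $c\cdot\bZ[c,z]/(2c)=c\cdot\bF_2[c,z]$; these two targets intersect only in $0$, so $i^*$ of the whole element vanishes iff both summands vanish. The key computation is then: for $f\in\bZ[m,y]$, when does $f(2z,(c^{d/2}+z)z)=0$ in $\bZ[c,z]/(2c)$? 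Modulo $2$ this forces (after the substitution $m\mapsto 0$, $y\mapsto(a^d+z)z$, consistent with the mod $2$ version of Lemma \ref{p3action}) that $f\equiv 0\ \mathrm{mod}\ 2$, i.e.\ $f=2f_1$; then $4f_1(z^2+\dots)$-type cancellation, together with the mod $4$ structure, should push $f$ into the ideal generated by $m^2-4y$ by an induction on degree, using that $m^2-4y\mapsto -4c^{d/2}z$ handles exactly the obstruction coming from the $c^{d/2}z$ term.

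\textbf{Main obstacle.} The delicate point is the reverse inclusion — specifically, the bookkeeping of $2$-adic valuations in $\bZ[m,y]$ once one substitutes $m\mapsto 2z$. Because $m^2-4y$ maps to something divisible by $4$ but only supported on the $c$-torsion part, one must argue that \emph{every} relation among $i^*(m),i^*(y)$ in the integral ring $\bZ[c,z]/(2c)$ is generated by this single quadratic relation and not, say, by higher relations appearing in large degree. I expect this to require either a direct Hilbert-series/dimension count (comparing $Z^*/(m^2-4y)$ with $\mathrm{Im}\,i^*$ degree by degree) or a clean normal-form argument: every element of $Z^*$ is congruent mod $(m^2-4y)$ to one of the form $a(y)+m\,b(y)+c\,g(c,y)$ with $a,b\in\bZ[y]$, $g\in\bF_2[c,y]$, and on such normal forms $i^*$ is visibly injective. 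Establishing that normal form is where the real work lies.
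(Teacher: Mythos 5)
The forward inclusion $(m^2-4y)\subseteq\Ker i^*$ is checked correctly, and the opening identification of $i^*$ with $p_3^*$ (via \eqref{h1h2sq} and Remarks \ref{importacxy}) is sound despite the slightly muddled exposition. The problem is the reverse inclusion, where your argument contains a concrete error and then trails off into an unproven sketch.

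Your key claim is that after writing an element of $Z^*$ as $f(m,y)+c\,g(c,y)$, the two images $f(2z,(c^{d/2}+z)z)$ and $c\,g(c,(c^{d/2}+z)z)$ land in \emph{disjoint} pieces of $\bZ[c,z]/(2c)$ --- the first in $\bZ[z]$, the second in $c\,\bF_2[c,z]$ --- so that each must vanish separately. This is false: since $y\mapsto(c^{d/2}+z)z=c^{d/2}z+z^2$, the image of $f(m,y)$ does \emph{not} lie in $\bZ[z]$. Already $f=y$ gives $c^{d/2}z+z^2$, which has a nontrivial $c$-component. The two summands genuinely interact, and disentangling them is precisely the content of the lemma; assuming them disjoint begs the question. (Note that most of the $c$-terms coming from $f$ \emph{are} killed because $m\mapsto 2z$ introduces a factor of $2$ and $2c=0$ --- that is the actual reason the $g$-terms can be isolated --- but this must be argued, not asserted, and it fails for the top monomial $f_ky^k$, which contributes $c$-terms with odd coefficients.)

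What the paper does instead, and what you would need to supply, is to write the element in the explicit form \eqref{fgformat}
\[
\sum_{i=0}^kf_i\, m^{2(k-i)}y^i\;+\;\sum_{j=0}^{k-1}g_j\, c^{(k-j)d}y^j,
\]
and then equate coefficients of the specific monomials $c^{kd},\,c^{(k-1/2)d}z,\dots,c^{(k-(k-1)/2)d}z^{k-1}$ in the image to force $g_0,\dots,g_{k-1}\equiv 0\bmod 2$, and separately equate the coefficient of $z^{2k}$ to get $\sum_i 2^{2(k-i)}f_i=0$, i.e.\ $f(m,y)|_{m^2=4y}=0$, whence $m^2-4y\mid f$. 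This coefficient comparison is exactly the ``$2$-adic bookkeeping'' you flag as the main obstacle and leave undone. Your suggested normal form $a(y)+m\,b(y)+c\,g(c,y)$ modulo $(m^2-4y)$ could in fact be made to work, but you do not carry it out, and the step you label ``routine'' (showing $i^*$ is injective on normal forms) is where all the difficulty sits. As it stands, the reverse inclusion is not established.
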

\begin{proof}
Lemma \ref{p3action} implies that $i^*(m^2-4y)=0$, so 
$(m^2-4y)\subseteq\Ker i^*$. 

On the other hand, let $f(m,y)+g(c,y)$ denote an arbitrary 
element 
\begin{equation}\label{fgformat}
\sum_{i=0}^kf_i\, m^{2(k-i)}y^i\;+\;\:
\sum_{j=0}^{k-1}g_j\, c^{(k-j)d}y^j
\end{equation}
of $Z^{2kd}$, and suppose that it is annihilated by $i^*$; then 
\[
f(2z,(c^{d/2}+z)z)+g(c,(c^{d/2}+z)z)\;=\;0
\]
in $\bZ\om[\om c,z]/(2c)$. Equating coefficients of $c^{kd}$,
$c^{(k-1/2)d}z$, \dots, $c^{(k-(k-1)/2)d}z^{k-1}$ shows
that $g_0$, $g_1$, \dots, $g_{k-1}$ $\equiv 0$ mod $2$, and hence 
that $g(c,y)=0$. Equating coefficients of $z^{2k}$ shows that
$\fssp\letbe\sum_{i=0}^k2^{2(k-i)}f_i=0$, and hence that
\[
f(m,y)|_{m^2=4y}\;=\;\fssp y^n\;=\;0. 
\]
Thus $m^2-4y$ divides $f(m,y)$. Similar arguments apply to 
elements of $Z^{2(kd+q)}$ for $1\leq q<d$, and confirm that 
$\Ker i^*\subseteq(m^2-4y)$ in $Z^*$. 
\end{proof}

\begin{rem}\label{csiszero}
To describe the image of $i^*$, it therefore suffices to work 
modulo $(m^2-4y)$; moreover, every element of the ideal 
takes the form $f(m,y)(m^2-4y)$ for some polynomial $f$ in 
$\bZ\om[m,y]$, because $c(m^2-4y)=0$ in $Z^{2d+2}$.
\end{rem}

\begin{lem}\label{cokp3}
The cokernel of $i^*$ is isomorphic to the $\zmt$-module on 
basis elements $c^iz^j$ for $0\leq i<jd/2$.
\end{lem}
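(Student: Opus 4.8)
The plan is to compute $\Cok i^*$ by making $i^*$ completely explicit. By Remarks~\ref{importacxy} we may identify $i^*$ with $p_3^*\colon Z^*\to\bZ[c,z]/(2c)$, and by Lemma~\ref{kerp3} it induces an injection $Z^*/(m^2-4y)\hookrightarrow\bZ[c,z]/(2c)$; since $Z^*/(m^2-4y)$ is, as an abelian group, spanned by $y^k,\ my^k$ $(k\ge0)$ together with the $2$-torsion classes $c^iy^k$ $(i\ge1,\ k\ge0)$, the image $\Ima i^*$ is the subgroup spanned by the images of these. Lemma~\ref{p3action} gives
\[
i^*(y^k)=z^k(z+c^{d/2})^k,\qquad i^*(c^iy^k)=c^iz^k(z+c^{d/2})^k,\qquad i^*(m^jy^k)=2^jz^{2k+j},
\]
the last identity being exact because every cross-term carries a factor $2c=0$. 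Consequently $2z^N\in\Ima i^*$ for all $N\ge1$ (from $2i^*(y^k)$ or $i^*(my^k)$), while $1=i^*(1)$ and $c^i=i^*(c^i)$ also lie in $\Ima i^*$; so $\Cok i^*$ vanishes in degree $0$ and is annihilated by $2$, hence is a $\zmt$-vector space.

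Dividing out these elements reduces the problem to $\Cok i^*=\bar A/\Lambda$, where $\bar A=\zmt\{c^iz^j:i\ge0,\ j\ge1\}$ and $\Lambda$ is the $\zmt$-span of the homogeneous elements $v_{i,k}:=c^i\sum_{l=0}^k\binom{k}{l}c^{ld/2}z^{2k-l}$ $(i\ge0,\ k\ge1)$ coming from the $i^*(c^iy^k)$, the binomial coefficients being read mod~$2$. The essential point is that $\Lambda$ is stable under multiplication by $c$ but \emph{not} by $z$, so the family $v_{i,k}$ cannot be collapsed to a single generator and must be handled as a whole. Call a monomial $c^az^b$ (with $b\ge1$) \emph{reduced} if $2a<bd$, and set $e(c^az^b):=2a-bd$. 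In $v_{i,k}$ the summand of largest defect is the bottom term $c^{i+kd/2}z^k$, which has coefficient $\binom{k}{k}=1$ and defect $2i$; moreover every non-reduced monomial $c^az^b$ ($b\ge1$) is the bottom term of exactly one $v_{i,k}$, namely $(i,k)=(a-bd/2,\,b)$. Solving the relation $v_{i,k}\equiv0$ for its bottom term rewrites any non-reduced monomial as a $\zmt$-combination of monomials of strictly smaller defect; as the defect is bounded below within each fixed total degree, iterating shows that the reduced monomials span $\Cok i^*$.

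It remains to prove that the reduced monomials are independent, which I would do one degree at a time (both sides are zero in odd degrees, since $d$ is even). In degree $N$ the pertinent relations are exactly the $v_{i,k}$ with $i+kd=N/2$; there are $\lfloor N/(2d)\rfloor$ of them, which is precisely the number of non-reduced monomials of degree $N$. If these relations are $\zmt$-independent then $\dim_{\zmt}\Cok^N=\dim\bar A^N-\lfloor N/(2d)\rfloor$ equals the number of reduced monomials of degree $N$, and with the spanning statement this makes the reduced monomials a basis; the reduced monomials of degree $N$ are by definition the $c^iz^j$ with $0\le i<jd/2$ and $2i+jd=N$, which is the assertion. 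Independence itself follows from a leading-term argument: the lowest power of $z$ occurring in $v_{i,k}$ is $z^k$, attained only by the bottom term, and within a fixed degree $k$ determines $i$, so in any nontrivial $\zmt$-combination the bottom term of the summand with smallest $k$ cannot be cancelled.

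The main obstacle is organisational rather than conceptual: one must recognise that, degree by degree, the infinite family of relations is finite and ``triangular'' with distinct bottom terms, and choose a single monomial ordering --- by defect for the spanning step, by lowest $z$-power for the independence step --- that makes both arguments go through. Because $\Lambda$ is a module over $\zmt[c]$ but not over $\zmt[c,z]$, the $z$-dependence cannot be simplified away. The supporting facts (the identification of $i^*$, the collapse of all mixed $2$--$c$ cross-terms, and the mod~$2$ binomial bookkeeping) are routine given Lemmas~\ref{p3action} and~\ref{kerp3}.
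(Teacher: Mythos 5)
Your proof is correct and its overall shape matches the paper's: both establish that $\Cok i^*$ is a $\zmt$-module spanned by the $c^iz^j$ with $j\geq 1$, and both rewrite every non-reduced monomial in terms of reduced ones by solving the relation $i^*(c^iy^k)=0$ for its extremal term (your ``defect'' ordering coincides, in each fixed total degree, with the paper's ``decreasing powers of $c$''). Where you diverge is the linear-independence step. The paper argues directly on the target: it assumes a $\zmt$-combination of reduced monomials lies in $\Ima i^*$, writes a general element of the image as $f(2z,(c^{d/2}+z)z)+g(c,(c^{d/2}+z)z)$, and adapts the coefficient-chasing of Lemma~\ref{kerp3} to force all $\epsilon_i$ to vanish. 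You instead argue on the source: you show the images $v_{i,k}$ are themselves $\zmt$-independent by a lowest-$z$-power leading-term argument, observe that in each degree their number $\lfloor N/(2d)\rfloor$ equals the number of non-reduced monomials (via the bijection taking $c^az^b$ to its unique ``parent'' $v_{a-bd/2,\,b}$), and close with a dimension count. Both routes are sound; yours avoids re-running the slightly delicate coefficient analysis of Lemma~\ref{kerp3}, at the cost of having to verify the degree-by-degree dimension bookkeeping. One presentational point worth tightening: when you write $\Cok i^*=\bar A/\Lambda$ you are implicitly invoking that $\Ima i^*$ is generated as a subgroup by $1$, the $c^a$, the $2z^N$, and the $v_{i,k}$ (coming from the stated $\bZ$-basis of $Z^*/(m^2-4y)$), and that the $m^jy^k$ with $j\geq 1$ contribute nothing new beyond the $2z^N$; this is true for the reasons you give, but deserves an explicit sentence.
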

\begin{proof}
First observe that $i^*(2y^k)=2z^{2k}$ and $i^*(my^k)=2z^{2k+1}$ in
$H^*(RK^\infty_{\ssp})$, for every $k>0$. So $\Cok i^*$ is a
$\zmt$-module, spanned by those elements $c^iz^j$ for which $i\geq 0$ 
and $j>0$. There are relations amongst them because
\begin{equation}\label{relelk}
i^*(y^j)\;=\;(c^{d/2}+z)^jz^j\;=\;0,
\end{equation}
in $\Cok i^*$, and \eqref{relelk} may also be multiplied by any $c^i$. 
The resulting expression
\begin{equation}\label{czaccept}
c^{jd/2}z^j\;=\;\sum_{0<i\leq j}\binom{j}{i}c^{(j-i)d/2}z^{j+i}
\end{equation}
describes $c^{jd/2}z^j$ as a homogeneous linear combination of
monomials $c^az^b$ for which $0\leq a<bd/2$. By iteration, a similar
description arises for \emph{any} \smash{$c^iz^j$}; this iteration
terminates, because every step reduces the powers of $c$ that occur.

Linear independence of the $c^az^b$ follows by assuming that
\[
\sum_{0<i\leq k}\epsilon_i\opfm c^{(k-i)d/2}z^{k+i}\;=\;
f(2z,(c^{d/2}+z)z)+g(c,(c^{d/2}+z)z)
\]
in $H^{2kd}(\RKI)$, with $\epsilon_i=0$ or $1$ for $i<k$. Adapting 
the  proof of Lemma \ref{kerp3} shows that  
\mbox{$g_0$, $g_1$, \dots, $g_{k-1}$, $f_k\equiv 0$ mod $2$}, 
so $\epsilon_i=0$ for $i<k$ and $\epsilon_k\equiv 0$ mod $2$, as 
required. Similar arguments apply in dimensions $2(kd+q)$, for 
$1\leq q<d$.
\end{proof}
To illustrate the iteration, note that \eqref{czaccept} gives
$c^{3d/2}z^3=c^d z^4+c^{d/2}z^5+z^6$ for $j=3$; so
$c^{5d/2}z^3=c^{2d}z^4+c^{3d/2}z^5+c^dz^6$. Importing $c^{2d}z^4=z^8$
yields
\[
c^{5d/2}z^3\;=\;c^{3d/2}z^5+c^dz^6+z^8,
\] 
which is of the required format.

\begin{rem}\label{redmonic}
Lemma \ref{cokp3} shows that the coefficient homomorphism 
$2\colon\bZ\to\bZ$ induces the zero homomorphism on 
$H^{od}(\MPd)$, and the associated long exact cohomology sequence 
proves that mod $2$ reduction is monic.
\end{rem}

\begin{thm}\label{intcohmpd}
As a $Z^*$-module, $H^*(\MPd)$ is generated by a $2d$-dimensional 
element $s$ of infinite order, and the $jd+1$-dimensional 
elements $\delta_j$ of order $2$, for $j>0$. The 
module structure is determined by $c\om s=m\om \delta_j=0$,
\[
c^{jd/2}\om\delta_j=
\!\sum_{0<i\leq j}\binom{j}{i}c^{(j-i)d/2}\delta_{j+i},\sands\;
y\opfm\delta_j=c^{d/2}\delta_{j+1}+\delta_{j+2};
\]  
the algebra structure is determined by
\[
s^2\:=\:(m^2-4y)\om s\sands 
\delta_i\delta_j\:=\:s\delta_i\;=\;0,
\]
for every $i>0$. 
\end{thm}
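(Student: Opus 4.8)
The plan is to read off the module structure from the exact sequence \eqref{lad1} via the isomorphisms \eqref{intisos}, and to obtain the multiplicative structure by combining the $Z^*$-algebra isomorphism $H^{ev}(\MPd)\cong\Ker i^*$ with the relative-cup-product formula of Lemma \ref{relcupdel}. First I would identify the even part: by \eqref{intisos} and Lemma \ref{kerp3}, $b^*$ carries $H^{ev}(\MPd)$ isomorphically onto the principal ideal $(m^2-4y)\subset Z^*$; since by Remark \ref{csiszero} every element of that ideal is $f(m,y)(m^2-4y)$ for some $f\in\bZ[m,y]$ (because $c(m^2-4y)=0$), the class $s$ with $b^*(s)=m^2-4y$ is a free generator of $H^{ev}(\MPd)$ over $\bZ[m,y]$, with $c\om s=0$ and $y^k s,\, m y^k s$ describing all even classes. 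This immediately gives the relation $s^2=(m^2-4y)s$ on pulling back the identity $(m^2-4y)^2=(m^2-4y)\cdot(m^2-4y)$ under the ring map $b^*$.

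Next I would treat the odd part. By \eqref{intisos}, $\delta\colon H^{od-1}(\RKI)\to H^{od}(\MPd)$ is an isomorphism onto $\Cok i^*$, which by Lemma \ref{cokp3} is the $\zmt$-module on the $c^iz^j$ with $0\le i<jd/2$. Setting $\delta_j=\delta(z^j)$ (consistently with \eqref{defdel}) and using $\delta(c^i z^j)=c^i\delta_j$ (an instance of Lemma \ref{relcupdel}, since $c\in\operatorname{im}p_3^*$), the generators of Lemma \ref{cokp3} become $c^i\delta_j$, so $H^{od}(\MPd)$ is generated over $Z^*$ by the $\delta_j$, each of order $2$ because $\Cok i^*$ is $2$-torsion. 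The two $Z^*$-module relations among the $\delta_j$ are then transcriptions of relations already recorded in $\Cok i^*$: the relation $c^{jd/2}\delta_j=\sum_{0<i\le j}\binom{j}{i}c^{(j-i)d/2}\delta_{j+i}$ is $\delta$ applied to \eqref{czaccept}, and $y\delta_j=c^{d/2}\delta_{j+1}+\delta_{j+2}$ is $\delta$ applied to the identity $y z^j=(c^{d/2}+z)z\cdot z^j=c^{d/2}z^{j+1}+z^{j+2}$ in $H^*(\RKI)$, again via Lemma \ref{relcupdel}. The relation $m\om\delta_j=0$ follows because $m\delta_j=\delta(p_3^*(m)z^j)$ and $p_3^*(m)=2z$ by Lemma \ref{p3action}, so $m\delta_j=\delta(2z^{j+1})=2\delta_{j+1}=0$.

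Finally, the cross-product relations $\delta_i\delta_j=s\delta_j=0$: both products land in $H^{od}(\MPd)$, which is $2$-torsion; but $\delta_i\delta_j$ is a product of two odd classes and $s$ is an even class of infinite order, and in either case the product is $2$-divisible — e.g.\ $s\delta_j=b^*(s)\cdot\delta_j$ computed through the $Z^*$-module structure lies in $(m^2-4y)\cdot(\text{odd part})$, and $m^2\delta_j=0$, $4y\delta_j=0$ since $2\delta_j=0$; while $\delta_i\delta_j$ is killed because $H^{od}(\MPd)$ injects into its mod $2$ reduction by Remark \ref{redmonic}, and mod $2$ the classes $\delta_k$ multiply trivially by Remarks \ref{mod2prods} (only $m^iy^j t$ multiply nontrivially in $H^*(\MPd;\zmt)$, and $\delta_i\delta_j$ reduces into the $\delta$-ideal which squares to zero there). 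The main obstacle I anticipate is bookkeeping: one must check that the two displayed $Z^*$-module relations, together with $c\delta_j=$ (its expansion) and $c\om s=0$, give a complete presentation — i.e.\ that no further relations are hidden — which amounts to matching the $\bZ$-ranks dimension by dimension against the explicit bases from Lemmas \ref{kerp3} and \ref{cokp3}. Verifying that $s$ together with $m^iy^j$ freely span $\Ker i^*$, and that the $c^i\delta_j$ with $0\le i<jd/2$ are exactly a $\zmt$-basis of $\Cok i^*$, is the content of those lemmas, so the argument here is essentially assembly; the one genuinely delicate point is confirming that the relation $y\delta_j=c^{d/2}\delta_{j+1}+\delta_{j+2}$ together with $c^{jd/2}\delta_j=\cdots$ suffice to reduce every $c^i\delta_j$ to the chosen basis, which follows from the terminating iteration already established in the proof of Lemma \ref{cokp3}.
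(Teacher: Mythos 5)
Your plan follows the paper's proof closely, and most steps are sound: the identification of the even and odd parts via \eqref{intisos}, the definitions of $s$ and $\delta_j$, and the derivations of $c\om s=0$, $m\om\delta_j=0$, $s^2=(m^2-4y)\om s$, and the two displayed $Z^*$-module relations from Lemma \ref{relcupdel}. Your argument for $s\delta_j=0$ via the $Z^*$-module structure (writing $s\delta_j=b^*(s)\cdot\delta_j=(m^2-4y)\delta_j$, which vanishes because $m\delta_j=0$ and $2\delta_j=0$) is correct and in fact slightly more direct than the paper's route through mod $2$ reduction and Remark \ref{redmonic}.

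However, your justification of $\delta_i\delta_j=0$ is flawed. You assert that "both products land in $H^{od}(\MPd)$", but $\delta_i$ and $\delta_j$ both have odd degree, so $\delta_i\delta_j$ has degree $(i+j)d+2$, which is \emph{even}. Consequently Remark \ref{redmonic}, which concerns monicity of mod $2$ reduction on $H^{od}(\MPd)$, says nothing about $\delta_i\delta_j$, and the subsequent appeal to the mod $2$ product structure of $H^*(\MPd;\zmt)$ is a non sequitur. The correct and simpler reason, which is what the paper uses, is that $\delta_i\delta_j$ is $2$-torsion (since $2\delta_i=0$) and lies in $H^{ev}(\MPd)\cong\Ker i^*=(m^2-4y)$, which is a torsion-free abelian group by Lemma \ref{kerp3} and Remark \ref{csiszero}; hence $\delta_i\delta_j$ must vanish.
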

\begin{proof}
Define $s$ by $b^*(s)=m^2-4y$, and $\delta_j$ by \eqref{defdel}. 
The abelian group structure then follows from Lemmas \ref{kerp3} and 
\ref{cokp3}, and Remark \ref{csiszero}. The relation $c\om s=0$ holds 
because $b^*(c\om s)=c(m^2-4y)=0$ by Theorem \ref{cohbpd}, and 
$b^*$ is monic; also, $m\om\delta_j=\delta(2z^{j+1})=0$ for any 
$j$, by Lemma \ref{relcupdel}. The formula for $c^{jd/2}\om\delta_j$ 
arises by applying $\delta$ to \eqref{czaccept}, and the formula for 
$y\opfm\delta_j$ is the integral analogue of \eqref{recur}.

The relation $s^2=(m^2-4y)\om s$ holds because $b^*$ is monic, and
$\delta_i\delta_j=0$ because $H^{ev}(\MPd)$ is torsion free. Finally,
the mod $2$ reduction $\rho(s\delta_i)=\rho(s)\om\delta_i$
is zero by Remarks \ref{mod2prods}, so $s\delta_i=0$, by Remark
\ref{redmonic}.
\end{proof} 
\begin{cor}\label{preuij}
There is an isomorphism
\[
H^{od}(\MPd)\;\cong\;\zmt\om\br{c^i\delta_j:0\leq i<jd/2}
\]
of graded abelian groups.
\end{cor}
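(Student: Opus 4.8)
The plan is to read the statement straight off the second isomorphism in \eqref{intisos}. First I would recall why that isomorphism has the required form. Since $H^{od}(B)=0$ by Theorem \ref{cohbpd} and Remarks \ref{importacxy}, the long exact sequence \eqref{lad1} makes the restriction of $b^*$ to $H^{od}(\MPd)$ vanish; hence $\delta$ carries $H^{ev}(\RKI)$ onto $H^{od}(\MPd)$ with kernel exactly the image of $i^*$, and so induces an isomorphism $\Cok i^*\cong H^{od}(\MPd)$ that raises dimension by $1$. Since $\Cok i^*$ is a $\zmt$-module by Lemma \ref{cokp3} and $H^{od}(\MPd)$ is $2$-torsion by Theorem \ref{intcohmpd}, this is an isomorphism of graded $\zmt$-modules.

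Next I would transport the basis furnished by Lemma \ref{cokp3}. Under the identifications of Remarks \ref{importacxy} the class $c\in H^2(\RKI)$ is the pullback $p_3^*(c)$ of $c\in Z^*\cong H^*(\BPd)$, so Lemma \ref{relcupdel} yields
\[
\delta(c^iz^j)\;=\;c^i\delta(z^j)\;=\;c^i\delta_j
\]
for every $i\geq 0$ and $j>0$. Consequently the isomorphism above carries the $\zmt$-basis $\{c^iz^j:0\leq i<jd/2\}$ of $\Cok i^*$ onto $\{c^i\delta_j:0\leq i<jd/2\}$, which is therefore a $\zmt$-basis of $H^{od}(\MPd)$.

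It remains to check dimensions: $z^j$ has dimension $jd$ in $H^*(\RKI)$, so $c^iz^j$ has dimension $jd+2i$, and since $\delta$ raises dimension by $1$ the class $c^i\delta_j$ has dimension $jd+2i+1$ --- consistent with $\delta_j$ in dimension $jd+1$ and $c$ in dimension $2$. This gives the asserted isomorphism of graded abelian groups. I do not expect any real obstacle: the corollary simply repackages the additive content of Theorem \ref{intcohmpd} together with Lemma \ref{cokp3}, and the one point that needs a little care is confirming that $c$ is a $p_3^*$-pullback, so that Lemma \ref{relcupdel} is applicable --- which is exactly what Remarks \ref{importacxy} provide.
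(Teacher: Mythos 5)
Your proof is correct, and it follows essentially the route the paper intends: the corollary is stated without proof immediately after Theorem~\ref{intcohmpd}, as a direct consequence of the second isomorphism in \eqref{intisos} together with Lemma~\ref{cokp3}, with $\delta(c^iz^j)=c^i\delta_j$ justified via Lemma~\ref{relcupdel} and the fact that $c=p_3^*(c)$ from Lemma~\ref{p3action} and Remarks~\ref{importacxy}. You have simply made explicit the ingredients that the paper treats as already assembled in the proof of Theorem~\ref{intcohmpd}.
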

By analogy with the mod $2$ case, Theorem \ref{intcohmpd} may now 
be applied to study the lower row of the cohomology ladder  
\[
\begin{CD}
\dots\!@<{\delta}<<\! H^*(RK^\infty_\ssp)@<{i^*}<<H^*(B_\ssp)
@<{b^*}<<\,H^*(\MPd)@<{\delta}<<\!\dots\\[-.1em]
@.\!\!@AA{\pi\smash{_2^*}}A@AA{\pi^*}A@AA{id}A\\[-.1em]
\dots\!@<{\delta}<<\! H^*(\bK P^\infty_\ssp)@<{i_\varDelta^*}<<H^*(\sptssp)
@<{b_\varDelta^*}<<\,H^*(\MPd)@<{\delta}<<\!\dots
\end{CD}
\]
associated to \eqref{cofibsi}.  

\begin{rems}\label{remslad}
Since $\pi_2^*\colon\bZ\om[z]\to\bZ\om[c,z]/(2c)$ is the 
canonical inclusion, Theorem \ref{intcohmpd} shows that the
lower $\delta$ acts by $\delta(z^j)=\delta_j$ in $H^{kd+1}(\MPd)$
for every $j>0$. Since $2\delta_j=0$, the kernel of $\delta$ is 
the principal ideal $(2z)$, and is additively generated by $2z^j$ 
in each dimension $jd$.
\end{rems}

These observations lead to additive descriptions of $H^{od}(\spt)$ 
and $H^{ev}(\spt)$.
\begin{prop}\label{odcohspt}
There are isomorphisms
\[
H^{od}(\spt)\;\cong\;\zmt\om\br{u_{i,j}:0<i<jd/2}
\]
of graded abelian groups, where $u_{i,j}$ has dimension $2i+jd+1$;
in particular, $H^{od}(\spt)=0$ only in dimensions 
$1,\om3$ and $5$, and also $9$ when $\om\bK=\bH$.  
\end{prop}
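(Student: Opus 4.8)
The plan is to extract the additive structure of $H^{od}(\spt)$ directly from the long exact cohomology sequence associated to the lower cofibre sequence of \eqref{cofibsi}, using the information already assembled in Theorem \ref{intcohmpd}, Corollary \ref{preuij}, and Remarks \ref{remslad}. The relevant portion of the ladder gives, in each odd dimension, a short exact sequence
\[
0\lla H^{od}(\spt)\stackrel{b_\varDelta^*}{\lla}H^{od}(\MPd)
\stackrel{\delta}{\lla}\Ker\bigl(\delta\colon H^{ev-1}(\KPI)\to H^{ev}(\MPd)\bigr)\lla 0,
\]
together with the observation from Remarks \ref{remslad} that the lower $\delta$ sends $z^j$ to $\delta_j$, so that its image is exactly the order-$2$ subgroup generated by the $\delta_j$ and its kernel is the ideal $(2z)$. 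Consequently $b_\varDelta^*$ identifies $H^{od}(\spt)$ with the cokernel of the map $H^{ev-1}(\KPI)\to H^{od}(\MPd)$, $z^j\mapsto\delta_j$.

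First I would invoke Corollary \ref{preuij}, which presents $H^{od}(\MPd)$ as the free $\zmt$-module on the symbols $c^i\delta_j$ with $0\le i<jd/2$. Quotienting by the submodule generated by the $\delta_j$ (the $i=0$ generators) leaves precisely the free $\zmt$-module on the $c^i\delta_j$ with $1\le i<jd/2$; relabelling $c^i\delta_j$ as $u_{i,j}$ gives the claimed basis, and the dimension count is immediate since $c^i\delta_j$ lives in degree $2i+(jd+1)$. For the final sentence I would simply enumerate the pairs $(i,j)$ with $0<i<jd/2$: when $d=2$ one needs $0<i<j$, so the smallest total degree $2i+2j+1$ is attained at $(i,j)=(1,2)$ giving degree $7$, and degrees $1,3,5$ are unattainable; when $d=4$ one needs $0<i<2j$, the minimal case being $(1,1)$ in degree $2+4+1=7$, and now degrees $1,3,5,9$ are all missing (degree $9$ would force $2i+4j=8$ with $0<i<2j$, which has no solution).

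I would also need to confirm that the short exact sequence above is genuinely exact on the left, i.e.\ that $b_\varDelta^*$ is injective in odd degrees; this follows because the preceding term in the long exact sequence is $H^{ev}(\KPI)$ mapping via $i_\varDelta^*$, and one checks (as in the mod $2$ discussion around \eqref{nakses}, now using that the lower $\delta$ is injective on the even part modulo the torsion just described) that $i_\varDelta^*$ is surjective in even degrees — concretely, $z^k$ pulls back from $H^{ev}(\spt)$ because $\delta(z^k)=\delta_k$ has order $2$ and $z^k$ itself lifts the class $2z^k\in\Ker\delta$ up to the image of $i_\varDelta^*$. The one point demanding a little care is keeping the grading bookkeeping straight between $H^{od}(\MPd)$, which sits in degrees $\equiv 1\pmod d$, and $H^{od}(\spt)$: the cofibre sequence shifts by the connecting map, and I would state explicitly that the generator labelled $u_{i,j}=b_\varDelta^*(c^i\delta_j)$ has degree $2i+jd+1$, matching the statement.

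The main obstacle I anticipate is not any single deep step but rather verifying that the naive quotient description really is correct as graded abelian groups and not merely up to extension — in particular ensuring no carrying of $2$-torsion between adjacent degrees is introduced by the connecting homomorphism. Since everything in sight in odd degrees is already killed by $2$ (Corollary \ref{preuij}) and the kernel of the lower $\delta$ is the torsion-free ideal $(2z)$, the sequence splits and the obstruction vanishes; but I would spell this out rather than leave it implicit, as it is the only place where the argument could go wrong.
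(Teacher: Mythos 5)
Your first two paragraphs carry out the same argument as the paper: use the lower long exact sequence of \eqref{cofibsi}, note that $b^*_\varDelta$ surjects onto $H^{od}(\spt)$ because $H^{od}(\KPI)=0$, identify its kernel with the image of $\delta$ (i.e.\ the span of the $\delta_j$, $j\ge 1$), quotient the basis of $H^{od}(\MPd)$ from Corollary \ref{preuij}, and then enumerate the nonzero degrees. That all matches the paper's proof, and the final degree count for $d=2$ and $d=4$ is correct.

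Your third paragraph, however, contains genuine errors, though fortunately they concern a verification that is not needed. You claim $b^*_\varDelta$ is injective in odd degrees and $i^*_\varDelta$ is surjective in even degrees. Both are false: the kernel of $b^*_\varDelta$ in degree $jd+1$ is $\zmt\br{\delta_j}\ne 0$ (this is precisely why the $\delta_j$ get killed), and the image of $i^*_\varDelta$ in even degrees is the proper ideal $(2z)$, not all of $\bZ[z]$, since $\delta(z^k)=\delta_k\ne 0$ means $z^k$ itself does \emph{not} lift to $H^{ev}(\spt)$ (only $2z^k$ does). The short exact sequence you display is also written incorrectly: the term on the right should be $H^{od-1}(\KPI)\big/\Ker\delta \cong \zmt$, not $\Ker\delta$, and the statement that $H^{od}(\MPd)$ is concentrated in degrees $\equiv 1\pmod d$ is wrong (the classes $c^i\delta_j$ with $i>0$ occupy other residues). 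None of this damages the actual computation in your first two paragraphs — the needed facts are surjectivity of $b^*_\varDelta$ in odd degrees plus identification of its kernel, both of which you state correctly there — but you should delete the spurious injectivity/surjectivity claims rather than try to justify them.
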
 
\begin{proof}
Remarks \ref{remslad} imply that $b^*_\varDelta(\delta_j)=0$ in 
$H^{jd+1}(\spt)$, and that 
\begin{equation}\label{defuij}
u_{i,j}\;\letbe\;b^*_\varDelta(c^i\delta_j)
\end{equation}
is nonzero and has order $2$ in $H^{2i+jd+1}(\spt)$, for every 
$0<i<jd/2$. By Corollary \ref{preuij}, these elements exhaust the 
image of $b^*_\varDelta$, which is epic in odd dimensions because 
$H^{od}(\KPI)$ is zero. 
\end{proof}
Note that $u_{jd/2\om ,\om j}=\sum_{0<i\leq j}\binom{j}{i}
u_{(j-i)d/2\om ,\om j+i}$ 
in $H^{2jd+1}(\spt)$, for any $j>0$. 
\begin{rem}\label{redmonic2}
Proposition \ref{odcohspt} shows that mod $2$ reduction is monic 
on $H^{od}(\spt)$, by analogy with Remark \ref{redmonic}.
\end{rem}

Remarks \ref{remslad} confirm the existence of elements $n_p$ in 
$H^{pd}(\spt)$ such that $i_\varDelta^*(n_p)=2z^p$, for every $p>0$;
they are well-defined up to the image of $b^*_\varDelta$. 

\begin{prop}\label{evdimfree}
The graded abelian group $H^{ev}(\spt)$ is torsion-free, and admits 
a canonical choice of generator $n_p$ in each dimension $pd$. 
\end{prop}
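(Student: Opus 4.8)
The plan is to read both assertions off the long exact cohomology sequence of the lower cofibre sequence of \eqref{cofibsi}, using Theorem \ref{intcohmpd} and Remarks \ref{remslad} as input, and then to normalise the generators by pushing everything into $H^*(B)$ via $\pi^*$.

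First I would establish torsion-freeness. Fix an even integer $N$. Since $H^*(\KPI)$ vanishes in odd degrees, $H^{N-1}(\KPI)=0$, so in
\[
0=H^{N-1}(\KPI)\xrightarrow{\;\delta\;}H^N(\MPd)\xrightarrow{\,b_\varDelta^*\,}H^N(\spt)\xrightarrow{\,i_\varDelta^*\,}H^N(\KPI)\xrightarrow{\;\delta\;}H^{N+1}(\MPd)
\]
the map $b_\varDelta^*$ is monic on $H^N(\MPd)$, and $H^N(\MPd)$ is torsion-free by Theorem \ref{intcohmpd}. If $N$ is even but not a multiple of $d$ — which happens only for $d=4$ — then $H^N(\KPI)=0$ as well, so $b_\varDelta^*\colon H^N(\MPd)\to H^N(\spt)$ is an isomorphism and $H^N(\spt)$ is torsion-free. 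If $N=pd$, then by Remarks \ref{remslad} the lower $\delta$ sends $z^p$ to $\delta_p\neq 0$ and kills $2z^p$, so $\Ima(i_\varDelta^*)=\Ker\big(\delta\colon H^{pd}(\KPI)\to H^{pd+1}(\MPd)\big)=\bZ\opfm 2z^p$; thus $H^{pd}(\spt)$ is an extension of the free group $\bZ$ by the torsion-free group $H^{pd}(\MPd)$, hence torsion-free, and the extension splits as $H^{pd}(\spt)=b_\varDelta^*\big(H^{pd}(\MPd)\big)\oplus\bZ\opfm n_p$ for any lift $n_p$ of $2z^p$ (these are the classes of Remarks \ref{remslad}, well-defined modulo $\Ima b_\varDelta^*$).

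Next I would make the choice of $n_p$ canonical. Since $H^{ev}(\spt)$ is now torsion-free and $\pi^*$ is an isomorphism after inverting $2$ (Vietoris-Begle), the map $\pi^*\colon H^{ev}(\spt)\to H^{ev}(B)\cong Z^*=\bZ\opfm[c,m,y]/(2c,cm)$ is injective. For $p>0$ I would take $w_p\letbe 2\om y^{p/2}$ when $p$ is even and $w_p\letbe m\opfm y^{(p-1)/2}$ when $p$ is odd; using Lemma \ref{p3action} ($i^*(m)=2z$, $i^*(y)=(c^{d/2}+z)z$, $i^*(c)=c$) and the fact that the factor $2$ in $i^*(w_p)$ together with $2c=0$ annihilates every monomial involving $c$, one checks $i^*(w_p)=2z^p$. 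For any lift $\tilde n_p$ of Remarks \ref{remslad} one has $i^*\big(\pi^*(\tilde n_p)\big)=\pi_2^*\big(i_\varDelta^*(\tilde n_p)\big)=2z^p=i^*(w_p)$, so $w_p-\pi^*(\tilde n_p)$ lies in $\Ker i^*$, which by \eqref{intisos} and the commutativity of \eqref{cofibsi} equals $b^*H^{pd}(\MPd)=\pi^*b_\varDelta^*H^{pd}(\MPd)\subseteq\pi^*H^{pd}(\spt)$; hence $w_p\in\pi^*H^{pd}(\spt)$ and, by injectivity, there is a unique $n_p\in H^{pd}(\spt)$ with $\pi^*(n_p)=w_p$ (and $n_0\letbe 1$). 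Since $\pi_2^*$ is monic and $\pi_2^*\big(i_\varDelta^*(n_p)\big)=i^*(w_p)=2z^p$, this $n_p$ still satisfies $i_\varDelta^*(n_p)=2z^p$, so it is one of the generators found above, now pinned down canonically because $m$ and $y$ are canonical classes of $H^{ev}(B)$.

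The diagram chase itself is routine. The one genuinely delicate input is the non-vanishing of every $\delta_p$ (Proposition \ref{valdelk}): without it one could only say $\Ima i_\varDelta^*\supseteq\bZ\opfm 2z^p$, and torsion-freeness would fail. I expect the main obstacle to be bookkeeping — keeping the integral classes $m,y,c$ distinct from their mod $2$ namesakes when invoking Lemma \ref{p3action}, and verifying that $w_p$ lies in the correct coset of $\Ker i^*$ for \emph{every} $p$ rather than just in the first few degrees.
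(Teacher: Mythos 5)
Your proof is correct and follows essentially the same route as the paper: you extract the split short exact sequence $0\to H^{pd}(\MPd)\to H^{pd}(\spt)\to\bZ\cdot 2z^p\to 0$ from the lower long exact sequence, deduce torsion-freeness because the extension splits and $H^{pd}(\MPd)$ is torsion-free, and then normalise $n_p$ by requiring $\pi^*(n_p)=2y^{p/2}$ (or $m\om y^{(p-1)/2}$) exactly as the paper does, using $\Ker i^*=(m^2-4y)=\pi^*b_\varDelta^*\bigl(H^{pd}(\MPd)\bigr)$ to correct an arbitrary lift into the canonical one. The only cosmetic difference is that you derive injectivity of $b_\varDelta^*$ directly from $H^{od}(\KPI)=0$, while the paper routes it through the composite $b^*=\pi^*\circ b_\varDelta^*$ and Lemma \ref{kerp3}, and the paper phrases the existence step as a contradiction where you argue directly.
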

\begin{proof}
By Lemma \ref{kerp3}, $b^*$ injects $H^{ev}(\MPd)$ into 
$H^{ev}(B)$ as the ideal $(m^2-4y)$. Thus $b^*_\varDelta$ 
injects $H^{ev}(\MPd)$ as a subring $Q^*$ of $H^{ev}(\spt)$, 
and $\pi^*$ maps $Q^*$ isomorphically to $(m^2-4y)$. So 
there is a split short exact sequence
\begin{equation}\label{sesevdim}
0\llla\bZ\br{2z^p}\stackrel{i^*_\varDelta}{\llla}\bZ\br{n_p}\oplus
Q^{pd} \stackrel{b^*_\varDelta}{\llla}\bZ\br{m^qy^r\? s}\llla 0
\end{equation}
for each $p\geq 1$, where $q$ and $r$ range over $q,\, r\geq 0$
such that $q+2r+2=p$. As $p$ varies, \eqref{sesevdim} confirms 
that $H^{ev}(\spt)$ is torsion free.

Since $\pi^*$ is a $\bZ[1/2]$ isomorphism, it is monic on
$H^{ev}(\spt)$. In dimension $2kd$, therefore, $n_{2k}$ may be
specified uniquely by $\pi^*(n_{2k})=2y^k$. For, if
no such element exists, then $\pi^*(n_{2k})-2y^k=\alpha$ for some
nonzero $\alpha$ in $(m^2-4y)$; hence $\pi^*(\alpha')=\alpha$ for some
$\alpha'$ in $Q^{2kd}$, and $\pi^*(n_{2k}-\alpha')=2y^k$, which is a
contradiction.  The same argument works in dimension $(2k-1)d$, after
replacing $2y^k$ by $my^{k-1}$. In either dimension $n_p$ is an
additive generator.
\end{proof}

Henceforth, $n_p$ is assumed to be chosen in this way, for any
$p$. The cases
\begin{equation}\label{defgh}
g\;\letbe\; n_1\sands 
h\;\letbe\; n_2
\end{equation}
in $H^d(\spt)$ and $H^{2d}(\spt)$ are sufficiently important to merit 
special notation.
\begin{thm}\label{intcohspt}
There is an isomorphism
\[
H^*(\sptssp)\;\cong\;
\bZ\opfm[\om h^p\!/2^{p-1}\!,\,g^qh^r\!/2^r\!,\,u_{i,j}]\,/\,I,
\]
where $p,\,q\geq 1$, $r\geq 0$ and $0<i<jd/2$, and $I$ denotes the 
ideal
\[
(2u_{i,j},\,u_{i,j}u_{k,l},\,u_{i,j}h^p\!/2^{p-1}\!,\,
u_{i,j}g^qh^r\!/2^r)\,;
\]
the classes $g$, $h$, and $u_{i,j}$ are those of \eqref{defgh} and 
\eqref{defuij} respectively.
\end{thm}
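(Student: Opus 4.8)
The plan is to assemble the ring structure from the additive data already in hand. Propositions \ref{odcohspt} and \ref{evdimfree} together give the underlying graded abelian group: in odd degrees it is the free $\zmt$-module on the $u_{i,j}$ with $0<i<jd/2$, and in even degrees it is free abelian with a canonical generator $n_p$ in each dimension $pd$. So the first step is to translate these $n_p$ into the floating-point style generators $h^p\!/2^{p-1}$ and $g^qh^r\!/2^r$ appearing in the statement. Since $g=n_1$ and $h=n_2$ satisfy $\pi^*(g)=my^0\cdots$ — more precisely $\pi^*(g)=m$ and $\pi^*(h)=2y$ — I would use the fact that $\pi^*$ is a monomorphism on $H^{ev}(\spt)$ (being a $\bZ[1/2]$-isomorphism) to carry all multiplicative questions into $H^{ev}(B)\cong Z^*=\bZ[c,m,y]/(2c,cm)$, where computation is transparent. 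Under $\pi^*$, the element $n_{2k}$ maps to $2y^k$ and $n_{2k-1}$ to $my^{k-1}$; hence $\pi^*(h^p)=2^p y^p$ while $\pi^*(n_{2p})=2y^p$, which forces the divisibility $h^p=2^{p-1}n_{2p}$ in $H^{ev}(\spt)$ and justifies writing $n_{2p}=h^p\!/2^{p-1}$. Similarly $\pi^*(g^qh^r)=m^q(2y)^r=2^r m^q y^r$ and $\pi^*(n_{2r+q})=m^q y^r$ for $q$ odd, giving $g^qh^r=2^r n_{2r+q}$, i.e. $n_{2r+q}=g^qh^r\!/2^r$; one checks these two families exhaust the $n_p$ and overlap consistently.

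The second step is to pin down the multiplicative relations. All products of even-dimensional classes are determined by the injection $\pi^*$ into $Z^*$, so the "relations" among $h^p\!/2^{p-1}$ and $g^qh^r\!/2^r$ are precisely the identities forced by $\pi^*$; because $Z^*$ is (modulo its $c$-torsion) a polynomial ring on $m=\pi^*(g)$ and $y$, there are \emph{no} further relations among these generators beyond the tautological ones encoded in the notation — the torsion-free even part is just $\bZ[g,h]$ rewritten with denominators, as the remark after Theorem \ref{distil} illustrates with $g\cdot h=2\cdot gh\!/2$. For the odd classes, Proposition \ref{odcohspt} already gives $2u_{i,j}=0$ and the linear dependence $u_{jd/2,j}=\sum\binom{j}{i}u_{(j-i)d/2,j+i}$, which the generator set $0<i<jd/2$ builds in automatically. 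The products $u_{i,j}u_{k,l}=0$ and $u_{i,j}\cdot(\text{even class})$: for the latter I would argue that $b^*_\varDelta\colon H^{od}(\MPd)\to H^{od}(\spt)$ is a ring map with $H^{od}(\MPd)$ a square-zero ideal annihilated by $c$ and $m$ (Theorem \ref{intcohmpd}), and that multiplication by $n_p$ on $\mathrm{im}\,b^*_\varDelta$ is computed via the $Z^*$-module structure; since $\pi^*(n_p)$ lies in the ideal $(m^2-4y)$, hitting $c^i\delta_j$ with anything in $(m^2-4y)$ lands in the image of $\delta$ (using $m\delta_j=0$, $c\delta_j$-relations from Theorem \ref{intcohmpd}), hence is zero in $H^*(\spt)$. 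That is the content of $u_{i,j}h^p\!/2^{p-1}=u_{i,j}g^qh^r\!/2^r=0$, i.e. of the ideal $I$.

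The third step is the formal verification that the evident surjection from $\bZ[h^p\!/2^{p-1},\,g^qh^r\!/2^r,\,u_{i,j}]/I$ onto $H^*(\sptssp)$ is an isomorphism. Surjectivity is immediate from the additive generation just recalled. For injectivity I would count: in each degree the source ring, modulo $I$, has exactly one $\bZ$ in even degrees $pd$ (spanned by the appropriate $g^qh^r\!/2^r$ or $h^p\!/2^{p-1}$) and a $\zmt$-vector space in odd degrees with basis the $u_{i,j}$ of that degree — matching Propositions \ref{odcohspt} and \ref{evdimfree} on the nose, provided one verifies that $I$ contains no "hidden" relations collapsing an even generator, which follows because $\pi^*$ is monic on the even part and the images $2^r m^q y^r$ of distinct $g^qh^r$ are $\bZ$-linearly independent in $Z^*$. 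Then a degreewise comparison of finitely generated abelian groups of the same rank and the same torsion, under a surjection, gives an isomorphism.

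The main obstacle I anticipate is not any single relation but the bookkeeping of the \emph{fractional notation}: making rigorous the claim that the symbols $h^p\!/2^{p-1}$ and $g^qh^r\!/2^r$, which a priori live only in $H^*(\spt)\otimes\bQ$, are honest classes in integral cohomology, that the two families glue without clash on their overlap (powers of $h$ alone versus $q=0$), and that the product $g^qh^r\!/2^r\cdot g^{q'}h^{r'}\!/2^{r'}$ expands correctly as $2^{\,r+r'-(r+r')}(\text{new generator})$ with the binomial bookkeeping consistent — all of which must be checked through $\pi^*$ inside $Z^*=\bZ[c,m,y]/(2c,cm)$ and then pulled back. Managing the interaction of this even-dimensional polynomial-with-denominators ring with the square-zero odd-dimensional $\zmt$-module, and confirming that $I$ is exactly (not merely contained in) the relation ideal, is where the care is needed; everything else is a transcription of \S\ref{inco}.
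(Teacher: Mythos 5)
Your strategy follows the paper's in outline: pass through the monomorphism $\pi^*$ into $Z^*$ to control the even-dimensional part, translate the canonical generators of Proposition~\ref{evdimfree} into the fractional notation, and kill products involving the odd classes $u_{i,j}$. Be aware, though, that the identification $n_{2r+q}=g^qh^r/2^r$ for ``$q$ odd'' is already wrong when $q>1$: by construction $\pi^*(n_{2k-1})=my^{k-1}$, so for instance $n_3$ is $gh/2$, not $g^3$; the correct picture is that in each dimension $kd$ the single $n_k$ sits alongside a complementary basis coming from $Q^*=\mathrm{im}\,b^*_\varDelta$ via the short exact sequence~\eqref{sesevdim}, and the monomials $g^qh^r/2^r$ with varying $q$ arise only after a change of basis against $g^2-2h=b^*_\varDelta(s)$, as the paper carries out.

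The genuine gap is in your step showing $u_{i,j}\cdot n_p=0$. You assert that $\pi^*(n_p)$ lies in the ideal $(m^2-4y)\subset Z^*$, but this is false: $\pi^*(n_{2k})=2y^k$ and $\pi^*(n_{2k-1})=my^{k-1}$ by the very choice made in the proof of Proposition~\ref{evdimfree}, and neither is a $\bZ[m,y]$-multiple of $m^2-4y$ (specialise $m\mapsto 2$, $y\mapsto 1$: the generator maps to $0$ while $2y^k\mapsto 2$ and $my^{k-1}\mapsto 2$). What \emph{does} land in $(m^2-4y)$ under $\pi^*$ is exactly the complementary summand $Q^*$, not the $n_p$; your argument conflates the two, so it collapses to showing $u_{i,j}\cdot q=0$ for $q\in Q^*$, which is not what is needed. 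The paper closes this step by a different mechanism: reduce mod~$2$, observe that $\rho(u_{i,j})=b^*_\varDelta(a^{2i}\delta_j)$ lies in the square-zero part of $H^*(\spt;\zmt)$ so that by Remarks~\ref{mod2prods} it annihilates everything, in particular $\rho(u_{i,j}n_p)=\rho(u_{i,j}g^qh^r/2^r)=0$, and then use Remark~\ref{redmonic2} (mod~$2$ reduction is monic on $H^{od}(\spt)$) to conclude the integral vanishing. You need this, or some correct replacement, before the proof is complete.
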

\begin{proof}
Since $\pi^*$ embeds $H^{ev}(\spt)$ in
$H^{ev}(B;\bZ[1/2])$, the canonical additive generators $g$ and $h$
may be identified with their images $m$ and $2y$ respectively;
similarly, the generators $n_{2k-1}$ and $n_{2k}$ may be identified
with $my^{k-1}$ and $2y^k$, and therefore with $gh^{k-1}/2^{k-1}$
and $h^k/2^{k-1}$. In particular, $b^*_\varDelta(s)=g^2-2h$.

From this viewpoint, the short exact sequence \eqref{sesevdim}
prescribes additive bases
\[
h^k\!/2^{k-1}\!,\,h^{k-1}(g^2\?-\?2h)/2^{k-1}\!,\,\dots,\,
g^{2k-2r-2}h^r(g^2\?-\?2h)/2^r,\,\dots,\,g^{2k-2}(g^2\?-\?2h)
\]
in dimensions $2kd$, and
\[
gh^k\!/2^k\!,\,gh^{k-1}(g^2\?-\?2h)/2^{k-1}\!,\,\dots,\,
g^{2k-2r-1}h^r(g^2\?-\?2h)/2^r,\,\dots,\,g^{2k-1}(g^2\?-\?2h)
\]
in dimensions $(2k+1)d$, where $0\leq r<k$. These are equivalent 
to bases
\[
h^k\!/2^{k-1}\!,\dots,
g^{2k-2r}h^r\!/2^r\!,\dots,g^{2k}\!\sands\!
gh^k\!/2^k\!,\dots,g^{2k-2r+1}h^r\!/2^r\!,\dots,g^{2k+1}
\]
respectively, which exhibit the stated multiplicative structure on 
$H^{ev}(\spt)$.

For the products in $I$, note that $u_{i,j}u_{k,l}=0$ because
$H^{ev}(\spt)$ is torsion free, by Proposition \ref{evdimfree}.
Furthermore, $\rho(u_{i,j})=b_{\varDelta}^*(a^{2i}\delta_j)$ in
$H^{2i+jd+1}(\spt;\zmt)$, so
$\rho(u_{i,j}n_p)=\rho(u_{i,j}g^qh^r/2^r)=0$ by Remarks
\ref{mod2prods}. Finally, Remark \ref{redmonic2} shows that
$u_{i,j}n_p=u_{i,j}g^qh^r/2^r=0$, as required.
\end{proof}

%
%
%
%
%
%
%
%
%

\section{Mod 2 truncation}\label{motwtr}

In the final two sections, it remains to describe how the cohomology rings 
of $\varGamma_n$, $B_n$, and $\sptn$ are obtained from the calculations 
above, using the restriction homomorphisms induced by inclusion. All the
resulting truncations are valid for any $n\geq 1$, although the case $n=1$ 
is degenerate. For notational simplicity, cohomology classes and their 
restrictions are written identically in both sections. 

This section focuses mainly on mod $2$ cohomology.

The first space to consider is $\varGamma_n$. As described in
Definition \ref{defgamman}, it is the total space 
$\bR P(\chi_n)$ of the projectivisation of the restriction to
$Gr_{n+1,2}$ of the universal $Spin(d+1)$ bundle over
$BO_{\bK}(2)$. By Corollary \ref{dadhalg}, it is also homotopy 
equivalent to the configuration space $\mathcal{C}_2(\KPn)$, whose 
one-point compactification is the Thom space $\Th(\theta_n)$.

The cohomology ring of the Grassmannian is given by the truncation  
\[
H^*((Gr_{n+1,2})_{\ssp};R)\;\cong\;R[l_1,l_2]/
(\sigma_n,\sigma_{n+1})
\]
of $H^*(BO_{\bK}(2);R)$, where $R=\bZ$ or $\zmt$ and 
the $\sigma_k$ satisfy
\begin{equation}\label{ffsigtil}
\sigma_{k+2}=-l_1\sigma_{k+1}-l_2\sigma_k
\sts{with}\sigma_0=1,\;\sigma_1=-l_1
\end{equation}
in $R[l_1,l_2]$. Thus \eqref{ffsigtil} fits into the framework of 
\eqref{ff}, and leads to 
\begin{equation}\label{yaspoly}
\sigma_k\;=\;
\sum_{0\leq i\leq k/2}(-1)^{k-i}\binom{k-i}{i}l_1^{k-2i}l_2^i
\end{equation}
in dimension $kd$, for $k\geq 1$; this agrees with Yasui 
\cite[(4.7)]{yas:rsp}. Proceeding as for \eqref{isobst}, 
$H^*((\varGamma_n)_{\ssp};\zmt)$ may then be identified with
the $\zmt$-algebra
\[
G^*_n\;\letbe\;\zmt\om[\om a,m,y]/(am,\nu_n,\nu_{n+1}),
\]
where $\nu_k=\sigma_k(a^d\2+\2 m,\om y)$
in $H^{kd}(\varGamma;\zmt)$, for $k\geq 1$.

The analogues of \eqref{gyse} and \eqref{cohthsp} confirm the 
existence of an isomorphism 
\begin{equation}\label{cohtruncthsp}
H^*(\Th(\theta_n);\zmt)\;\cong\;t\om G^*_n[t]/(t^2\?+m\om t),
\end{equation}
where $t$ is pulled back from the universal Thom class 
in $H^d(\MPd;\zmt)$. The associated Euler class is $m$, as before. 
\begin{rem}\label{cohmpdtrunc}
Care is required to work with the restriction homomorphism on
$H^*(\MPd;\zmt)$. Its kernel is the ideal $(\nu_n,\nu_{n+1})\om t$, so 
\eqref{cohtruncthsp} may be rewritten as
\[
H^*(\Th(\theta_n);\zmt)\;\cong\;
H^*(\MPd;\zmt)/(\nu_n,\nu_{n+1})\om t
\]
for any $n\geq 1$. For example, $\nu_3=x^3$ in $G_2^{3d}$, so 
$a^2x^3\om t$ lies in $(\nu_3)\om t$ and is $0$ in 
$H^{4d+2}(\Th(\theta_3);\zmt)$; however, $a^2x^3\om t$ does 
not lie in the ideal $(\nu_3\om t)$.
\end{rem}

In order to complete the calculation of $H^*(\sptn;\zmt)$, it suffices 
to apply restriction to \eqref{nakses}, and consider the resulting exact 
sequence
\begin{equation}\label{nakladtrunc}
\dots\!\lla\!H^*(\sptn;\zmt)\!\stackrel{b_\varDelta^*}{\lla}\!
H^*(\Th(\theta_n);\zmt)\!\stackrel{\delta}{\lla}\!H^{*-1}(\KPn;\zmt)
\!\lla\! \dots.
\end{equation} 
Of course, Nakaoka's results show that \eqref{nakladtrunc} is also
short exact. In the current context, this follows from the fact that 
restriction truncates $H^*(\KPI;\zmt)$ and $H^*(\MPd;\zmt)$ by 
the ideals $(z^{n+1})$ and $(\nu_n,\om\nu_{n+1})\om t$ 
respectively. So restriction is an isomorphism on 
$H^*(\MPd;\zmt)$ in dimensions 
$\leq(n+1)d-1$, and maps $\delta_k$ to its non-zero namesake in 
$H^{kd+1}(\Th(\theta_n);\zmt)$ for each $1\leq k\leq n$. Hence 
$\delta$ in \eqref{nakladtrunc} is monic, as required.
\begin{thm}\label{cohsptnmod2}
For any $n\geq 1$, there are isomorphisms
\begin{align*}
H^*(\sptn;\zmt)&\;\cong\;
t\om G_n^*[t]/(t^2\? +m\om t,\,\delta_k:0<k\leq n)\\
&\;\cong\;H^*(\spt;\zmt)\big/
(\nu_n,\nu_{n+1})\om t/(a\nu_{n+l}\om t:l\geq 0)
\end{align*}
of graded $\zmt$-algebras.
\end{thm}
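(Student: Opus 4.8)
The plan is to prove the two displayed isomorphisms separately and then observe that both sides describe $H^*(\MPd;\zmt)$ modulo the same ideal.

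For the first isomorphism I would follow the proof of Theorem~\ref{cohsptmod2} essentially verbatim, with the truncated data. Restricting \eqref{nakses} along $\sptn\subset\spt$, $\Th(\theta_n)\subset\MPd$ and $\KPn\subset\KPI$ gives \eqref{nakladtrunc}, which is short exact since $\delta$ is monic (established just above the theorem). Hence $b_\varDelta^*$ is epic and $H^*(\sptn;\zmt)\cong H^*(\Th(\theta_n);\zmt)\big/\delta\big(H^{*-1}(\KPn;\zmt)\big)$ as rings. Now $H^*(\Th(\theta_n);\zmt)\cong tG_n^*[t]/(t^2+mt)$ by \eqref{cohtruncthsp}, while $H^*(\KPn;\zmt)=\zmt[z]/(z^{n+1})$ has additive basis $z^0,\dots,z^n$ and $\delta(z^k)=\delta_k$, the restriction of its namesake (the polynomial \eqref{dellam} is unchanged). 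Since every monomial of $\delta_k$ carries a positive power of $a$, the relations $t^2=mt$ and $am=0$ force the $\zmt$-span of $\{\delta_k:0<k\le n\}$ to equal the ideal it generates in $tG_n^*[t]/(t^2+mt)$, exactly as in Remark~\ref{mod2prods}; passing to the cokernel of $\delta$ then yields $H^*(\sptn;\zmt)\cong tG_n^*[t]/(t^2+mt,\delta_k:0<k\le n)$.

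For the second isomorphism the bridge is the defining relation $G_n^*=G^*/(\nu_n,\nu_{n+1})$. By Remark~\ref{cohmpdtrunc} this identifies $H^*(\Th(\theta_n);\zmt)$ with $H^*(\MPd;\zmt)/(\nu_n,\nu_{n+1})t$, so the first isomorphism presents $H^*(\sptn;\zmt)$ as $H^*(\MPd;\zmt)$ modulo $(\nu_n,\nu_{n+1})t+(\delta_1,\dots,\delta_n)$, whereas Theorem~\ref{cohsptmod2} presents $H^*(\spt;\zmt)/(\nu_n,\nu_{n+1})t$ as $H^*(\MPd;\zmt)$ modulo $(\nu_n,\nu_{n+1})t+(\delta_k:k>0)$. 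Everything therefore reduces to the identity $\delta_{j+1}=(a\nu_j)t$ in $H^*(\MPd;\zmt)$, valid for all $j\ge0$: from \eqref{ffsigtil} the classes $(a\nu_j)t$ satisfy $(a\nu_{j+2})t=a^d(a\nu_{j+1})t+y(a\nu_j)t$ once $am=0$ is used to collapse $a(a^d+m)=a^{d+1}$, and they share the initial values $(a\nu_0)t=at$ and $(a\nu_1)t=a^{d+1}t$ with the $\delta_{j+1}$ of \eqref{recur}. Granting this, \eqref{ffsigtil} also expresses each $\sigma_j$ with $j\ge n$ inside $(\sigma_n,\sigma_{n+1})$, hence $\nu_j\in(\nu_n,\nu_{n+1})$ in $G^*$ for $j\ge n$; so $\delta_k=(a\nu_{k-1})t\in(\nu_n,\nu_{n+1})t$ for every $k>n$, and $a\nu_{n+l}t=\delta_{n+l+1}$ for $l\ge0$. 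Thus $(\delta_k:k>n)$ and $(a\nu_{n+l}t:l\ge0)$ both lie in $(\nu_n,\nu_{n+1})t$, the three ideals above coincide, and the two quotient rings agree — the factor $(a\nu_{n+l}t:l\ge0)$ being redundant but recording precisely which Fibonacci relations disappear under truncation.

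The obstacle is entirely bookkeeping. The one substantive step is the identity $\delta_{j+1}=(a\nu_j)t$, which is what converts the Grassmannian truncation $\nu_n=\nu_{n+1}=0$ into the vanishing of $\delta_k$ for $k>n$; after that one has only to track ideals carefully inside the non-unital ring $H^*(\MPd;\zmt)$, respecting the distinction flagged in Remark~\ref{cohmpdtrunc} between the submodule $(\nu_n,\nu_{n+1})t$ and the smaller ideal $(\nu_n t,\nu_{n+1}t)$, and checking at each stage that the cokernels formed really are quotients by honest ideals. None of this is deep, but getting the indexing of the $\delta_k$, $\sigma_k$ and $\nu_k$ exactly right is the only place an error could creep in.
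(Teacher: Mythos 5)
Your proposal is correct and follows essentially the same route as the paper: adapt the proof of Theorem~\ref{cohsptmod2} to the truncated Gysin/Nakaoka sequence for the first isomorphism, then obtain the second by establishing the identity $\delta_{r+1}=a\nu_r\om t$ in $H^*(\MPd;\zmt)$ and matching ideals. The one genuine variation is how you derive that identity: the paper compares the closed-form binomial expressions \eqref{dellam} and \eqref{yaspoly} directly (using $ax=a^{d+1}$), whereas you observe that $(a\nu_j)t$ and $\delta_{j+1}$ satisfy the same second-order recurrence with the same initial values, which follows from \eqref{ffsigtil} and \eqref{recur} after collapsing $a(a^d+m)=a^{d+1}$ via $am=0$. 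Your recurrence argument is a legitimate and arguably cleaner alternative, avoiding coefficient comparison and sitting more naturally beside the generalised-Fibonacci framework of \eqref{ff}; beyond that, the ideal bookkeeping that you carry out — including the implicit dimension count showing $(\nu_n,\nu_{n+1})t\cap(\delta_k:k>0)=(\delta_k:k>n)$ — is the same content as in the paper, just spelled out more explicitly.
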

\begin{proof}
The first isomorphism arises by adapting the proof of Theorem 
\ref{cohsptmod2} to the short exact sequence \eqref{nakladtrunc}. 
In particular, restriction induces an epimorphism onto 
$H^*(\sptn;\zmt)$, with kernel 
$b_{\varDelta}^*((\nu_n,\nu_{n+1})\om t)$. Comparing \eqref{dellam} 
with \eqref{yaspoly} and using the relation $a\om x=a^{d+1}$ from 
\eqref{stwhthetad}, gives $\delta_{r+1}=a\nu_r\om t$ in 
$H^{(r+1)d+1}(\MPd;\zmt)$ for any $r\geq 1$; 
so $b_{\varDelta}^*$ has kernel $(a\nu_{n+l}\om t:l\geq 0)$, and the 
second isomorphism holds.
\end{proof}
For example, the classes $\nu_2\om t=(a^{2d}+m^2+y)\om t$ and
$\nu_3\om t=(a^{3d}+m^3)\om t$ are nonzero in 
$(\nu_2,\nu_3)\om t/(a\nu_{2+l}\om t:l\geq 0)$. In general, 
$(a\nu_{n+l}\om t:l\geq 0)$ is simply a copy of $\zmt<
(\nu_n,\nu_{n+1})\om t$ in each dimension $(n+l+1)d+1$.

%
%
%
%
%
%
%
%
%

\section{Integral truncation}\label{intr}

Our final section completes the calculations, by focusing on the
integral cohomology rings of $\varGamma_n$, $B_n$, and $\sptn$.  An
important intermediate step is the study of $H^*(\Th(\theta_n))$,
whose global structure is best described in the spirit of
\cite{cad:cbw}, using local coefficients. That approach will,
however, be pursued elsewhere, and attention will be restricted here
to the applications.

As in the mod $2$ case, the first space to consider is the
configuration space $\varGamma_n\simeq \mathcal{C}_2(\KPn)$, 
whose integral
cohomology follows from the Leray-Serre spectral sequence for the
fibration $\bR P^d\to\varGamma_n\to Gr_{n+1,2}$, by restricting
the cohomology of the base in the proof of Theorem \ref{cohbpd}.
\begin{thm}\label{intcohbpdn}
For any $n\geq 1$, there is an isomorphism 
\[
H^*((\varGamma_n)_{\ssp})\;\cong\;
\bZ\om[c,m,y]/(2c,\om cm,\nu_n,\nu_{n+1})\;\cong\;
H^*(\varGamma_{\ssp})/(\nu_n,\nu_{n+1})
\]
of graded rings, where $\nu_k=\sigma_k(c^{d/2}\2+\2 m,\om y)$ 
in $H^{kd}(\varGamma)$ for every $k\geq 1$. 
\end{thm}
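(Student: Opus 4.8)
The plan is to imitate the proof of Theorem \ref{cohbpd} essentially verbatim, with the base $BO_\bK(2)$ replaced by $Gr_{n+1,2}$. So I would consider the Leray--Serre spectral sequence
\[
E_2^{p,q}\;\letbe\;H^p((Gr_{n+1,2})_\ssp;H^q(\bR P^d_\ssp))\;\Longrightarrow\;H^{p+q}((\varGamma_n)_\ssp)
\]
of the fibration $\bR P^d\to\varGamma_n\to Gr_{n+1,2}$, which is the projectivisation of $\chi_n$ (Definition \ref{defgamman}), together with the analogous spectral sequence for $\bR P^d\to\varGamma\to BO_\bK(2)$ and the map between them induced by the inclusion $\varGamma_n\subset\varGamma$ over $Gr_{n+1,2}\subset BO_\bK(2)$. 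The Grassmannian is simply connected, so the coefficients are untwisted; by the truncated presentation $H^*((Gr_{n+1,2})_\ssp)\cong\bZ[l_1,l_2]/(\sigma_n,\sigma_{n+1})$ recalled in \S\ref{motwtr} the base cohomology is free abelian and concentrated in degrees divisible by $d$, and $H^*(\bR P^d_\ssp;\bZ)$ is likewise concentrated in even degrees, so $E_2^{p,q}$ vanishes unless $p,q$ are both even. Since every differential raises total degree by $1$, all differentials vanish and $E_\infty^{*,*}\cong E_2^{*,*}\cong H^*((Gr_{n+1,2})_\ssp)\otimes H^*(\bR P^d_\ssp)$, exactly as for $\varGamma$.

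Next I would read off the ring. The edge homomorphism injects $H^*((Gr_{n+1,2})_\ssp)$ as a torsion-free direct summand; writing $x,y$ for the images of $l_1,l_2$, one has $\nu_k=\sigma_k(c^{d/2}+m,y)=\sigma_k(x,y)$ in $H^{kd}((\varGamma_n)_\ssp)$ (using $m=c^{d/2}+x$ and $2c^{d/2}=0$ from \S\ref{chcl}), so $\nu_n$ and $\nu_{n+1}$ vanish because $\sigma_n(l_1,l_2)$ and $\sigma_{n+1}(l_1,l_2)$ do. The class $c=c_1(\lambda_\bC)$ restricts to a generator of $H^2(\bR P^d_\ssp;\bZ)\cong\zmt$ on the fibre (since $\lambda$ restricts there to $\eta_d$), so the powers $c^k$ with $0\le k\le d/2$ represent the fibre generators of $E_\infty^{*,*}$ and are nonzero. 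The only point needing care — the $\bZ$-coefficient extension problem, which has no counterpart in the mod $2$ argument because $H^*(\bR P^d;\bZ)$ carries $2$-torsion — is settled as for $\varGamma$: the class $c$ is the restriction of its namesake in $H^2(\varGamma_\ssp)$, where $2c=0$ by Theorem \ref{cohbpd} (equivalently by the $\RPI$-summand of Remark \ref{acnonnilp}), so $2c=0$ in $H^2((\varGamma_n)_\ssp)$, every $c^k$ with $1\le k\le d/2$ has order exactly two, the $c$-free part of $H^*((\varGamma_n)_\ssp)$ is precisely the edge copy of $H^*((Gr_{n+1,2})_\ssp)$, and its complement is the $2$-torsion subgroup spanned additively by the $x^iy^jc^k$ with $1\le k\le d/2$. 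The Leray--Hirsch relation $c^{d/2+1}=cx$, forced as in Theorem \ref{cohbpd} by $cx\neq0$, then gives $cm=c^{d/2+1}+cx=2cx=0$.

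Putting this together, the assignment $c\mapsto c$, $m\mapsto m$, $y\mapsto y$ defines a surjective ring homomorphism $\bZ[c,m,y]/(2c,cm,\nu_n,\nu_{n+1})\to H^*((\varGamma_n)_\ssp)$, and I would check it is an isomorphism by comparing, in each degree, the free rank and the $\zmt$-dimension of the $2$-torsion of the source against those of $E_\infty^{*,*}$; equivalently, by observing that the restriction map $H^*(\varGamma_\ssp)\to H^*((\varGamma_n)_\ssp)$ is onto (it is onto on $E_\infty$, since $H^*(BO_\bK(2))\to H^*(Gr_{n+1,2})$ is) with kernel the ideal $(\nu_n,\nu_{n+1})$ — on associated graded this kernel is $(\sigma_n,\sigma_{n+1})\otimes H^*(\bR P^d_\ssp)$, and the spectral-sequence filtration upgrades the statement to the ideal itself. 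The second isomorphism $H^*((\varGamma_n)_\ssp)\cong H^*(\varGamma_\ssp)/(\nu_n,\nu_{n+1})$ is then immediate from Theorem \ref{cohbpd}, since $H^*(\varGamma_\ssp)\cong\bZ[c,m,y]/(2c,cm)$. The main obstacle throughout is exactly this book-keeping with $\bZ$-coefficient torsion: the spectral sequence collapses cheaply, but ensuring that the $2$-torsion is genuinely $\zmt$ rather than $\bZ/4$, and that the torsion-free part splits off, cannot be read off formally from $E_\infty$ and must be imported from the relation $2c=0$ and the $n=\infty$ computation.
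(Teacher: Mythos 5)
Your proof follows exactly the route the paper intends: the paper's own "proof" of Theorem~\ref{intcohbpdn} is the single sentence preceding it, instructing the reader to rerun the Leray--Serre argument of Theorem~\ref{cohbpd} with the base truncated from $BO_\bK(2)$ to $Gr_{n+1,2}$, and you carry this out faithfully, including the correct resolution of the $\bZ$-coefficient extension (importing $2c=0$ via restriction from $\varGamma$) and the identification of the kernel of the restriction map as the ideal $(\nu_n,\nu_{n+1})$. The details you supply beyond the paper's one-liner are accurate and are precisely what the phrase "by restricting the cohomology of the base" is asking the reader to check.
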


Our next task is to study $H^*(\Th(\theta_n))$. The crucial geometric
input is the commutative diagram \eqref{thomlad} (whose upper row no
longer arises from a $3$-sphere bundle for any finite $n$). Applying
$H^*(-)$ gives the commutative ladder
\begin{equation}\label{intcohladn}
\begin{CD}
\dots\!\!@<{\delta}<<\!\! H^*(RK^n_{\ssp})\?
@<{i_n^*}<<\!\?\,H^*((B_n)_{\ssp})\?\!@<{b_n^*}<<\?\,H^*(\Th(\theta_n))\!\!
@<{\delta}<<\!\dots\\[-.1em]
@.\!@A{\pi\smash{_2^*}}AA\;@AA{\pi^*}A@AA{id}A\\[-.1em]
\dots\!\!@<{\delta}<<\!\! H^*(\bK P^n_{\ssp})\?@<{i_\varDelta^*}<<
\!\?\,H^*((\sptn)_{\ssp})\?\!@<{b_\varDelta^*}<<\?\,H^*(\Th(\theta_n))
@<{\delta}<<\!\dots
\end{CD}
\end{equation}
for any $n\geq 1$. Elements $c,\,x,\,m$, and $y$ are defined in
$H^*(B_n)$ via Remarks \ref{importacxy}, such that $H^*(RK^n_{\ssp})$ is the
truncation $\bZ[c,z]/(2c,z^{n+1})$ and $H^{od}(\RKn)=0$.  To describe
$H^*(B_n)$ more precisely, a brief diversion is required.

For any $k\geq 0$, consider the \emph{power sum} polynomial
\begin{equation}\label{newtpoly}
r_k(e_1,\om e_2)\;=\;z_1^k + z_2^k\,,
\end{equation}
where $e_1=z_1+z_2$ and $e_2=z_1z_2$. The $r_k$ lie in 
$\bZ\om[e_1,e_2]$, and satisfy
\begin{equation}\label{psrecur}
r_{k+2}=e_1\om r_{k+1}-e_2\om r_k\sts{with}
r_0=2,\;r_1=e_1.
\end{equation}
They fit into the framework of \eqref{ff}, and are given by
\begin{equation}\label{ffdefr}
r_k\;=\;\sum_{0\leq i\leq k/2}
(-1)^i\left(2\binom{k-i}{i}-\binom{k-i-1}{i}\right)e_1^{k-2i}e_2^i\,.
\end{equation}  

\begin{rems}\label{psprops}
Iterating \eqref{psrecur} shows that $r_{k+t}$ lies in the ideal 
$(r_k,r_{k+1})$ for any $t\geq 0$, and that 
$r_k\equiv(-1)^{k/2}2e_2^{k/2}$ mod $(e_1)$ for every even $k$. The 
expression
\begin{equation}\label{psjk}
e_2^j\om r_k\;=\;\sum_{0\leq i\leq j}(-1)^i\binom{j}{i}e_1^{j-i}r_{k+j+i}
\end{equation}
holds for any $j$, $k\geq 0$, by rewriting $e_2$ as 
$z_1(e_1-z_1)=z_2(e_1-z_2)$.
\end{rems}
The polynomials $r_k$ help to extend unpublished work of 
Roush \cite{rou:stc} on $H^*(B)$.
\begin{thm}\label{intcohbn}
The integral cohomology ring $H^*((B_n)_{\ssp})$ is 
isomorphic to
\[
Z^*_n\;\om\text{\rm $\letbe$}\;\;
\bZ\om[\om c,m,y]/(2c,\om cm,\om r_{n+1},r_{n+2},\om y^{n+1})
\;\cong\;H^*(B_{\ssp})\?/(r_{n+1},r_{n+2},\om y^{n+1})
\]
where $r_k=r_k(m,y)$ has dimension $kd$ for every $k\geq 0$.
\end{thm}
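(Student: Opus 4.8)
The plan is to compute $H^*((B_n)_\ssp)$ by applying the Leray-Serre spectral sequence to the second fibration in \eqref{clp4borbun}, namely the Borel bundle $\KPn\times\KPn\stackrel{k}{\to}B_n\to\RPI$, exactly as Theorem \ref{intcohbn} implicitly invites by mentioning Roush's work on $H^*(B)$ and the comparison with $H^*(B_\ssp)$. First I would note that $H^*(\KPn\times\KPn)\cong\bZ[z_1,z_2]/(z_1^{n+1},z_2^{n+1})$ is free abelian and concentrated in even dimensions, so the spectral sequence degenerates over $\bZ[1/2]$ for dimension reasons on the $\RPI$-base, but the $2$-torsion in $H^*(\RPI)=\bZ[c]/(2c)$ forces differentials that are controlled by the $\pi_1(\RPI)=C_2$-action via $\iota$, which swaps $z_1\leftrightarrow z_2$. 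The invariant subring is $\bZ[e_1,e_2]/(\text{relations})$ with $e_1=z_1+z_2=k^*(x)$ and $e_2=z_1z_2=k^*(y)$, while the anti-invariant part is generated over the invariants by $z_1-z_2$; the relevant $E_2$-page is then $H^*(\RPI;\mathcal{H}^*)$ with local coefficients, and the surviving classes are exactly the invariant polynomials together with torsion classes built from $c$ times invariants.

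The key computational step is identifying the relations. In $H^*(\KPn\times\KPn)$ one has $z_1^{n+1}=z_2^{n+1}=0$; symmetrising these gives $r_{n+1}(e_1,e_2)=z_1^{n+1}+z_2^{n+1}=0$ and, multiplying by $z_1+z_2$ and reorganising, $r_{n+2}=0$ as well (using $e_1 r_{n+1}=r_{n+2}+e_2 r_n$ from \eqref{psrecur}, together with the fact that $z_1^{n+1}=0$ kills the cross terms). Also $z_1^{n+1}z_2^{n+1}=0$ gives $e_2^{n+1}=y^{n+1}=0$. So the invariant subring of $H^*(\KPn\times\KPn)$, which is where $b_n^*$ and the even part of $H^*(B_n)$ live, is $\bZ[e_1,e_2]/(r_{n+1},r_{n+2},e_2^{n+1})$; translating $e_1\mapsto m$, $e_2\mapsto y$ via Remarks \ref{importacxy} and adjoining the torsion generator $c$ with its relations $2c=0$, $cm=0$ inherited from Theorem \ref{intcohbn} (or re-derived from the $\RPI$-summand argument of Remark \ref{acnonnilp} applied to $B_n$) yields the claimed ring $Z^*_n$. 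I would phrase this cleanly as: $H^*((B_n)_\ssp)$ is the truncation of $H^*(B_\ssp)=Z^*$ by the ideal generated by the restriction kernels, and these are precisely $(r_{n+1},r_{n+2},y^{n+1})$ since those are the symmetrisations of $(z_1^{n+1},z_2^{n+1})$ and their product, which by Remarks \ref{psprops} (iteration of \eqref{psrecur}) already contain $r_{n+1+t}$ for all $t\geq 0$.

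Concretely, the argument would run: (i) establish that the restriction map $H^*(B_\ssp)\to H^*((B_n)_\ssp)$ is surjective, using the skeletal filtration and the fact that $B_n\subset B$ is the inclusion of a subcomplex with $B/B_n$ highly connected through a range that grows with $n$; (ii) identify the kernel by comparing with the torsion-free quotient, where $\pi^*$ embeds $H^{ev}((B_n)_\ssp)$ into $H^{ev}(\KPn\times\KPn)^{C_2}\otimes\bZ[1/2]$, making the relations $r_{n+1}=r_{n+2}=y^{n+1}=0$ forced; (iii) check that $c$-torsion contributes nothing new beyond $2c=cm=0$, because the $\RPI$-summand of $B_n$ (via $B\det$ restricted, as in Remark \ref{acnonnilp}) is unchanged by truncation and $c\cdot(\text{positive-degree even class})$ either already vanishes or is accounted for; (iv) verify multiplicatively that no further relations appear by a rank count in each dimension against the explicit monomial basis $m^iy^j$, $c m^iy^j$ with the truncation bounds. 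The main obstacle I anticipate is step (iii): ensuring that the passage from the spectral sequence $E_\infty$-page to the actual ring $H^*((B_n)_\ssp)$ introduces no hidden extensions mixing the free part with the $c$-torsion, and that the relation $cm=0$ (rather than, say, $c m \neq 0$ but $2cm=0$) genuinely persists after truncation. This is handled exactly as in the proof of Theorem \ref{cohbpd}: the even-dimensional, torsion-free base cohomology forces $E_\infty\cong E_2$ as modules, and the single multiplicative relation is pinned down by the non-nilpotence of $c$ from the split $\RPI$-summand together with the observation that $cm$ lies in a dimension where the only available class is $2$-torsion, so $cm=0$ follows from $cx\neq 0$ being impossible in the truncated range — precisely mirroring Remark \ref{csiszero}.
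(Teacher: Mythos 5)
Your spectral-sequence plan is essentially the route the paper takes: apply the Leray--Serre spectral sequence to the Borel bundle $\KPn\times\KPn\to B_n\to\RPI$ with coefficients twisted by $\iota$, compute $E^{0,*}_2$ as the invariant ring $\bZ[e_1,e_2]/\Ker\mu$ where $\mu\colon\bZ[e_1,e_2]\to\bZ[z_1,z_2]/(z_1^{n+1},z_2^{n+1})$, identify $\Ker\mu=(e_2^{n+1},r_{n+1},r_{n+2})$, and pin down the cross relation $c\otimes e_1=0$ on the $E_2$-page. The paper does the latter by noting that $E^{2,d}_2=\bZ\langle e_1\rangle/\Ima(1+\iota^*)=0$, then invokes the Cartan--Eilenberg cochain model to make the isomorphism multiplicative and observes all differentials vanish for dimensional reasons; you gesture at the analogous argument for $\BPd$, which is the right instinct. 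The relations $(r_{n+1},r_{n+2},y^{n+1})$ are identified correctly.

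Two steps in your ``concrete route'' (i)--(iv) would not survive as written. First, surjectivity of restriction $H^*(B_\ssp)\to H^*((B_n)_\ssp)$ cannot be deduced from $B/B_n$ being highly connected: connectivity only controls a bounded range of dimensions, whereas both rings are nonzero in all sufficiently high even dimensions (the $c$-torsion persists, since $\RPI$ is a retract of both spaces via $B\det$). The paper gets surjectivity for free because it computes $H^*((B_n)_\ssp)$ directly from its own spectral sequence and then observes that the generators $c,m,y$ of $Z^*$ restrict to their namesakes. Second, your justification of $cm=0$ (``$cx\neq 0$ being impossible in the truncated range, mirroring Remark~\ref{csiszero}'') misreads the sources: Remark~\ref{csiszero} concerns $c(m^2-4y)=0$, and the proof of Theorem~\ref{cohbpd} uses $cx\neq 0$ (not its impossibility) to deduce $c^{d/2+1}=cx$. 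The clean mechanism here is simply $E^{2,d}_2=0$, which kills $c\otimes e_1$ at the $E_2$-page before any truncation enters. These are presentation glitches rather than a wrong strategy, but they are the places where a referee would push back.
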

\begin{proof}
Consider the Leray-Serre spectral sequence 
\[
E^{p,q}_2\;\letbe\; H^p(\RPIssp;\mathcal{H}^q((\KPn\2\times\KPn)_{\ssp})
\;\Longrightarrow\;H^{p+q}((B_n)_{\ssp})
\]
of the Borel bundle for $B_n$, where $\mathcal{H}^*(\;)$ denotes
cohomology twisted by the involution $\iota$. Since $\iota^*$ acts on
$\bZ\om[z_1,z_2]/(z_1^{n+1},z_2^{n+1})$ by interchanging $z_1$ and
$z_2$, the ring of invariants is isomorphic to
$\bZ\om[e_1,e_2]/\Ker\mu$, where $\mu$ is the projection of
$\bZ\om[e_1,e_2]$ into $\bZ\om[z_1,z_2]/(z_1^{n+1},z_2^{n+1})$. It
follows from \eqref{psrecur} that $\Ker\mu$ is the 
ideal generated by $e_2^{n+1}$, $r_{n+1}$, and $r_{n+2}$, and hence 
that
\begin{equation*}\label{invts}
E^{0,*}_2\;=\;\,\bZ\om[e_1,e_2]/(e_2^{n+1},r_{n+1},r_{n+2}).
\end{equation*}

Moreover, there is an isomorphism $E^{*,0}_2\cong H^*(\RPIssp)$. So 
the standard co\-chain complex (as constructed in 
\cite[Chapter XII \S7]{caei:ha}, for example) leads to a 
multiplicative isomorphism
\begin{equation}\label{e2multtrunc}
E^{*,*}_2\;\cong\;
(\bZ[c]/(2c))\otimes_{\bZ}E^{0,*}_2\big/(c\otimes e_1),
\end{equation}
where $c\otimes e_1=0$ holds because $E^{2,d}_2=0$. This, in
turn, is a consequence of the equation $e_1=z_1+\iota^*(z_1)$ in
\smash{$\bZ\br{e_1}/\Ima(1+\iota^*)\cong E^{2,d}_2$.}

Every differential is zero for dimensional reasons, so
\eqref{e2multtrunc} actually describes the $E_\infty$ term.
Furthermore, $m$ and $y$ in $H^*(B_n)$ represent $e_1$ and $e_2$ in
\smash{$E^{0,*}_\infty$} respectively, and $c$ in $H^2(B_n)$ represents 
$c$ in $E^{2,0}_\infty$. Thus $c^if(m,y)$ in $H^{2i+jd}(B_n)$ represents
$c^i\otimes f(e_1,e_2)$ in $E^{2i,jd}_\infty$ for any homogeneous
polynomial $f(m,y)$; all extension problems are therefore trivial
and \eqref{e2multtrunc} is additively isomorphic to $H^*(B_n)$. Since
$cm$ represents $c\otimes e_1$ and both are zero, the isomorphism
is also multiplicative, and the result follows.
\end{proof}
\begin{cor}\label{bntrunc}
For any $n\geq 1$, the monomials $m^iy^j$ form a basis for a maximal 
torsion-free summand of $H^{ev}(B_n)$, where $1\leq i+j\leq n$; also, 
$H^{od}(B_n)=0$. 
\end{cor}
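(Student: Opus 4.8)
\textit{Proof sketch.}
The plan is to read everything off Theorem~\ref{intcohbn} together with the spectral sequence computation in its proof. Since the generators $c$, $m$, $y$ of $Z^*_n$ have cohomological dimensions $2$, $d$, and $2d$, all even, the ring $Z^*_n$ is concentrated in even total degree; hence $H^{od}(B_n)=0$ at once (and likewise $H^{od}((B_n)_\ssp)=0$).

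For the torsion-free summand I would use the additive splitting supplied by the $E_2$-term. By \eqref{e2multtrunc} we have $H^*((B_n)_\ssp)\cong(\bZ[c]/(2c))\otimes_\bZ E^{0,*}_2\big/(c\otimes e_1)$, and since $c\otimes e_1$ lies in the ideal $(c)$, every element of which is killed by $2$, the image of $1\otimes E^{0,*}_2$ is a complement to a $2$-torsion subgroup. As $E^{0,*}_2=\bZ[e_1,e_2]/\Ker\mu\cong\Ima\mu$ is a subgroup of the free abelian group $\bZ[z_1,z_2]/(z_1^{n+1},z_2^{n+1})$, it is itself free abelian; so $1\otimes E^{0,*}_2$ is a maximal torsion-free summand of $H^{ev}(B_n)$. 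Under the identifications of Theorem~\ref{intcohbn}, $m$ and $y$ represent $e_1$ and $e_2$, so it remains only to prove that the classes $e_1^ie_2^j$ with $1\le i+j\le n$ form a $\bZ$-basis of the positive-degree part of $\Ima\mu$.

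Fix a total degree $wd$, i.e.\ weight $w$ in the $z$-grading, and let $j$ run over $\max(0,w-n)\le j\le\lfloor w/2\rfloor$ with $i=w-2j$; these are exactly the pairs with $i+2j=w$ and $1\le i+j\le n$. Injectivity is immediate: each $\mu(e_1^ie_2^j)=\sum_k\binom{i}{k}z_1^{k+j}z_2^{i+j-k}$ involves only monomials whose two exponents are at most $i+j\le n$, so no reduction modulo $(z_1^{n+1},z_2^{n+1})$ occurs, and the $e_1^ie_2^j$ are already $\bZ$-independent in $\bZ[z_1,z_2]$ because $\bZ[e_1,e_2]$ is a polynomial ring. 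For unimodularity I would order the pairs by $j$ decreasing and note that $\mu(e_1^{w-2j}e_2^j)$ has leading symmetric term $z_1^{w-j}z_2^{j}+z_1^{j}z_2^{w-j}$ with coefficient $1$ (the summand $k=i$), while as $j$ decreases $w-j$ runs over exactly the indices $\lceil w/2\rceil\le a\le\min(w,n)$ of the symmetric monomial basis of the invariants in weight $w$; hence the change-of-basis matrix is triangular with $1$'s on the diagonal, so the $\mu(e_1^ie_2^j)$ form a basis. Restoring $m$ for $e_1$ and $y$ for $e_2$ and discarding the degree-$0$ class gives the asserted basis $\{m^iy^j:1\le i+j\le n\}$ of a maximal torsion-free summand of $H^{ev}(B_n)$.

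The main obstacle is this last point: confirming that it is precisely the monomials with $i+j\le n$ that survive in $\Ima\mu$ and that they form an \emph{integral}, not merely rational, basis. The triangularity observation is what makes this clean; alternatively one can argue by downward induction on $i+j$, using the relations $r_{n+1}$, $r_{n+2}$, $y^{n+1}\in\Ker\mu$ to reduce every $e_1^ie_2^j$ to a $\bZ$-combination of those with $i+j\le n$, each step strictly lowering $i+j$. $\hfill\Box$
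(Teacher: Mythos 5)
Your argument is correct and follows the same underlying framework as the paper: both reduce to the $E^{0,*}_2$ line of the Leray--Serre spectral sequence, identify it with $\Ima\mu$ inside the symmetric invariants of $\bZ[z_1,z_2]/(z_1^{n+1},z_2^{n+1})$, and then exhibit $\{e_1^ie_2^j : 1\le i+j\le n\}$ as a basis. The one place you genuinely diverge is in \emph{how} you establish that these monomials form an integral basis: the paper derives spanning by explicit reduction of $e_1^{n+1-s}e_2^s$ using \eqref{psjk} and the relations $r_{n+1},r_{n+2},y^{n+1}\in\Ker\mu$, and then merely asserts linear independence of the projections; you instead observe that, ordered by decreasing $j$, the classes $\mu(e_1^{w-2j}e_2^j)$ have leading symmetric term $z_1^{w-j}z_2^j+z_1^jz_2^{w-j}$ with coefficient $1$, so the change-of-basis matrix to the symmetric-monomial basis of the invariants is unimodular triangular. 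This is a modest but real improvement, as it delivers independence, spanning, \emph{and} surjectivity of $\mu$ onto the full ring of invariants in one stroke, whereas the paper's proof relegates independence to a one-line assertion. Your fallback (downward induction on $i+j$ via the relations in $\Ker\mu$) is exactly the paper's route, so the two arguments are interchangeable.
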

\begin{proof}
It suffices to work with $e_1$ and $e_2$ in \smash{$E^{0,*}_2$}, which is 
zero when $*$ is odd.

When $*\leq nd$ is even, the monomials $e_1^ie_2^j$ already form a
basis. In dimension $(n+1)d$ and higher, the relations take effect;
for example, $e_1^{n+1}$ is divisible by $e_2$ modulo
$\Ker\mu$. Similarly, by repeated appeal to \eqref{psjk},
$e_1^{n+1-s}e_2^s$ is divisible by $e_2^{s+1}$ modulo $\Ker\mu$ in
each dimension $(n+1+s)d$ for which $0\leq s\leq n$, and
$e_2^{n+1}=0$. These relations, together with their multiples by
powers of $e_1$, show that \smash{$E^{0,td}_2$} is spanned by those
$e_1^ie_2^j$ for which $i+2j=t$ and $i+j\leq n$, where $n<t\leq 2n$;
they form a basis because their projections are linearly independent
under $\mu$. All monomials of higher dimension lie in $\Ker\mu$.
\end{proof}
\begin{rem}\label{topcell}
The highest dimensional non-torsion elements are in $H^{2nd}(B_n)$,
where the monomial $y^n$ generates a single summand $\bZ$.    
\end{rem}

By analogy with \eqref{intisos}, the upper row of \eqref{intcohladn} 
induces isomorphisms
\begin{equation}\label{intisostrunc}
H^{ev}(\Th(\theta_n^d))\;\cong\;\Ker i_n^*\spandsp
H^{od}(\Th(\theta_n^d))\;\cong\;\Cok i_n^*,
\end{equation}
where the first is of algebras over the ring $Z_n^*$ of 
Theorem \ref{intcohbn}.  These isomorph\-isms are best discussed in
the context of the commutative ladder 
\begin{equation}\label{infcofibsn}
\begin{CD}
\dots\!\!@<{\delta}<<\!\! H^*(RK^n_{\ssp})\!\!
@<{i_n^*}<<\!\!H^*((B_n)_{\ssp})\!@<{b_n^*}<< H^*(\Th(\theta_n))
\!\!@<{\delta}<<\!\dots\\[-.1em]
@.\!\!\!@A{}AA\;@A{}AA\;@A{}AA\\[-.1em]
\dots\!\!@<{\delta}<< H^*(RK^{\infty}_{\ssp})\!
@<{\;i^*}<<H^*(B_{\ssp})@<{\;b^*}<<\!\! H^*(\MPd)\!\!
@<{\delta}<<\!\dots
\end{CD}
\end{equation}
which arises by restriction to $B_n$. 

\begin{prop}\label{kerin}
For $n\geq 2$, the kernel of $i_n^*$ is the principal ideal 
$(m^2-4y)$, and for $n=1$, it is $(2y)$; in dimensions 
$>nd$ the monomials $m^iy^j$ and $2y^k$ form an additive basis, 
where $i\geq 1$, $i+j\leq n<i+2j$ and $n/2<k\leq n$.
\end{prop}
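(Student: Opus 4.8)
The plan is to transplant the computation of Lemma~\ref{kerp3} to the truncated setting using the restriction ladder~\eqref{infcofibsn}. Write $J\subseteq Z^*$ for the kernel of the restriction $H^*(B_\ssp)\to H^*((B_n)_\ssp)$; by Theorem~\ref{intcohbn} this is the ideal $(r_{n+1},r_{n+2},y^{n+1})$, and the restriction $H^*(\RKIssp)\to H^*(RK^n_\ssp)$ is surjective with kernel $(z^{n+1})$. Since both vertical maps in the left-hand square of~\eqref{infcofibsn} are surjective, a routine diagram chase identifies $\Ker i_n^*$ with the image in $H^*((B_n)_\ssp)=Z^*/J$ of the preimage $(i^*)^{-1}\bigl((z^{n+1})\bigr)$. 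So the first task is to prove
\begin{equation}\label{kerinaux}
(i^*)^{-1}\bigl((z^{n+1})\bigr)\;=\;(m^2-4y)+J\qquad\text{in }Z^*,
\end{equation}
after which $\Ker i_n^*$ is the ideal generated by $m^2-4y$ in $H^*((B_n)_\ssp)$. For $n=1$ this collapses: $r_2=m^2-2y$ lies in $J$, so $m^2-4y\equiv-2y$ modulo $J$ and the kernel reduces to $(2y)$, which is the degenerate case in the statement.

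For the inclusion $\supseteq$ in~\eqref{kerinaux} I would simply record $i^*$ on generators. From $i^*(m)=2z$, $i^*(y)=(c^{d/2}+z)z$, $i^*(c)=c$ (Lemma~\ref{p3action} via Remarks~\ref{importacxy}) one gets $i^*(m^2-4y)=(2z)^2-4(c^{d/2}+z)z=-4c^{d/2}z=0$ since $2c=0$, and $i^*(y^{n+1})=(c^{d/2}+z)^{n+1}z^{n+1}\in(z^{n+1})$. For the power sums I would work in the extension where the discriminant $-4c^{d/2}z$ acquires a square root $2w$ with $w^2=-c^{d/2}z$; then $i^*(r_k)=(z+w)^k+(z-w)^k=2\sum_j\binom{k}{2j}(-1)^jc^{jd/2}z^{k-j}$, and every summand with $j\geq1$ carries a factor $2c=0$, so $i^*(r_k)=2z^k$ for all $k$. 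In particular $i^*(r_{n+1})=2z^{n+1}$ and $i^*(r_{n+2})=2z^{n+2}$ lie in $(z^{n+1})$, giving $\supseteq$.

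The reverse inclusion in~\eqref{kerinaux} I would establish exactly as in Lemmas~\ref{kerp3} and~\ref{cokp3}. Working modulo $(m^2-4y)$ and invoking Remark~\ref{csiszero}, an element $\alpha$ with $i^*(\alpha)\in(z^{n+1})$ may be put in the reduced form $\alpha=f_0(y)+m\,f_1(y)+c\,g(c,y)$; applying $i^*$, expanding $i^*(y^b)=\sum_l\binom{b}{l}c^{ld/2}z^{2b-l}$, and demanding that the coefficient of every monomial $c^iz^j$ with $j\leq n$ vanish, the non-nilpotence of $c$ (Remark~\ref{acnonnilp}) and the parity pattern of the binomial coefficients force every coefficient of $g$ and every coefficient of $m\,f_1(y)$ in degree $\leq n$ to vanish, and force every coefficient of $f_0(y)$ in degree $\leq n$ to be even. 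The identity $i^*(r_{n+1})=2z^{n+1}$ is precisely what allows the leftover boundary term in dimension $(n+1)d$ to be absorbed into $J$. Hence $\alpha\in(m^2-4y)+J$, which proves~\eqref{kerinaux}.

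Finally, the additive basis in dimensions $>nd$ drops out of the same computation together with the monomial description of $H^*((B_n)_\ssp)$ from Theorem~\ref{intcohbn} and Corollary~\ref{bntrunc}: $H^*((B_n)_\ssp)$ has $\bZ$-basis the torsion-free monomials $m^iy^j$ with $i+j\leq n$ and the $2$-torsion monomials $c^iy^j$ with $i\geq1$, $j\leq n$, and $i_n^*$ sends $m^iy^j$ (for $i\geq1$) to $2^iz^{i+2j}$, sends $2y^k$ to $2z^{2k}$, and sends $y^j$ or $c^iy^j$ with $j\leq n$ to an element with non-zero lowest term $c^{\ast}z^j$. So a basis monomial lies in $\Ker i_n^*$ exactly when it is $m^iy^j$ with $i\geq1$ and $i+2j>n$, or $2y^k$ with $2k>n$, i.e.\ $n/2<k\leq n$, while no $c$-torsion monomial and no $y^j$ with $j\leq n$ is killed. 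In dimensions $\leq nd$ the vertical restrictions are isomorphisms, so there $\Ker i_n^*=\Ker i^*=(m^2-4y)$; comparing dimension by dimension with the ideal $(m^2-4y)$ in $H^*((B_n)_\ssp)$ then confirms that the listed monomials form an additive basis of the kernel in every dimension. I expect the main obstacle to be the bookkeeping in the coefficient comparison of the third paragraph — in particular, tracking how the relations $r_{n+1},r_{n+2},y^{n+1}$ interfere with divisibility by $m^2-4y$ just above dimension $nd$.
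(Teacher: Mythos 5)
Your proposal is correct and follows essentially the same strategy as the paper: use the ladder \eqref{infcofibsn} with $\Ker(H^*(B_\ssp)\to H^*((B_n)_\ssp))=(r_{n+1},r_{n+2},y^{n+1})$, observe that in dimensions $\leq nd$ nothing changes so Lemma~\ref{kerp3} applies directly, establish the identity $i^*(r_k)=2z^k$ so that any lift in the kernel differs from an element of $\Ker i^*=(m^2-4y)$ by a multiple of $r_{n+s}$, and then read off the basis in dimensions $>nd$ from Corollary~\ref{bntrunc} together with $i_n^*(m^iy^j)=2^iz^{i+2j}$ for $i\geq 1$ and $i_n^*(y^k)\equiv c^{kd/2}z^k$ mod higher $z$-terms. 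The only genuine presentational difference is that you package the argument as the ideal-theoretic statement $(i^*)^{-1}\bigl((z^{n+1})\bigr)=(m^2-4y)+J$ and derive $i^*(r_k)=2z^k$ cleanly by adjoining a square root of the discriminant, whereas the paper argues element-by-element, lifting $\alpha$ to a degree-controlled $\alpha'$ and subtracting $\lambda r_{n+s}$; these are minor cosmetic variants of the same computation, and your version of the reverse inclusion is at the same level of detail as the paper's brief ``the proof of Lemma~\ref{kerp3} may then be modified.'' Both approaches handle $n=1$ identically via $r_2=m^2-2y$.
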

\begin{proof}
Truncation has no effect in dimensions $\leq nd$, so Lemma \ref{kerp3} 
continues to hold; furthermore, \eqref{infcofibsn} confirms that 
$i_n^*(m^2-4y)=0$ for any $n\geq 1$.

Now let $\alpha$ be such that $i_n^*(\alpha)=0$ in $H^{(n+s)d}(\RKn)$,
for some $s\geq 1$. Restriction is epic, so $\alpha$ lifts to an
element $\alpha'$ in $H^{(n+s)d}(B)$ whose expansion \eqref{fgformat}
has $f_i=g_j=0$ for $i,j>n$. Since $i^*(\alpha')\equiv 0$ mod
$(z^{n+1})$ in $H^{(n+s)d}(\RKI)$, the proof of Lemma \ref{kerp3} may
then be modified to show that $i^*(\alpha')=2\lambda z^{n+s}$ for some
integer $\lambda$. Moreover, $i^*(\lambda\om r_{n+s})=2\lambda
z^{n+s}$ by \eqref{newtpoly}, so $\alpha'-\lambda\om r_{n+s}$ lies in
$\Ker i^*$, and therefore in $(m^2-4y)$ by Lemma \ref{kerp3}.
Restriction then confirms that $\alpha$ lies in $(m^2-4y)$, as
required. This argument also works for $n=1$ because $r_2=m^2-2y$,
so the relation $r_2=0$ gives $m^2=2y$ in $H^{2d}(B_1)$.

In terms of Corollary \ref{bntrunc}, the monomials that lie in
dimensions between $(n+1)d$ and $2nd$ clearly satisfy
$i_n^*(m^iy^j)=0$ whenever $i\geq 1$; on the other hand,
$i_n^*(y^k)\equiv c^{kd/2}z^k$ mod $(cz^{k+1})$ is of order 
$2$, but non-zero. It follows that $\Ker i_n^*$ has the stated basis 
in this range.
\end{proof}

\begin{rems}\label{rkinsp}
By Remarks \ref{psprops}, the polynomials $r_k(m,y)$ are
$\equiv(-1)^{k/2}2y^{k/2}$ or $0$ mod $(m)$ in $H^{kd}(B)$, for even or
odd $k$ respectively. So by Theorem \ref{intcohspt}, they lie in the
image of $\pi^*$, and may therefore be rewritten as $r_k(g,h)$ in
$H^{kd}(\spt)$ without ambiguity. In this context, \eqref{psjk}
confirms that every element $g^ah^jr_k/2^j$ belongs to the ideal
\smash{$R^{ev}_{k+j-1}\letbe(r_{\ell}(g,h):\ell>k+j-1)$} of
$H^{ev}(\spt)$.  Examples are {\bf(1)} $h^jr_k/2^j$ when $a=0$, $k>1$,
{\bf(2)} $g^{a+1}h^j/2^j$ when $k=1$, and {\bf(3)} $h^j/2^{j-1}$
when $a=k=0$.
\end{rems}
\begin{cor}\label{sptntf}
The cohomology groups $H^{ev}(\sptn)$ are torsion free; in dimensions
$>nd$ the monomials $g^qh^s\!/2^s$ and $h^p\!/2^{p-1}$ form an
additive basis, where $q\geq 1$, $q+s\leq n<q+2s$ and $n/2<p\leq n$.
\end{cor}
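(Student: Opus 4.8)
The plan is to adapt the proof of Proposition~\ref{evdimfree} to the lower row of the ladder~\eqref{intcohladn}, replacing Lemma~\ref{kerp3} by Proposition~\ref{kerin}. Since $d$ is even, $H^{N-1}(\KPn)=0$ whenever $N$ is even, so the cohomology exact sequence of the cofibre sequence $\KPn\stackrel{i_\varDelta}{\lra}\sptn\stackrel{b_\varDelta}{\lra}\Th(\theta_n)$ breaks, in each even dimension $N$, into a short exact sequence
\[
0\lla\Ima i_\varDelta^*\llla H^N(\sptn)\stackrel{b_\varDelta^*}{\llla}
H^N(\Th(\theta_n))\llla 0 .
\]
Here $\Ima i_\varDelta^*=\Ker\bigl(\delta\colon H^N(\KPn)\to H^{N+1}(\Th(\theta_n))\bigr)$, and by Remarks~\ref{remslad} in their truncated form $\delta(z^p)=\delta_p$, where the truncated analogue of Corollary~\ref{preuij} (together with the mod~$2$ statement preceding Theorem~\ref{cohsptnmod2}, applied via the order-$2$ property of $\delta_p$) shows that $\delta_p$ is non-zero of order~$2$ in $H^{pd+1}(\Th(\theta_n))$ for $1\leq p\leq n$. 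Hence $\Ima i_\varDelta^*\cong\bZ\br{2z^p}$ when $N=pd$ with $1\leq p\leq n$, and is $0$ otherwise; in every case it is torsion free.

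The first thing to check is that $H^{ev}(\Th(\theta_n))$ is torsion free. By~\eqref{intisostrunc} the map $b_n^*$ identifies it with $\Ker i_n^*\subseteq H^{ev}(B_n)$, which for $n\geq 2$ is the ideal $(m^2-4y)$ of $Z_n^*$ by Proposition~\ref{kerin}; this ideal satisfies $c(m^2-4y)=-4cy=0$, so it lies inside the torsion-free summand $\bZ\br{m^iy^j:1\leq i+j\leq n}$ of $H^{ev}(B_n)$ exhibited in Corollary~\ref{bntrunc}, and the additive basis $\{m^iy^j\}\cup\{2y^k\}$ of Proposition~\ref{kerin} in dimensions $>nd$ again consists of non-torsion elements (the case $n=1$ being the same with $(2y)$ in place of $(m^2-4y)$). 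Feeding the torsion-freeness of $H^N(\Th(\theta_n))$ and of $\Ima i_\varDelta^*$ into the displayed sequence shows $H^N(\sptn)$ is torsion free in every even dimension, which is the first assertion.

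For the additive basis in dimensions $>nd$, note that there $H^N(\KPn)=0$, so the displayed sequence collapses to an isomorphism $b_\varDelta^*\colon H^N(\Th(\theta_n))\stackrel{\cong}{\lra}H^N(\sptn)$; it therefore suffices to transport the basis $\{m^iy^j\}\cup\{2y^k\}$ of $\Ker i_n^*$ given by Proposition~\ref{kerin}. As $H^{ev}(\sptn)$ is now known to be torsion free and $\pi^*\colon H^*(\sptn)\to H^*(B_n)$ is a $\bZ[1/2]$-isomorphism, $\pi^*$ is monic on $H^{ev}(\sptn)$; and, exactly as in the proof of Theorem~\ref{intcohspt}, the restrictions of $g$ and $h$ to $\sptn$ satisfy $\pi^*(g)=m$ and $\pi^*(h)=2y$ in $H^*(B_n)$, whence $\pi^*(g^ih^j\!/2^j)=m^iy^j$ and $\pi^*(h^k\!/2^{k-1})=2y^k$. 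Since $\pi^*\scirc b_\varDelta^*=b_n^*$ and the identification $H^{ev}(\Th(\theta_n))\cong\Ker i_n^*$ is by $b_n^*$, the basis element of $H^N(\Th(\theta_n))$ matching $m^iy^j$ (respectively $2y^k$) is carried by $b_\varDelta^*$ to the unique class whose $\pi^*$-image is $m^iy^j$ (respectively $2y^k$), namely the restriction of $g^ih^j\!/2^j$ (respectively $h^k\!/2^{k-1}$) from Theorem~\ref{intcohspt}. Matching the ranges $i\geq 1$, $i+j\leq n<i+2j$ and $n/2<k\leq n$ of Proposition~\ref{kerin} with those in the statement — writing $q,s,p$ for $i,j,k$ — then gives precisely the claimed basis; all monomials of dimension $>nd$ that are not multiples of $d$ simply do not occur, consistently with $H^{ev}(B_n)$ being concentrated in dimensions divisible by $d$.

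I expect the main obstacle to be bookkeeping rather than substance: one must verify that the Thom-isomorphism identification $H^{ev}(\Th(\theta_n))\cong\Ker i_n^*$, the $\bZ[1/2]$-monomorphism $\pi^*$, and the restriction homomorphisms from the $n=\infty$ computations of \S\ref{inco} are all mutually compatible, so that the explicit basis of $\Ker i_n^*$ transports to the eponymous monomials; and one must pin down the truncated value of $\delta$ on $H^{ev}(\KPn)$, which is where the truncated analogue of Corollary~\ref{preuij} enters.
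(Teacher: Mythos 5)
Your proof is correct and follows the same route as the paper's (very terse) proof: it rests on Proposition~\ref{kerin}, the identification~\eqref{intisostrunc} of $H^{ev}(\Th(\theta_n))$ with $\Ker i_n^*$, and the fact that $b_\varDelta^*$ is an isomorphism in dimensions $>nd$, then transports the explicit basis via $\pi^*$ and~\eqref{defgh}. You merely spell out the short exact sequence and the torsion-freeness of $\Ima i_\varDelta^*$ that the paper leaves implicit.
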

\begin{proof}
Combine Proposition \ref{kerin} with \eqref{defgh},
\eqref{intisostrunc}, and the fact that $b^*_\varDelta$ is an
isomorphism for dimensions $>nd$ in diagram \eqref{intcohladn}.
\end{proof}

To conclude the calculations, consider the commutative square 
\[
\begin{CD}
H^*(B)\!\!@>>>\!H^*(B_n)\\[-.1em]
\;@A{\pi^*}AA\!\!@AA{\pi^*}A\\[-.1em]
H^*(\spt)\!@>>>\!H^*(\sptn)
\end{CD}
\]
induced by restriction, for any $n\geq 1$. The homomorphisms 
$\pi^*$ are monic, because they induce isomorphisms over $\bZ[1/2]$. 
The upper restriction is epic by Theorem \ref{intcohbn}, and the lower 
by Corollary \ref{sptntf}; it is convenient to denote their kernels by
$K_n^*$ and $L_n^*$ respectively. 
\begin{thm}\label{intcohsptn}
For any $n\geq 1$, there are isomorphisms
\begin{align*}
H^*((\sptn)_{\ssp})&\;\cong\;
\bZ\opfm[\om h^p\!/2^{p-1}\!,\,g^qh^s\!/2^s\!,\,u_{i,j}]\,/\,J_n\\
&\;\cong\;H^*(\sptssp)/
(r_t,\,u_{i,t}:t>n)
\end{align*}
of graded rings, where $p,\om q\geq 1$, $s\geq 0$ and $0<i<jd\2/2$; 
the ideal $J_n$ is given by
\[
(2u_{i,j},\,u_{i,j}u_{k,l},\,u_{i,j}h^p\!/2^{p-1}\!,\,
u_{i,j}g^qh^s\!/2^s\!,\, r_t,\,u_{i,t}:t>n)\,,
\]
and the polynomials $r_t(g,h)$ by \eqref{ffdefr}.
\end{thm}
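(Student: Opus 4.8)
The plan is to deduce the second isomorphism (the quotient description) first; the presentation by generators and relations then follows because $J_n = I + (r_t,\,u_{i,t}:t>n)$, where $I$ is the ideal of Theorem~\ref{intcohspt} and each $r_t$ is read as the polynomial $r_t(g,h)$. Concretely, I would show that the restriction homomorphism $\rho_n\colon H^*(\sptssp)\to H^*((\sptn)_{\ssp})$ is surjective with kernel $L_n^*=(r_t(g,h),\,u_{i,t}:t>n)$. This ideal splits as $R_n^{ev}\oplus\zmt\om\br{u_{i,t}:t>n}$, where $R_n^{ev}\letbe(r_t:t>n)\cdot H^{ev}(\spt)$: indeed, by Theorem~\ref{intcohspt} every class $u_{i,j}$ annihilates all positive-dimensional classes in $H^*(\spt)$, so $r_t(g,h)\om u_{i,j}=0$ and the $u$-part of the ideal reduces to the indicated subgroup. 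I would then treat the odd and even summands of $H^*((\sptn)_{\ssp})$ separately, using the commutative ladder \eqref{intcohladn} and the fact that $\pi^*$ is monic.

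\emph{Odd part.} Since $H^{od}(\RKn)=0=H^{od}(\KPn)$, the bottom row of \eqref{intcohladn} together with \eqref{intisostrunc} identifies $H^{od}((\sptn)_{\ssp})$ first with $H^{od}(\Th(\theta_n))\cong\Cok(i_n^*)$ and then with the quotient of the latter by the classes $\delta_k$, $1\leq k\leq n$, coming from $H^{ev}(\KPn)$. The cokernel $\Cok(i_n^*)$ is computed exactly as in Lemma~\ref{cokp3}, the only change being that $z^{n+1}=0$: it is the $\zmt$-module on the monomials $c^iz^j$ with $0\leq i<jd/2$ and $1\leq j\leq n$. Running the arguments of Corollary~\ref{preuij} and Proposition~\ref{odcohspt} then gives $H^{od}((\sptn)_{\ssp})\cong\zmt\om\br{u_{i,j}:0<i<jd/2,\ 1\leq j\leq n}$, with $u_{i,j}=b_\varDelta^*(c^i\delta_j)$ as in \eqref{defuij}. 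Because $c^i\delta_j$ already vanishes in $\Cok(i_n^*)\cong H^{od}(\Th(\theta_n))$ for $j>n$, naturality of the cofibre sequences shows that $\rho_n$ carries $u_{i,j}$ to its namesake when $j\leq n$ and to $0$ when $j>n$; hence $L_n^{od}=(u_{i,t}:t>n)$ and the odd half of the quotient description holds.

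\emph{Even part.} By Corollary~\ref{sptntf}, $H^{ev}((\sptn)_{\ssp})$ is free abelian, with the stated monomial basis in dimensions $>nd$ and agreeing with $H^{ev}(\sptssp)$ below, so $\rho_n$ is surjective in even degrees. The inclusion $R_n^{ev}\subseteq L_n^{ev}$ is immediate: by Theorem~\ref{intcohbn} and Remarks~\ref{psprops}, $r_t(m,y)$ lies in $(r_{n+1},r_{n+2})$ and so restricts to $0$ in $H^{ev}((B_n)_{\ssp})$ for $t>n$; as $\pi^*$ is monic and the square relating $\rho_n$ to $H^*(B_{\ssp})\to H^*((B_n)_{\ssp})$ commutes, $r_t(g,h)$ restricts to $0$ in $H^{ev}((\sptn)_{\ssp})$. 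For the reverse inclusion I would argue one dimension at a time: in dimension $td$ with $t>n$, the power-sum identities of Remarks~\ref{psprops}--\ref{rkinsp}, in particular \eqref{psjk}, let one rewrite every basis monomial of $H^{td}(\sptssp)$, modulo $R_n^{ev}$, as an integral combination of the ``surviving'' monomials $g^qh^s/2^s$ ($q\geq1$, $q+s\leq n<q+2s$) and $h^p/2^{p-1}$ ($n/2<p\leq n$). Thus $H^{td}(\sptssp)/R_n^{ev}$ is generated by at most as many elements as the rank of $H^{td}((\sptn)_{\ssp})$, while surjecting onto that free abelian group; a finitely generated abelian group that can be generated by $k$ elements and surjects onto $\bZ^k$ is free of rank $k$ with the surjection an isomorphism, so $L_n^{ev}=R_n^{ev}$ in dimension $td$ (and both vanish in dimensions $\leq nd$). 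Alternatively, one may identify $L_n^{ev}$ with $A^*\cap(r_{n+1},r_{n+2},y^{n+1})$ inside $H^{ev}(B_{\ssp})$, where $A^*=\pi^*H^{ev}(\spt)$, and check directly — using $2c=cm=0$ and \eqref{psjk} — that this intersection is the $A^*$-ideal generated by the $r_t$ with $t>n$.

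Combining the two halves gives $L_n^*=(r_t(g,h),\,u_{i,t}:t>n)$, hence $H^*((\sptn)_{\ssp})\cong H^*(\sptssp)/(r_t,\,u_{i,t}:t>n)$; substituting the presentation of Theorem~\ref{intcohspt} and collecting relations produces $\bZ\opfm[\om h^p\!/2^{p-1}\!,\,g^qh^s\!/2^s\!,\,u_{i,j}]/J_n$. The step I expect to be the real obstacle is the combinatorial reduction in the even part: one must control the power-sum polynomials $r_t(g,h)$ carefully enough to be sure the relations for $t>n$ trim $H^{ev}(\sptssp)$ down to \emph{precisely} the Corollary~\ref{sptntf} basis, neither killing too much nor too little in any one dimension. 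The case $n=1$ is genuinely degenerate — there $r_2(g,h)=g^2-h$ already forces $h=g^2$ — and is most cheaply dispatched by hand or by comparison with the known ring $H^*(\spt(S^d))$.
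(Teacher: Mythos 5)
Your overall strategy is the right one, and your ``alternative'' (b) for the even part is essentially the paper's proof. The odd part matches the paper: truncating $H^*(\KPI)$ by $(z^{n+1})$ kills exactly the $\delta_j$ with $j>n$, and tracing $b^*_\varDelta$ gives $H^{od}(\sptn)\cong\zmt\br{u_{i,j}:0<i<jd/2,\ j\leq n}$.

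For the even part, the paper proceeds exactly as in your second option: it identifies $\pi^*(L_n^{ev})$ with $A^{ev}\cap K_n^{ev}$ (with $A^*=\Ima\pi^*$ and $K_n^*=(r_{n+1},r_{n+2},y^{n+1})$ from Theorem~\ref{intcohbn}), writes a general element as $v_1r_{n+1}+v_2r_{n+2}+v_3y^{n+1}$ with $v_j=v_j(m,y)$ -- justified because the $c$-multiples $cr_t$ vanish and $cy^{n+1}$ contributes only $2$-torsion, which cannot appear in the torsion-free image of $\pi^*$ -- and then uses Remarks~\ref{rkinsp} to absorb first $v_1r_{n+1}+v_2r_{n+2}$ and then $v_3y^{n+1}$ into $\pi^*(R_n^{ev})$. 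Your primary line (a), the dimension-by-dimension rank count (generate by at most $k$ elements, surject onto $\bZ^k$, conclude an isomorphism), is logically sound but shoulders precisely the combinatorial burden you flag: one must verify, with care about the $2$-power denominators in the monomial basis of Theorem~\ref{intcohspt}, that the $r_t$ relations for $t>n$ allow every basis monomial to be rewritten integrally in terms of the surviving Corollary~\ref{sptntf} monomials, without over- or under-counting. The paper's route avoids this by working with the explicit generators of $K_n^{ev}$ and the divisibility facts in Remarks~\ref{psprops} and~\ref{rkinsp}, which is why it is cleaner. Two small cautions: (i) $L_n^{ev}$ lives in $H^{ev}(\spt)$, so ``$A^*\cap(r_{n+1},r_{n+2},y^{n+1})$'' should be read as its image under the monic $\pi^*$; (ii) your splitting $L_n^*=R_n^{ev}\oplus\zmt\br{u_{i,t}:t>n}$ is correct but worth spelling out -- it uses both that $u_{i,j}$ annihilates all positive-degree classes (so the $u$-part of the ideal is only additive) and that the two summands sit in complementary parities.
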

\begin{proof}
In odd dimensions, Proposition \ref{odcohspt} leads to an isomorphism
\[
H^{od}(\sptn)\;\cong\;\zmt\br{u_{i,j}:0<i<jd/2,\,j\leq n}
\]
by truncating $H^*(\KPI)$. It therefore remains to check that 
$L^{ev}_n$ and $R^{ev}_n$ (as introduced in Remarks \ref{rkinsp}) 
coincide in $H^{ev}(\spt)$.

By Theorem \ref{intcohbn} and Remark \ref{psprops}, $r_t$ lies in 
$K_n^{ev}$ for every $t>n$. So by Remarks \ref{rkinsp}, 
$r_t(g,h)$ lies in $L_n^{ev}$, and $R^{ev}_n\subseteq L^{ev}_n$. 
For the reverse inclusion, consider $r$ in $L_n^{kd}$ for any $k\geq 1$; 
then $\pi^*(r)$ lies in both $K_n^{kd}$ and $\Ima\pi^*$. Since 
$K_n^*=(r_{n+1},\om r_{n+2},\om y^{n+1})$, an expression of the form
\[
\pi^*(r)\;=\;v_1 r_{n+1}+v_2\om r_{n+2}+v_3\om y^{n+1}
\]
must hold for some polynomials $v_j=v_j(m,y)$ in $Z^*$, where 
$j=1$, $2$, or $3$. But $v_1 r_{n+1}+v_2 r_{n+2}$ lies in 
$\pi^*(R^{kd}_n)$ by Remarks \ref{rkinsp}, so $v_3y^{n+1}$ lies in 
$\Ima\pi^*$ as well. Thus $v_3y^{n+1}\equiv 2\lambda y^{kd/2}$
or $0$ mod $(m)$ for even or odd $k$ respectively, and therefore lies 
in $\pi^*(R^{kd}_n)$, by Remarks \ref{rkinsp}(2) and (3). So $\pi^*(r)$ 
lies in $\pi^*(R^{kd}_n)$, and $r$ lies in $R_n^{kd}$; hence 
$L^{ev}_n\subseteq R^{ev}_n$, as sought.
\end{proof}

Combining \eqref{ffdefr} and Remarks \ref{rkinsp} gives
$r_{2k}(g,h)\equiv(-1)^kh^k/2^{k-1}$ mod $(g)$ and
$r_{2k+1}(g,h)\equiv (-1)^k(2k\?+\?1)gh^k/2^k$ mod $(g^2)$, for $k\geq
0$.  Corollary \ref{bntrunc} identifies $H^*(\sptn)$ as
$\bZ\br{h^n/2^{n-1}}$ in dimension $2nd$, and $0$ above.

%
%
%
%
%
%
%
%
%

\setcounter{equation}{0}
\renewcommand{\theequation}{A.\arabic{equation}}

%
%
%
%
%
%
%
%
%

\section*{Appendix A}\label{ap}

In these tables, the notation is that of \eqref{isobst}, \eqref{cohthsp}, 
\eqref{defuij} and \eqref{defgh}. The additive generators exhibit the 
multiplicative structure in each dimension, and monomials in $g$ and
$h$ have infinite order; all others have order $2$. 

\begin{exa}\label{infmod2}
By Theorem \ref{cohsptmod2},
$H^*(\spt(\CPI);\zmt)$ is given by 
\begin{equation*}\label{cplxinfzmt}
\begin{tabular}{|c|c|c|c|c|c|c|c|c|}
\hline
\rule{0em}{.9em}0&2&4&6&7&8&9\\
\hline
\rule{0em}{1.1em}$1$&$t$&$a^2\om t,\om m\om t$&
$a^4\om t,\om y\om t,\om m^2\om t$&$a^5\om t$&
$a^6\om t,\om a^2yt,\om my\om t,\om m^3t$&$\om a^3y\om t$\\
\hline 
\end{tabular}
\end{equation*}
\rule[-.5em]{0em}{1.5em} 
in dimensions $\leq 9$, and $H^*(\spt(\HPI);\zmt)$ is given by 
\begin{equation*}\label{quatinfzmt}
\begin{tabular}{|c|c|c|c|c|c|c|c|c|}
\hline
\rule{0em}{.9em}0&4&6&7&8&10&11&12&13\\
\hline
\rule{0em}{1.1em}$1$&$t$&$a^2\om t$&$a^3\om t$&$a^4\om t,\om m\om t$&
$a^6\om t$&$a^7\om t$&$a^8\om t,\om y\om t,\om m^2\om t$&$a^9\om t$\\
\hline 
\end{tabular}
\end{equation*}
\[
\begin{tabular}{|c|c|c|c|c|}
\hline
\rule{0em}{.9em}14&15&16&17&18\\
\hline
\rule{0em}{1.1em}$a^{10}\om t,\om a^2y\om t$&
$a^{11}\om t,\om a^3y\om t$&
$a^{12}\om t,\om a^4y\om t,\om my\om t,\om m^3\om t$&$a^5y\om t$&
$a^{14}\om t,\om a^6y\om t$\\
\hline 
\end{tabular}\,.
\]
\rule{0em}{1.1em}\noindent 
in dimensions $\leq 18$
\end{exa}

\begin{exa}\label{infint}
By Theorem \ref{intcohspt}, $H^*(\spt(\CPI))$ is given by 
 \rule[-.5em]{0em}{.5em}
\[
\begin{tabular}{|c|c|c|c|c|c|c|c|c|}
\hline
\rule{0em}{.9em}0&2&4&6&7&8&9&10&11\\
\hline
\rule{0em}{1.1em}$1$&$g$&$g^2,\om h$&
$g^3,\om gh\?/\2 2$&
$u_{1,2}$&$g^4,\om g^2h\?/\2 2,\om h^2\?/\2 2$&
$u_{1,3}$&$g^5,\om g^3h\?/\2 2,\om gh^2\?/\2 4$
&$u_{2,3},\om u_{1,4}$\\
\hline 
\end{tabular}
\]
\rule[-.5em]{0em}{1.5em}in dimensions $\leq 11$, and
$H^*(\spt(\HPI))$ is given by
\[
\begin{tabular}{|c|c|c|c|c|c|c|c|c|c|}
\hline
\rule{0em}{.9em}0&4&7&8&11&12&13&15&16&17\\
\hline
\rule{0em}{1.1em}$1$&$g$&$u_{1,1}$&$g^2,\om h$&$u_{1,2}$&
$g^3,\om gh\?/\2 2$&$u_{2,2}$&$u_{3,2},\om u_{1,3}$&
$g^4,\om g^2h\?/\2 2,\om h^2\?/\2 2$&$u_{2,3}$\\
\hline 
\end{tabular}
\]
\[
\begin{tabular}{|c|c|c|c|c|}
\hline
\rule{0em}{.9em}19&20&21&23&24\\
\hline
\rule{0em}{1.1em}$u_{3,3},\om u_{1,4}$&
$g^5,\om g^3h\?/\2 2,\om gh^2\?/\2 4$&$u_{4,3},\om u_{2,4}$&
$u_{5,3},\om u_{3,4},\om u_{1,5}$&
$g^6,\om g^4h\?/\2 2,\om g^2h^2\?/\2 4,\om h^3\?/\2 4$\\
\hline 
\end{tabular}
\]
\rule{0em}{1.3em}
 in dimensions $\leq 24$. 
\end{exa}

\begin{exa}\label{finmod2}
By Theorem \ref{cohsptnmod2}, $H^*(\spt(\CPT);\zmt)$ is given by 
\begin{equation*}\label{cplxlowdimzmt}
\begin{tabular}{|c|c|c|c|c|c|c|c|}
\hline
\rule{0em}{.9em}0&2&4&6&7&8\\
\hline
\rule{0em}{1.1em}$1$&$t$&$\!a^2\om t,\om m\om t\!$&
$\!a^4\om t,\om m^2\om t\!$&$a^5\om t$&
$\!a^6\om t=m^3\om t\!$\\
\hline 
\end{tabular}
\end{equation*}
\rule[-.5em]{0em}{1.5em} 
and $H^*(\spt(\HPT);\zmt)$ is given by
\begin{equation*}\label{quatlowdimzmt}
\begin{tabular}{|c|c|c|c|c|c|c|c|c|c|c|c|}
\hline
\rule{0em}{.9em}0&4&6&7&8&10&11&12&13&14&15&16\\
\hline
\rule{0em}{1.1em}$1$&$t$&$a^2\om t$&$a^3\om t$&$\!a^4\om t,\om m\om t\!$&
$a^6\om t$&$a^7\om t$&$\!a^8\om t,\om m^2\om t\!$&$a^9\om t$&$a^{10}\om t$&
$a^{11}\om t$&$\!a^{12}\om t=m^3\om t\!$\\
\hline 
\end{tabular}\,,
\end{equation*}
\rule{0em}{1.1em}\noindent 
where $t^2=mt$ in both cases.
\end{exa} 

\begin{exa}\label{finint}
By Theorem \ref{intcohsptn}, $H^*(\spt(\CPH))$ \mbox{is given by} 
\[
\begin{tabular}{|c|c|c|c|c|c|c|c|c|c|}
\hline
\rule{0em}{.9em}0&2&4&6&7&8&9&10&11&12\\
\hline
\rule{0em}{1.1em}$1$&$g$&$g^2,\om h$&
$g^3,\om gh\?/\2 2$&
$u_{1,2}$&$g^2h\?/\2 2,\om h^2\?/\2 2$&
$u_{1,3}$&$gh^2\?/\2 4$&$u_{2,3}$&$h^3\?/\2 4$\\
\hline 
\end{tabular}
\]
\rule[-.5em]{0em}{1.5em} 
and $H^*(SP^2(\HPH))$ is given by
\[
\begin{tabular}{|c|c|c|c|c|c|c|c|c|c|}
\hline
\rule{0em}{.9em}0&4&7&8&11&12&13&15&16&17\\
\hline
\rule{0em}{1.1em}$1$&$g$&$u_{1,1}$&$g^2,\om h$&$u_{1,2}$&
$g^3,\om gh\?/\2 2$&$u_{2,2}$&$u_{3,2},\om u_{1,3}$&
$g^2h\?/\2 2,\om h^2\?/\2 2$&$u_{2,3}$\\
\hline 
\end{tabular}
\]
\[
\begin{tabular}{|c|c|c|c|c|}
\hline
\rule{0em}{.9em}19&20&21&23&24\\
\hline
\rule{0em}{1.1em}$u_{3,3}$&$gh^2\?/\2 4$&$u_{4,3}$&
$u_{5,3}$&$h^3\?/\2 4$\\
\hline 
\end{tabular}\,,
\]
\rule{0em}{1.3em}
with relations $g^4=4g^2h/2-h^2\!/2$ and $g^3h/2=3gh^2\!/4$ in both 
cases.
\end{exa}

\end{document}